\newtheorem{theorem}{Theorem}[section]
\newtheorem{remark}[theorem]{Remark}
\newtheorem{proposition}[theorem]{Proposition}
\newcommand{\RNum}[1]{\uppercase\expandafter{\romannumeral #1\relax}}
\numberwithin{equation}{section}
\DeclareRobustCommand{\VAN}[3]{#2}
\let\VANthebibliography\thebibliography
\def\thebibliography{\DeclareRobustCommand{\VAN}[3]{##3}\VANthebibliography}
\title[DG methods for the Euler--Poisson equations]{Energy conserving and well-balanced discontinuous Galerkin methods for the Euler--Poisson equations in spherical symmetry$^{\star}$}
\author[W. Zhang, Y. Xing, E. Endeve]{Weijie Zhang$^{1}$\thanks{Contact e-mail: \href{zhwj@mail.ustc.edu.cn}{zhwj@mail.ustc.edu.cn}}%
	, Yulong Xing$^{2}$\thanks{Contact e-mail: \href{xing.205@osu.edu}{xing.205@osu.edu}}%
	, Eirik Endeve$^{3,4}$\thanks{Contact e-mail: \href{endevee@ornl.gov}{endevee@ornl.gov}}%
	\\
	$^{1}$School of Mathematical Sciences, University
	of Science and Technology of China, Hefei, Anhui 230026, P.R. China
	\\
	$^{2}$Department of Mathematics, The Ohio State University, Columbus, OH 43210, USA
	\\
	$^{3}$Multiscale Methods and Dynamics Group, Oak Ridge National Laboratory, Oak Ridge, TN 37831, USA
	\\
	$^{4}$Department of Physics and Astronomy, University of Tennessee Knoxville, TN 37996, USA
	\\~\\
	$\star$This manuscript has been authored in part by UT-Battelle, LLC, under contract DE-AC05-00OR22725 with the US Department of Energy (DOE). \\
	The US government retains and the publisher, by accepting the article for publication, acknowledges that the US government retains a nonexclusive, \\
	paid-up, irrevocable, worldwide license to publish or reproduce the published form of this manuscript, or allow others to do so, for US government \\
	purposes. DOE will provide public access to these results of federally sponsored research in accordance with the \\
	DOE Public Access Plan (http://energy.gov/downloads/doe-public-access-plan).}
\date{Last updated xxx; in original form xxx}
\begin{document}
	\label{firstpage}
	\pagerange{\pageref{firstpage}--\pageref{lastpage}}
	\maketitle
	
	\begin{abstract}
		This paper presents high-order Runge--Kutta (RK) discontinuous Galerkin methods for the Euler--Poisson equations in spherical symmetry. The scheme can preserve a general polytropic equilibrium state and achieve total energy conservation up to machine precision with carefully designed spatial and temporal discretizations. To achieve the well-balanced property, the numerical solutions are decomposed into equilibrium and fluctuation components which are treated differently in the source term approximation. One non-trivial challenge encountered in the procedure is the complexity of the equilibrium state, which is governed by the Lane--Emden equation. For total energy conservation, we present second- and third-order RK time discretization, where different source term approximations are introduced in each stage of the RK method to ensure the conservation of total energy. A carefully designed slope limiter for spherical symmetry is also introduced to eliminate oscillations near discontinuities while maintaining the well-balanced and total-energy-conserving properties. Extensive numerical examples --- including a toy model of stellar core-collapse with a phenomenological equation of state that results in core-bounce and shock formation --- are provided to demonstrate the desired properties of the proposed methods, including the well-balanced property, high-order accuracy, shock capturing capability, and total energy conservation.
	\end{abstract}
	
	\begin{keywords}
	methods: numerical, supernovae: general, shock waves, gravitation, hydrodynamics 
	\end{keywords}
	
	
	
	
	\section{Introduction}
\label{sec:Introduction}
In this paper, we present high-order discontinuous Galerkin (DG) methods for the Euler--Poisson equations in spherical symmetry, which have the well-balanced property to preserve hydrostatic equilibrium states exactly and total energy conservation property at the same time. 

The Euler equations with gravitation have wide applications in geophysical and astrophysical flow problems. In the case of a time-dependent gravitational potential, the model can be coupled with the Poisson equation to represent the self-gravity, which leads to the Euler--Poisson equations. They play an important role in many geophysical and astrophysical flows, for example, core-collapse supernova explosions \cite{mullerSteinmetz1995,couch2013improved,muller2020review}, star formation \cite{Ostriker_2001,doi:10.1146/annurev.astro.45.051806.110602}, planet formation \cite{armitage2011dynamics,simon2016mass}, and plasma physics applications \cite{guo1998smooth,suzuki2011asymptotic}. Self-gravitating astrophysical dynamics are often physically complex, and numerical methods are usually employed to simulate such complicated systems. 

The Euler equations with gravitation belong to the family of hyperbolic conservation laws with source terms. One of the most important features of such systems is that they admit non-trivial time-independent steady state solutions. Well-balanced schemes are introduced to preserve such steady states exactly on the discrete level and shown to be efficient and accurate for capturing small perturbations to such steady states. These perturbations may be at the level of the truncation error of standard numerical schemes and can be hard to capture with relatively coarse meshes. The well-balanced methods have been widely studied in the context of the shallow water equations over a non-flat bottom topology, see e.g., \cite{bermudez1994upwind,leveque1998balancing,audusse2004fast,XS2005,NXS2007,gallardo2007well,xing2010positivity}. In recent years, well-balanced methods for the Euler equations with static gravity have attracted much attention and have been developed within several different frameworks; see e.g., \cite{xu2010well,kappeli2014well,chandrashekar2015second,kappeli2016well,thomann2019second} for first- and second-order schemes, and \cite{xing2013high,li2016high,ghosh2016well,LX2016,chandrashekar2017well,klingenberg2019arbitrary,veiga2019capturing,grosheintz2019high,castro2020well} for high-order schemes. Some of these works assume that the desired equilibrium is explicitly known \cite{klingenberg2019arbitrary,wu2021uniformly}, while others only need a pre-description of the desired equilibrium \cite{li2018well}, and work for a class of equilibria. Recently, several works are established without any information of the desired equilibrium state \cite{kappeli2016well,franck2016finite,CiCP-30-666}. For the Euler--Poisson equations considered in this paper, the equilibrium states are more complicated due to the coupling with the Poisson equation. 

For the Euler--Poisson equations, another important feature is that they conserve the total energy, which is defined as the sum of the potential, internal, and kinetic energies.  In the standard formulation of the Euler--Poisson equations, the effect of gravity is included as source terms, and the total energy conservation statement is obtained in a non-trivial way.  Thus, conserving the total energy numerically becomes challenging. For some systems, e.g., in hydrostatic equilibrium, the total energy can be much smaller than either the potential or internal energies, which means that even a small truncation error in standard methods for the potential energy can lead to a large error in the total energy, and eventually the wrong numerical solution \citep{jiang2011star}.  
Fully conservative schemes for the Euler--Poisson equations, which conserve mass, momentum, and total energy, have been studied under the framework of finite difference methods in the last fifteen years. One popular technique is to transfer the energy equation to the equation for total energy and rewrite the governing equations in conservative form, see e.g., \cite{jiang2013new}. Another popular technique does not involve the reformulation of the unknown variables, but apply integration by parts and the mass conservation equation to discretize the source term in the energy equation, see e.g., \cite{mikami2008three,hanawa2019conservation,mullen2021extension}. With a careful approximation of the source term in the energy equation, one can carry out a rigorous proof to show the conservation of total energy. In this paper, we adopt the second technique and study it in the framework of high-order finite element DG methods. 
We note that we solve the Euler--Poisson equations in spherical symmetry, where we are unable to formulate the momentum equation in conservative form.  
For this reason we do not consider momentum conservation in this paper \citep[cf.][]{jiang2013new,mullen2021extension}.

The main objective of this paper is to develop high-order DG methods for the Euler--Poisson equations, which are well-balanced and at the same time have the total energy conservation property. The well-balanced DG scheme for the Euler equations with a time-independent gravitational potential was studied in \cite{li2018well}, where the key component to achieve the well-balanced property is to decompose the source into equilibrium and fluctuation components and treat them differently in the source term approximation. Here we consider the extension of this technique to the Euler--Poisson equations. One non-trivial difficulty encountered in the procedure is the complexity of the equilibrium state, which is now governed by the well known Lane--Emden equation. For total energy conservation, very recent work presented in \cite{mullen2021extension}, where a second-order finite difference, fully conservative scheme was proposed and studied. Here, the extension to the framework of DG methods is studied, which involves a special integration by parts and novel second- and third-order Runge--Kutta (RK) time discretization, where different source term approximations are introduced in each 
stage of RK method to ensure the conservation of total energy. A carefully designed slope limiter in spherical symmetry is also introduced to eliminate oscillations near discontinuities while still maintaining the well-balanced and total-energy-conserving properties. To the best of our knowledge, the design of well-balanced methods for the Euler--Poisson system has not been studied in the 
context of DG methods, and there are no existing Runge--Kutta discontinuous Galerkin (RKDG) schemes which can conserve the total energy for the Euler--Poisson equations. This is the first paper trying to tackle both challenges simultaneously. 

The main motivating astrophysical application for the present work is the simulation of core-collapse supernovae (CCSNe) in the context of non-relativistic, self-gravitating hydrodynamics with DG methods \citep[see also][]{pochik2021thornado}.  
After the collapse of the iron core of a massive star, the inner core settles into an approximate hydrostatic equilibrium, which is not easily captured by standard numerical methods, unless relatively high spatial resolution is used \citep{kappeli2016well}.  
Moreover, conserving the total energy in CCSN simulations with standard numerical methods and moderate spatial resolution is challenging \citep[e.g.,][]{muller2010new}.  
The kinetic energy of the explosion is a key quantity of interest targeted by CCSN simulation codes, and is typically on the order of $10^{51}$~erg \citep[or less; e.g.,][]{lentz_etal_2015,melson_etal_2015,burrows_etal_2020}.  
Thus, for reliable estimates of the explosion energy, the total energy should be conserved to well within this threshold.  
The use of high-order, well-balanced, and energy conserving numerical methods, as developed in this paper, may help provide reliable estimates for quantities of interest from CCSN simulations at a reduced computational cost.  

The rest of the paper is organized as follows. In Section \ref{sec:model}, we introduce the Euler--Poisson equations, their steady-state solutions, and discuss total energy conservation. In Section \ref{sec:methods}, we present the structure-preserving numerical methods for the Euler--Poisson equations. We start by introducing the conventional DG methods for the Euler--Poisson equations, and then discuss the well-balanced modifications and total-energy-conserving source term and time discretization, which leads to our well-balanced and total-energy-conserving fully discrete RKDG scheme. In Section \ref{sec:example}, numerical examples are given to verify the properties of our proposed methods. Concluding remarks are provided in Section \ref{sec:conclusion}.

\section{Mathematical model}\label{sec:model}

In this section, we introduce the Euler equations with self-gravity in spherical symmetry, and discuss the steady-state solutions and total energy conservation property of the model.

\subsection{Euler--Poisson equations}

The Euler equations in spherical symmetry take the form
\begin{align}
	&\frac{\partial \rho}{\partial t} +\frac{1}{r^{2}}\frac{\partial}{\partial r}\Big(\,r^{2}\,\rho u\,\Big)
	=0,\label{eq:mass}\\
	&\frac{\partial \rho u}{\partial t}
	+\frac{1}{r^{2}}\frac{\partial}{\partial r}\Big(\,r^{2}\,\big(\,\rho u^{2}+p\,\big)\,\Big)
	=\frac{2\,p}{r}-\rho\,\frac{\partial \Phi}{\partial r},\label{eq:momentum}\\
	&\frac{\partial E}{\partial t}
	+\frac{1}{r^{2}}\frac{\partial}{\partial r}\Big(\,r^{2}\,\big(\,E+p\,\big)\,u\,\Big)
	=-\rho u\,\frac{\partial \Phi}{\partial r},\label{eq:energy}
\end{align}
where $r$ is the radial coordinate, $\rho$ is the mass density, $u$ denotes the fluid velocity, $p$ is the pressure, and $E=\rho e+\frac{1}{2}\,\rho\,u^{2}$ is the total non-gravitational energy with $e$ being the specific internal energy. 
An additional thermodynamic equation to link $p$ with $(\rho,e)$, called the equation of state (EoS), is needed. For ideal gases, it is given by
\begin{equation}\label{eq:eos}
	p=(\gamma-1)\,\rho e,
\end{equation}
where $\gamma$ is the (constant) ratio of specific heats. The gravitational potential $\Phi$ can be obtained from the density $\rho$ via the Poisson equation
\begin{equation}
	\frac{1}{r^{2}}\frac{\partial}{\partial r}\Big(\,r^{2}\,\frac{\partial \Phi}{\partial r}\,\Big)=4\pi\,G\,\rho,
	\label{eq:poisson}
\end{equation}
where $G$ is the gravitational constant. 
The coupling of these two models yield the Euler--Poisson equations in spherical symmetry.

\subsection{Steady states and the Lane--Emden equation}\label{sec:lane-emden}

The Euler equations \eqref{eq:mass}-\eqref{eq:energy} admit the following zero-velocity steady states:
\begin{equation}
	\rho=\rho(r),\qquad u=0,\qquad \frac{\partial p}{\partial r}=-\rho\frac{\partial\Phi}{\partial r}.
	\label{eq:equilibrium}
\end{equation}
Considering the polytropic hydrostatic equilibrium characterized by
\begin{equation}
	p=\kappa\rho^\gamma,
	\label{eq:polytropic}
\end{equation}
we can combine \eqref{eq:poisson}, \eqref{eq:equilibrium} and \eqref{eq:polytropic} to obtain the steady-state equation
\begin{equation}
	\frac{1}{r^2}\frac{\partial}{\partial r}\left(\frac{r^2}{\rho}\kappa\gamma\rho^{\gamma-1}\frac{\partial\rho}{\partial r}\right)=-4\pi \,G\,\rho,
	\label{eq:lane-emden1}
\end{equation}
which is the equation satisfied by $\rho(r)$. By introducing the quantities $\theta$ and $n$ defined by 
\begin{align}
	\rho&\equiv\lambda\theta^n,\qquad
	\gamma\equiv\frac{n+1}{n},
\end{align}
with $\lambda\equiv\rho_c$ being the value of density $\rho$ at the center $r=0$, the equation \eqref{eq:lane-emden1} can be simplified as
\begin{equation}
	\frac{(n+1)\kappa\lambda^{\frac{1-n}{n}}}{4\pi\,G}\frac{1}{r^2}\frac{\partial}{\partial r} \left(r^2\frac{\partial\theta}{\partial r} \right)=-\theta^n.
\end{equation}	
Let us define the scaled radial coordinate $\xi$ as
\begin{align}\label{eq:lane-emden-coef}
	\xi\equiv&\frac{r}{\alpha},\qquad
	\alpha\equiv\sqrt{\frac{(n+1)\kappa\lambda^{\frac{1-n}{n}}}{4\pi\,G}},
\end{align}
and this equation can be non-dimensionalized into the well-known Lane--Emden equation for the polytropic hydrostatic equilibrium:
\begin{equation}
	\frac{1}{\xi^2}\frac{\partial}{\partial\xi}\left(\xi^2\frac{\partial \theta}{\partial\xi}\right)=-\theta^n.
	\label{eq:lane-emden2}
\end{equation}
As a second-order ordinary differential equation for $\theta(\xi)$, it requires two boundary conditions:
\begin{enumerate}
	\item Since $\lambda\equiv\rho_c=\left. \rho\right|_{\xi=0}$ and $\rho=\lambda\theta^n$, we have $\left. \theta\right|_{\xi=0}=1$ at the center $\xi=0$;
	\item The polytropic equilibrium \eqref{eq:polytropic} leads to 
	\begin{equation}\label{eq:pderivative}
		\frac{\partial p}{\partial r}=\kappa\gamma\rho^{\gamma-1}\frac{\partial\rho}{\partial r} ~~\propto ~~\frac{\partial\theta}{\partial\xi}.
	\end{equation}	
	We have ${\partial p}/{\partial r}=-\rho~ {\partial\Phi}/{\partial r}=0$ at $r=0$ (because there is no mass inside zero radius). 
	Therefore, we conclude that 
	\begin{equation}\label{eq:lane-emden-boundary2}
		\left. \frac{\partial\theta}{\partial\xi}\right|_{\xi=0}=0.
	\end{equation}
\end{enumerate}

\begin{remark}
	The methods presented in this paper are to preserve the steady state \eqref{eq:polytropic} for the ideal EoS \eqref{eq:eos} up to round-off errors, but can deal with problems for general EoS without preserving the steady states up to machine error.
\end{remark}

\subsection{Total energy conservation}\label{sec2.3}

The solutions of the Euler--Poisson system \eqref{eq:mass}-\eqref{eq:poisson} satisfy the following conservation law for the total energy:
\begin{equation}\label{eq:total-energy-continuous}
	\frac{\partial}{\partial t}\left(E+\frac12\,\rho\,\Phi\right)+\frac{1}{r^{2}}\frac{\partial}{\partial r}\Big(\,r^{2}\left(\big(\,E+p\,\big)\,u+F_g\right)\Big)=0,
\end{equation}
where
\begin{equation}\label{eq:consrve_energy_flux}
	F_g=\frac{1}{8\pi\,G}\left(\Phi\,\frac{\partial^2}{\partial r\partial t}\Phi-\frac{\partial}{\partial t}\Phi\,\frac{\partial}{\partial r}\Phi\right)+\rho u\,\Phi,
\end{equation}		
which leads to the total energy conservation
\begin{equation}\label{eq:total-energy}
	\frac{\partial}{\partial t}\int_{\Omega}\left(E+\frac12\,\rho\,\Phi\right)\,r^2\,\mathrm{d}r=0,
\end{equation}
if the boundary fluxes are zero. Here $\frac12\,\rho\,\Phi$ is the canonical gravitational energy density 
of a self-gravitating system. 

Below, we sketch the main derivation steps of \eqref{eq:total-energy-continuous}, which will be useful in the derivation of the total-energy-conserving numerical methods. Let us decompose the time derivative into two terms as
\begin{align}
	\frac{\partial}{\partial t}\left(E+\frac12\,\rho\,\Phi\right)r^2
	&=\left(\frac{\partial E}{\partial t}+\frac12\frac{\partial \rho}{\partial t}\Phi + \frac12\rho\,\frac{\partial \Phi}{\partial t}\right)r^2\nonumber
	\\
	&=\left(\frac{\partial E}{\partial t}+\frac{\partial \rho}{\partial t}\,\Phi\right)r^2+\frac12\left(\rho\,\frac{\partial \Phi}{\partial t}-\frac{\partial \rho}{\partial t}\,\Phi\right)r^2.
\end{align}
For the first term, we have
\begin{align}
	&\left(\frac{\partial E}{\partial t}+\frac{\partial \rho}{\partial t}\,\Phi\right)r^2\nonumber\\
	&=\left(-\frac{\partial}{\partial r}\Big(\,r^{2}\,\big(\,E+p\,\big)\,u\,\Big)
	-\rho u\,\frac{\partial \Phi}{\partial r}\,r^2-\frac{\partial}{\partial r}\Big(\,r^{2}\,\rho u\,\Big)\,\Phi\right)\nonumber\\
	&=\left(-\frac{\partial}{\partial r}\Big(\,r^{2}\,\big(\,E+p\,\big)\,u\,\Big)
	-\frac{\partial}{\partial r}\Big(\,r^{2}\,\rho u\,\Phi\Big)\right)\nonumber\\
	&=-\frac{\partial}{\partial r}\Big(\,r^{2}\left(\big(\,E+p\,\big)\,u+\rho u\,\Phi\,\Big)\right),
\end{align}
which follows from Eq. \eqref{eq:mass} and \eqref{eq:energy}. For the second term, we have
\begin{align}
	&\frac12\left(\rho\,\frac{\partial \Phi}{\partial t}-\frac{\partial \rho}{\partial t}\,\Phi\right)r^2\nonumber\\
	&=\frac{1}{8\pi\,G}\left(\frac{\partial}{\partial r}\Big(\,r^{2}\,\frac{\partial \Phi}{\partial r}\,\Big)\,\frac{\partial \Phi}{\partial t}-\frac{\partial}{\partial t}\frac{\partial}{\partial r}\Big(\,r^{2}\,\frac{\partial \Phi}{\partial r}\,\Big)\,\Phi\right)\nonumber\\
	&=\frac{1}{8\pi\,G}\left(\frac{\partial}{\partial r}\Big(\,r^{2}\,\frac{\partial \Phi}{\partial r}\frac{\partial \Phi}{\partial t}\Big)-r^{2}\,\frac{\partial \Phi}{\partial r}\frac{\partial^2\Phi}{\partial r\partial t}\right.\nonumber\\
	&\hskip1.2cm\left.-\frac{\partial}{\partial r}\Big(\,r^{2}\,\frac{\partial^2\Phi}{\partial r\partial t}\,\Phi\Big)+\frac{\partial}{\partial t}\Big(\,r^{2}\,\frac{\partial \Phi}{\partial r}\,\Big)\,\frac{\partial\Phi}{\partial r}\right)\nonumber\\
	&=\frac{1}{8\pi\,G}\left(\frac{\partial}{\partial r}\Big(\,r^{2}\,\frac{\partial \Phi}{\partial r}\frac{\partial \Phi}{\partial t}\Big)-\frac{\partial}{\partial r}\Big(\,r^{2}\,\frac{\partial^2\Phi}{\partial r\partial t}\,\Phi\Big)\right),
\end{align}
which follows from Eq. \eqref{eq:poisson} and integration by parts. The combination of these leads to the conservative form of the total energy \eqref{eq:total-energy-continuous}. 

\begin{remark}
    We note that the form of the energy flux in Eq. \eqref{eq:consrve_energy_flux} is not unique \citep{jiang2013new,mullen2021extension}. The different energy fluxes will not affect the numerical methods proposed in this paper, which will be derived based on the original form \eqref{eq:mass}-\eqref{eq:poisson}. The energy flux in Eq. \eqref{eq:consrve_energy_flux} is introduced only as a tool for the proof of the total energy conservation property.
\end{remark}

	\section{Numerical methods}\label{sec:methods}

In this section, we present the high-order, total-energy-conserving, and well-balanced DG scheme for the Euler--Poisson equations \eqref{eq:mass}-\eqref{eq:poisson}, which preserves the polytropic equilibrium \eqref{eq:lane-emden1}, and at the same time has the total energy conservation property  \eqref{eq:total-energy} on the discrete level.

\subsection{Notations}

Let us divide the computational domain $\Omega=\{r:r\in[0,R]\}$ into computational cells
\begin{equation}
K_j=\{r:r\in[r_{j-\frac{1}{2}},r_{j+\frac{1}{2}}]\}\quad\mbox{and}\quad\Delta r_j=r_{j+\frac{1}{2}}-r_{j-\frac{1}{2}},
\end{equation}
for $j=1,...,N$. We define the finite dimensional function space
\begin{equation}
	\mathcal{V}_h:=\{v\in L^2(\Omega):\,v|_{K_j}\in P^k(K_j),\,\forall\,1\le j\le N\},
\end{equation}
where $P^k$ denotes the polynomial space up to degree $k$, and let 
\begin{equation}
	\boldsymbol{\Pi}_h:=\{(\zeta,\psi,\delta)^T:\,\zeta,\psi,\delta\in\mathcal{V}_h\}.
\end{equation}
For any unknown variable $u$, we denote its numerical approximation in the DG method by $u_h$, which belongs to the piecewise polynomial space $\mathcal{V}_h$. For $\psi\in\mathcal{V}_h$, the limit values at the cell boundaries $r_{j+\frac{1}{2}}$ from the left and the right are defined by
\begin{equation}
	\psi_{j+\frac{1}{2}}^-:=\lim_{\epsilon\rightarrow 0^+}\psi(r_{j+\frac{1}{2}}-\epsilon),\quad \psi_{j+\frac{1}{2}}^+:=\lim_{\epsilon\rightarrow 0^+}\psi(r_{j+\frac{1}{2}}+\epsilon).
\end{equation}

We introduce the Gauss-Radau projection, to be used later in designing the well-balanced methods. For a function $u\in L^2(\Omega)$ and $k\ge 1$, we define its projection $Pu$ into the space $\mathcal{V}_h$ as 
\begin{equation} \label{projection_Radau1}
	\int_{K_j}Pu\,\psi\,\mathrm{d}r=\int_{K_j}u\,\psi\,\mathrm{d}r,\quad  \forall \psi |_{K_j}\in P^{k-1}(K_j),
\end{equation}
for every cell $K_j$ and
\begin{equation}\label{projection_Radau2}
	Pu(r_{j-\frac{1}{2}}^+)=u(r_{j-\frac{1}{2}}^+).
\end{equation}

\subsection{The approximation of the gravitational potential}

Compared with the Euler equations with static gravitational field studied in \cite{li2018well,wu2021uniformly}, the Euler--Poisson equations \eqref{eq:mass}-\eqref{eq:poisson} involve the additional Poisson equation \eqref{eq:poisson} which governs the relation between time dependent $\Phi$ and the density $\rho$. There are extensive numerical methods that could be used to solve the Poisson equation. Here, we present the following simple approach to compute $\Phi$ numerically. 

Note that the source terms in \eqref{eq:momentum} and \eqref{eq:energy} involve only the derivative $\partial \Phi/\partial r$, however, we will compute the numerical approximation of both $\partial \Phi/\partial r$ and $\Phi$ in this paper, denoted by $\partial \Phi_h/\partial r$ and $\Phi_h$ respectively, as the latter will be used in the design of total-energy-conserving methods.

We can integrate the Poisson equation \eqref{eq:poisson} directly and obtain
\begin{align}
	\frac{\partial\Phi_h}{\partial r}&=\frac{4\pi\,G}{r^2}\int_0^r \rho_h\tau^2\,\mathrm{d}\tau,
	\label{eq:DG-poisson}\\
	\Phi_h&=\Phi_h(R)-\int_r^R\frac{\partial\Phi_h}{\partial r}\,\mathrm{d}r,\label{eq:DG-poisson2}
\end{align}
with the boundary conditions ${\partial\Phi_h(0)}/{\partial r}=0$ and $\Phi_h(R)=\text{constant}$. The equations \eqref{eq:DG-poisson} and \eqref{eq:DG-poisson2} mean that we calculate $\frac{\partial\Phi_h}{\partial r}$ and $\Phi_h$ cell by cell that
\begin{align}
	\frac{\partial\Phi_h}{\partial r}(r)&=\frac{4\pi\,G}{r^2}\int_{r_{j-\frac12}}^r \rho_h\tau^2\,\mathrm{d}\tau+\frac{r_{j-\frac12}^2}{r^2}\frac{\partial\Phi_h}{\partial r}(r_{j-\frac12}),
	\label{eq:DG-poisson3}
\end{align}
for $r\in K_j$, $j=1,...,N$ and
\begin{align}
	\Phi_h(r)&=\Phi_h(r_{j+\frac12})-\int_r^{r_{j+\frac12}}\frac{\partial\Phi_h}{\partial r}\,\mathrm{d}r,\label{eq:DG-poisson4}
\end{align}
for $r\in K_j$, $j=N,...,1$. 
We set $\Phi_h(R)=0$ 
in the numerical tests of this paper to observe the total energy conservation up to round-off error. Note that $\rho_h$ is a piecewise polynomial of degree $k$, hence the integrals in \eqref{eq:DG-poisson3} and \eqref{eq:DG-poisson4} can be evaluated exactly over each computational cell $K_j$. The detailed procedure is summarized in the following steps.
\begin{enumerate}
	\item Assume $\rho_h$ is $P^k$ piecewise polynomial taking the form, for $r\in K_j$, $j=1,...,N$,
	\begin{equation}\label{eq:def-coefficient1}
		\rho_h(r)\Big|_{K_j}=\sum_{i=0}^{k}\rho_{j,i}\,r^i.
	\end{equation}
	\item Compute the integration in \eqref{eq:DG-poisson3} exactly and obtain $\partial \Phi_h/\partial r$ as
	\begin{align}
		\frac{\partial \Phi_h}{\partial r}(r)=&\frac{4\pi\,G}{r^2}\left.\sum_{i=0}^{k}\frac{\rho_{j,i}\,\tau^{i+3}}{i+3}\right|_{\tau=r_{j-\frac12}}^r+\frac{r_{j-\frac12}^2}{r^2}\frac{\partial \Phi_h}{\partial r}(r_{j-\frac12})\nonumber\\
		:=&\sum_{i=1}^{k+1}g_{j,i}\,r^i+\frac{g_{j,-2}}{r^2},\label{eq:integral-poisson}
	\end{align}
	for $r\in K_j$, $j=1,...,N$. 
	\item Compute the integration in \eqref{eq:DG-poisson4} exactly and obtain $\Phi_h$ as
	\begin{equation}
		\Phi_h(r)=\Phi_h(r_{j+\frac12})-\left.\left(\sum_{i=1}^{k+1}\frac{g_{j,i}\,\tau^{i+1}}{i+1}-\frac{g_{j,-2}}{\tau}\right)\right|_{\tau=r}^{r_{j+\frac12}},\label{eq:integral-poisson2}
	\end{equation}
	for $r\in K_j$, $j=N,...,1$. 
Here $\rho_{j,i}$ in \eqref{eq:def-coefficient1} and $g_{j,i}$ in \eqref{eq:integral-poisson} are the polynomial coefficients of degree $i$ ($i\ge0$) in the $j$-th cell for $\rho_h$ and ${\partial \Phi_h}/{\partial r}$ respectively. $g_{j,-2}$ in \eqref{eq:integral-poisson} are the coefficient of the term ${1}/{r^2}$ for ${\partial \Phi_h}/{\partial r}$.
\end{enumerate}


	\subsection{The standard DG scheme}\label{sec:convention}

In this subsection, we will briefly review the standard DG method for the Euler--Poisson equations \eqref{eq:mass}-\eqref{eq:poisson}, which will be used in the numerical section for comparison. For ease of presentation, we denote the equations \eqref{eq:mass}-\eqref{eq:energy} as:
\begin{equation}
	\label{eq:problem}
	\frac{\partial\boldsymbol{u}}{\partial t}+\frac{1}{r^2}\frac{\partial}{\partial r}(r^2\boldsymbol{f}(\boldsymbol{u}))=\boldsymbol{s}(\boldsymbol{u},\Phi),
\end{equation}
where
\begin{align}
	\boldsymbol{u}=\left(\begin{matrix}
		\rho\\\rho u\\E
	\end{matrix}\right),~\boldsymbol{f}(\boldsymbol{u})=\left(\begin{matrix}
		\rho u\\\rho u^2+p\\ (E+p)u
	\end{matrix}\right),~\boldsymbol{s}(\boldsymbol{u},\Phi)=\left(\begin{matrix}
		0\\\frac{2p}{r}-\rho\frac{\partial \Phi}{\partial r}\\-\rho u\frac{\partial \Phi}{\partial r}
	\end{matrix}\right).
\end{align}

To derive the semi-discrete DG scheme, we multiply the equations by $r^2$ and test functions, apply integration by parts and replace the boundary value by a monotone numerical flux, which leads to the following DG scheme: find $\boldsymbol{u}_h\in\boldsymbol{\Pi}_h$ such that for any test function $\boldsymbol{v}=(\zeta,\psi,\delta)^T\in\boldsymbol{\Pi}_h$, it holds that 
	\begin{align}
	&\partial_t\int_{K_j}\boldsymbol{u}_h\cdot\boldsymbol{v}\, r^2\mathrm{d}r+r_{j+\frac{1}{2}}^2\hat{\boldsymbol{f}}_{j+\frac{1}{2}}\cdot\boldsymbol{v}_{j+\frac{1}{2}}^--r_{j-\frac{1}{2}}^2\hat{\boldsymbol{f}}_{j-\frac{1}{2}}\cdot\boldsymbol{v}_{j-\frac{1}{2}}^+\nonumber\\
	&\hskip1.2cm-\int_{K_j}\boldsymbol{f}(\boldsymbol{u}_h)\cdot(\partial_r\boldsymbol{v})r^2\mathrm{d}r=\boldsymbol{s}_j, \label{eq:scheme0}
\end{align}
where $\boldsymbol{s}_j$ is the approximation of $\int_{K_j}\boldsymbol{s}(\boldsymbol{u},\Phi)\cdot\boldsymbol{v}r^2\mathrm{d}r$ taking the form
\begin{equation}\label{eq:source-term-standard}
	\boldsymbol{s}_j=\left(\begin{matrix}
	0\\s_j^{[2]}\\s_j^{[3]}
	\end{matrix}\right)=\left(\begin{matrix}
	0\\
	\int_{K_j}\left(\frac{2p_h}{r}-\rho_h\frac{\partial\Phi_h}{\partial r}\right)\psi\, r^2\mathrm{d}r\\
	\int_{K_j}-(\rho u)_h\frac{\partial\Phi_h}{\partial r}\delta\,r^2\mathrm{d}r
	\end{matrix}\right),
\end{equation}
and $\hat{\boldsymbol{f}}$ is the monotone numerical flux.
In this paper, to have good performance in capturing shocks and optimal error convergence rate, we consider the Harten-Lax-van Leer contact (HLLC) flux \citep{toro2013riemann} 
\begin{equation} \label{eq:hllc}
	\hat{\boldsymbol{f}}=\hat{\boldsymbol{f}}(\boldsymbol{u}_h^-,\boldsymbol{u}_h^+)=\begin{cases}
		\boldsymbol{f}(\boldsymbol{u}_h^-) & \text{ if }0\le S^-,\\
		\boldsymbol{f}(\boldsymbol{u}_h^-)+S^-\left(\boldsymbol{u}_*^--\boldsymbol{u}_h^-\right) & \text{ if }S^-\le0\le S^*,\\
		\boldsymbol{f}(\boldsymbol{u}_h^+)+S^+\left(\boldsymbol{u}_*^+-\boldsymbol{u}_h^+\right) & \text{ if }S^*\le0\le S^+,\\
		\boldsymbol{f}(\boldsymbol{u}_h^+) & \text{ if }0\ge S^+,
	\end{cases}
\end{equation}
where $S^-$, $S^+$ and $S^*$ are the signal speeds
\begin{align}
	&S^-=\min\{u_h^--c_h^-,~u_h^+-c_h^+\},\quad S^+=\max\{u_h^-+c_h^-,~u_h^++c_h^+\},\\
	&S^*=\frac{p_h^+-p_h^-+\rho_h^-u_h^-\left(S^--u_h^-\right)-\rho_h^+u_h^+\left(S^+-u_h^+\right)}{\rho_h^-\left(S^--u_h^-\right)-\rho_h^+\left(S^+-u_h^+\right)},
\end{align}
$c_h^-$, $c_h^+$ are the sound speeds calculated from $\boldsymbol{u}_h^-$, $\boldsymbol{u}_h^+$ respectively, and $\boldsymbol{u}_*^+$, $\boldsymbol{u}_*^+$ denote the intermediate states which can be computed via
\begin{equation}
	\boldsymbol{u}_*^\pm=\rho_h^\pm\left(\frac{S^\pm-u_h^\pm}{S^\pm-S^*}\right)\left(\begin{matrix}
		1\\
		S^*\\
		\frac{E_h^\pm}{\rho_h^\pm}+\left(S^*-u_h^\pm\right)\left(S^*-\frac{p_h^\pm}{\rho_h^\pm\left(S^\pm-u_h^\pm\right)}\right)
	\end{matrix}\right).
\end{equation}
The initial condition $\boldsymbol{u}_{h,0}\in\boldsymbol{\Pi}_h$ of the numerical method is given by
\begin{equation} \label{eq:initial}
	\boldsymbol{u}_{h,0}=P\boldsymbol{u}_{ex}(r,t=0),
\end{equation}
where $\boldsymbol{u}_{ex}(r,t=0)$ is the exact initial data, and $P$ stands for the Gauss-Radau projection \eqref{projection_Radau1}-\eqref{projection_Radau2}. 

\subsection{The well-balanced DG scheme}
In this subsection, we will introduce the well-balanced DG scheme which maintains the polytropic equilibrium \eqref{eq:lane-emden1}, or equivalently the Lane--Emden equation \eqref{eq:lane-emden2}. There are some recent works \citep{xing2014exactly,grosheintz2020high,pares2021well} on designing well-balanced methods for general steady states including non-zero equilibrium, which will be studied in future work.

\subsubsection{Solution of Lane--Emden equation}\label{sec:lane}

As illustrated in Section \ref{sec:lane-emden}, the polytropic equilibrium state of the Euler--Poisson equations is based on the solution of the Lane--Emden equation. The Lane--Emden equation can be analytically solved \citep{lane-emden} only for a few special integer values of the index $n$, as outlined below:
\begin{alignat}{2}
	&\text{\bf Analytical solution for n=0 (i.e., $\gamma=\infty$):} \quad &&\theta_0(\xi)=1-\frac{1}{6}\xi^2,  	\label{eq:lane_emden1}\\ 
	&\text{\bf Analytical solution for n=1 (i.e., $\gamma=2$):} \quad &&\theta_1(\xi)=\frac{\sin(\xi)}{\xi},  	\label{eq:lane_emden2}\\ 
	&\text{\bf Analytical solution for n=5 (i.e., $\gamma=\frac65$):} \quad &&\theta_5(\xi)=\frac{1}{\sqrt{1+\frac{1}{3}\xi^2}}.	\label{eq:lane_emden3}
\end{alignat}	

For all other values of $n$, we must resort to numerical solutions. Rewrite the equation \eqref{eq:lane-emden2} as
\begin{align}
	\frac{\partial\theta}{\mathrm{d}\xi}&=-\frac{\varphi}{\xi^2}, \qquad
	\frac{\partial\varphi}{\mathrm{d}\xi}=\theta^n\xi^2,
\end{align}
coupled with boundary conditions $\theta(0)=1$ and $\varphi(0)=0$. We denote them in the vector form by 
\begin{equation}
	\frac{\partial}{\partial \xi}\boldsymbol{y}=\boldsymbol{F}(\xi,\boldsymbol{y})\quad\text{with }\boldsymbol{y}=\left(\begin{aligned}
	\theta\\
	\varphi
	\end{aligned}\right)\quad\text{and}\quad \boldsymbol{F}(\xi,\boldsymbol{y})=\left(\begin{aligned}
	-\frac{\varphi}{\xi^2}\\
	\theta^n\xi^2
	\end{aligned}\right).\label{added1}
\end{equation}
Note that when $\xi=0$, we let $\boldsymbol{F}(0,\boldsymbol{y}(0))=0$ following the given boundary conditions. The equations \eqref{added1} is a system of ordinary differential equations, which can solved by various numerical methods. For example, we can use the fifth-order Runge-Kutta-Fehlberg technique in \cite{norsett1987solving}
\begin{equation}
	\boldsymbol{y}_{j+1}=\boldsymbol{y}_j+h\sum_{i=1}^sb_i\boldsymbol{k}_i,
	\label{eq:RKF45}
\end{equation}
where $\boldsymbol{y}_j$ denotes the numerical solution at the grid $\xi_j$, $h=\xi_{j+1}-\xi_j$ and $\boldsymbol{k}_i$, $i=1,2,\ldots,s$, is given by
\begin{align}
	\boldsymbol{k}_i&=\boldsymbol{F}(\xi_j+c_ih,\boldsymbol{y}_j+h(a_{i1}\boldsymbol{k}_1+a_{i2}\boldsymbol{k}_2+\cdots+a_{i,i-1}\boldsymbol{k}_{i-1})).
\end{align}
with the coefficients $a_{ij}$, $b_i$ and $c_i$ given in the following Butcher tableau:
\begin{align}
\renewcommand\arraystretch{1.2}
\begin{array}
	{c|cccccc}
	0 \\
	\frac{1}{4} & \frac{1}{4} \\
	\frac{3}{8} & \frac{3}{32} & \frac{9}{32} \\
	\frac{12}{13} & \frac{1932}{2197} & -\frac{7200}{2197} & \frac{7296}{2196} \\
	1 & \frac{439}{216} & -8 & \frac{3680}{513} & -\frac{845}{4104} \\
	\frac{1}{2} & -\frac{8}{27} & 2 & -\frac{3544}{2565} & \frac{1859}{4104} & -\frac{11}{40} \\
	\hline
	& \frac{16}{135} & 0 & \frac{6656}{12825} & \frac{28561}{56430} & -\frac{9}{50} & \frac{2}{55}
\end{array}
\end{align}
\noindent The numerical solution of \eqref{eq:lane-emden2} can be solved with enough accuracy by taking small enough $h$.
We note that the solution of the Lane--Emden equation only depends on $n$ (i.e. $\gamma$). For each computational example, $\gamma$ is fixed, hence we can pre-calculate and save the numerical solution $\theta_n$ at the beginning of the simulation.

\subsubsection{Decomposition of the numerical solutions}\label{sec:recovery}

To design the well-balanced method, we follow the approach in \cite{xing2014exactly} where well-balanced methods for the moving water equilibrium of the shallow water equations are designed. The first step is to separate the numerical solutions into the well-balanced equilibrium component $\boldsymbol{u}_h^e\in\boldsymbol{\Pi}_h$ and the fluctuation part $\boldsymbol{u}_h^f \in \boldsymbol{\Pi}_h$ at each time step, which will be elaborated below.

We start by recovering the desired equilibrium state $\boldsymbol{u}^d$ which satisfies the polytropic equilibrium \eqref{eq:lane-emden1} and usually does not belong to $\boldsymbol{\Pi}_h$. For the given $\gamma$ (or $n$), the solution $\theta_n$ of Lane--Emden equation \eqref{eq:lane-emden2} can be pre-computed. Then we evaluate the density and pressure of the numerical solution $\boldsymbol{u}_h(r,t)$ at the center $r=0$ and denote them by $\rho_0$ and $p_0$. By setting $\kappa=p_0/\rho_0^\gamma$ and $\alpha=\sqrt{\frac{\gamma}{\gamma-1}\kappa\rho_0^{\gamma-2}/(4\pi\,G)}$ in \eqref{eq:lane-emden-coef}, we can define the desired equilibrium state $\boldsymbol{u}^d$ as
\begin{equation}
	\label{eq:reference_operator}
	\boldsymbol{u}^d(r)=\left(\begin{matrix}
		\rho_0\left(\theta_n\left(\frac{r}{\alpha}\right)\right)^{\frac{1}{\gamma-1}}\\
		0\\
		\frac{\kappa}{\gamma-1}\rho_0^\gamma\left(\theta_n\left(\frac{r}{\alpha}\right)\right)^{\frac{\gamma}{\gamma-1}}
	\end{matrix}\right).
\end{equation}
Suppose the initial condition is in the equilibrium state, i.e., $\boldsymbol{u}_{ex}(r,0)$ satisfies the polytropic equilibrium \eqref{eq:lane-emden1}. Note that although $\boldsymbol{u}_{h,0}\in\boldsymbol{\Pi}_h$ defined in \eqref{eq:initial} is not in perfect equilibrium, the above procedure can recover the exact equilibrium, i.e., we can compute $\boldsymbol{u}^d$ from $\boldsymbol{u}_{h,0}$ with $\boldsymbol{u}^d=\boldsymbol{u}_{ex}(r,0)$.

Next we can define $\boldsymbol{u}_h^e\in\boldsymbol{\Pi}_h$ as the projection of $\boldsymbol{u}^d$ into the DG solution space:
\begin{equation}
	\boldsymbol{u}^e_h=P\boldsymbol{u}^d, 
	\label{eq:ue}
\end{equation}
and also define the fluctuation term $\boldsymbol{u}^f\in\boldsymbol{\Pi}_h$ as:
\begin{equation}
	\boldsymbol{u}^f_h=\boldsymbol{u}_h-\boldsymbol{u}^e_h.
	\label{eq:uf}
\end{equation}
For the $\theta_n$ explicitly given in \eqref{eq:lane_emden1}-\eqref{eq:lane_emden3}, the integration in the definition of the projection in Eq. \eqref{eq:ue} can be evaluated exactly. Otherwise, the integration is computed by using the values at the Gaussian quadrature points which can be obtained from interpolation.

\begin{remark}
	When recovering the desired equilibrium state $\boldsymbol{u}^d$, two practical issues in the implementation are noted. First, since the density is positive, $\theta(\xi)$ should also be positive for robustness of the simulation, and one should pay attention to the range of the solution of $\theta(\xi)$. If the analytical solution of the Lane-Emden equation is used, there is a constraint on the range of $\xi$ for $n=0,1$. For example, $\theta_0(\xi)>0$ for $\xi\in[0,\sqrt6)$ and $\theta_1(\xi)>0$ for $\xi\in[0,\pi)$. If the numerical solution of the Lane-Emden equation is used, $\theta(\xi)$ may become negative due to numerical integration errors. Therefore, if there is a range constraint on $\theta(\xi)$ and a cell $K_j$ where the value of $\theta(\xi)$ is outside of this range constraint, we set $\left.\boldsymbol{u}^d\right|_{K_j}=0$ for robustness of the simulation.
	Second, if the solution is too far away from the equilibrium state, for example, for the cells $K_j$ with
	\begin{equation}
		\rho^d(r_{j-\frac12})>2\rho_{h,j-\frac12}^+~\text{ or }~p^d(r_{j-\frac12})>2p_{h,j-\frac12}^+,
	\end{equation}
	we set $\left.\boldsymbol{u}^d\right|_{K_j}=0$ to avoid the accumulation of error since $\boldsymbol{u}^d$ is calculated globally. 
\end{remark}

\subsubsection{Well-balanced numerical flux and source term approximation}

With the decomposition of the numerical solutions into the equilibrium component $\boldsymbol{u}_h^e$ and the fluctuation part $\boldsymbol{u}_h^f $ at each time step, we can now present the well-balanced numerical fluxes and the well-balanced source term approximation. 

We can define the modified cell boundary values of $\boldsymbol{u}_h$ as
\begin{equation}
	\boldsymbol{u}_{h,j+\frac{1}{2}}^{*,-}=\boldsymbol{u}^d\left(r_{j+\frac{1}{2}}\right)+\boldsymbol{u}_{h,j+\frac{1}{2}}^{f,-},
	\quad\boldsymbol{u}_{h,j+\frac{1}{2}}^{*,+}=\boldsymbol{u}^d\left(r_{j+\frac{1}{2}}\right)+\boldsymbol{u}_{h,j+\frac{1}{2}}^{f,+},
	\label{eq:u*}
\end{equation}
where $\boldsymbol{u}^d$ is continuous over the whole computational domain 
and defined in \eqref{eq:reference_operator}, and $\boldsymbol{u}^f_h$ is defined in \eqref{eq:uf}.
The well-balanced numerical flux $\hat{\boldsymbol{f}}^*$ can be evaluated by
\begin{align}
	\hat{\boldsymbol{f}}^*&=\hat{\boldsymbol{f}}(\boldsymbol{u}_h^{*,-},\boldsymbol{u}_h^{*,+}),\label{eq:flux1}
\end{align}
		with $\hat{\boldsymbol{f}}$ being the HLLC flux defined in \eqref{eq:hllc}.
		
		For the well-balanced source term approximation, we follow the main idea in \cite{xing2014exactly,li2018well}, but with some modifications introduced below. As $s_j^{[3]}$ in \eqref{eq:source-term-standard} equals to zero automatically at the equilibrium state, we focus only on the term $s_j^{[2]}$. Since $\boldsymbol{u}^d$ is the equilibrium solution and continuous, we have 
		\begin{align}
			&r_{j+\frac{1}{2}}^2\boldsymbol{f}\left(\boldsymbol{u}^d\left(r_{j+\frac{1}{2}}\right)\right)\cdot\boldsymbol{v}_{j+\frac{1}{2}}^--r_{j-\frac{1}{2}}^2\boldsymbol{f}\left(\boldsymbol{u}^d\left(r_{j-\frac{1}{2}}\right)\right)\cdot\boldsymbol{v}_{j-\frac{1}{2}}^+\nonumber\\
			&\qquad
			-\int_{K_j}\boldsymbol{f}\left(\boldsymbol{u}^d\right)\cdot(\partial_r\boldsymbol{v})\,r^2\mathrm{d}r-\int_{K_j}\boldsymbol{s}\left(\boldsymbol{u}^d,\Phi^d\right)\cdot\boldsymbol{v}\,r^2\mathrm{d}r=0,\label{eq:source-term-ud}
		\end{align}
		where $\Phi^d$ is solved exactly from $\rho^d$ in \eqref{eq:poisson}. 
		Because $\boldsymbol{u}_h^e\in\boldsymbol{\Pi}_h$ is the projection of $\boldsymbol{u}^d$ with high-order accuracy, and $\boldsymbol{u}^d$ is continuous at the cell interfaces, we have
		\begin{align}
			&r_{j+\frac{1}{2}}^2f^{[2]}\left(\boldsymbol{u}^d\left(r_{j+\frac{1}{2}}\right)\right)\psi_{j+\frac{1}{2}}^-
			-r_{j-\frac{1}{2}}^2f^{[2]}\left(\boldsymbol{u}^d\left(r_{j-\frac{1}{2}}\right)\right)\psi_{j-\frac{1}{2}}^+\notag
			\\
			&\quad-\int_{K_j}f^{[2]}(\boldsymbol{u}^e_h)\,(\partial_r\psi)\,r^2\mathrm{d}r-\int_{K_j}\left(\frac{2p^e_h}{r}-\rho_h^e\frac{\partial\Phi^e_h}{\partial r}\right)\psi\,r^2\mathrm{d}r\notag\\
			&=\mathcal{O}((\Delta r_j)^{k+1}),
		\end{align}
		where $f^{[2]}$ denotes the second component of $\boldsymbol{f}$ and $\partial\Phi^e_h/\partial r$ is evaluated as in \eqref{eq:DG-poisson}:
		\begin{equation}
			\frac{\partial\Phi^e_h}{\partial r}=\frac{4\pi\,G}{r^2}\int_0^r\rho^e_h\tau^2\mathrm{d}\tau,
		\end{equation}
		with $\frac{\partial\Phi^e_h(0)}{\partial r}=0$. 
		The approximation of the source term $\boldsymbol{s}_j^{\text{wb}}$ is then defined as
		\begin{equation}
			\boldsymbol{s}_j^{\text{wb}}=\left[0,s_j^{[2],\text{wb}},s_j^{[3]}\right]^T, \qquad
			s_j^{[2],\text{wb}}=s_j^{[2]}+s_j^{[2],\text{cor}},
			\label{eq:source-term}
		\end{equation}
		where $s_j^{[2]}$ and $s_j^{[3]}$ are defined in \eqref{eq:source-term-standard} and the correction term $s_j^{[2],\text{cor}}$ takes the form
		\begin{align}\label{eq:source-term-cor}
			s_j^{[2],\text{cor}}=&~r_{j+\frac{1}{2}}^2 p^d\left(r_{j+\frac{1}{2}}\right)\psi_{j+\frac{1}{2}}^-
			-r_{j-\frac{1}{2}}^2 p^d\left(r_{j-\frac{1}{2}}\right)\psi_{j-\frac{1}{2}}^+
			\nonumber\\
			&-\int_{K_j} p^e_h\,(\partial_r\psi)\,r^2\mathrm{d}r-\int_{K_j}\left(\frac{2p^e_h}{r}-\rho_h^e\frac{\partial\Phi^e_h}{\partial r}\right)\psi\,r^2\mathrm{d}r,
		\end{align}
		which will play an important role in the well-balanced proof. 
		
		%

		\subsubsection{Well-balanced semi-discrete DG scheme}
		
		The well-balanced semi-discrete DG scheme can be written as:  find $\boldsymbol{u}_h\in\boldsymbol{\Pi}_h$ such that for any test function $\boldsymbol{v}=(\zeta,\psi,\delta)^T\in\boldsymbol{\Pi}_h$, it holds that 
		\begin{equation}
			\label{eq:scheme}
			\partial_t\int_{K_j}\boldsymbol{u}_h\cdot\boldsymbol{v}\, r^2\mathrm{d}r=\mathcal{L}_j(\boldsymbol{u}_h,\boldsymbol{v})=\mathcal{F}_j(\boldsymbol{u}_h,\boldsymbol{v})+ \boldsymbol{s}_j^{\text{wb}}(\boldsymbol{u}_h,\boldsymbol{v}),
		\end{equation}
		with $\boldsymbol{s}_j^{\text{wb}}$ defined in \eqref{eq:source-term} and
		\begin{align} 
			\mathcal{F}_j(\boldsymbol{u}_h,\boldsymbol{v})=&-r_{j+\frac{1}{2}}^2\hat{\boldsymbol{f}}^*_{j+\frac{1}{2}}\cdot\boldsymbol{v}_{j+\frac{1}{2}}^-+r_{j-\frac{1}{2}}^2\hat{\boldsymbol{f}}_{j-\frac{1}{2}}^*\cdot\boldsymbol{v}_{j-\frac{1}{2}}^+\nonumber\\
			&+\int_{K_j}\boldsymbol{f}(\boldsymbol{u}_h)\cdot(\partial_r\boldsymbol{v})r^2\mathrm{d}r,
		\end{align}
		with the source term approximation $\boldsymbol{s}_j^{\text{wb}}$ defined in \eqref{eq:source-term}, and the numerical flux $\hat{\boldsymbol{f}}^*$
		defined in \eqref{eq:flux1}. 
		We have the following result on its well-balanced property.
		\begin{proposition}
			\label{prop1}
			The semi-discrete DG scheme \eqref{eq:scheme}, with initial condition defined in \eqref{eq:initial}, maintains the equilibrium state \eqref{eq:lane-emden1} exactly.
		\end{proposition}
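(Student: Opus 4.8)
The plan is to show that when the discrete solution coincides with the projected equilibrium, $\boldsymbol{u}_h=\boldsymbol{u}_h^e=P\boldsymbol{u}^d$, the right-hand side operator $\mathcal{L}_j(\boldsymbol{u}_h,\boldsymbol{v})$ in \eqref{eq:scheme} vanishes identically for every test function $\boldsymbol{v}\in\boldsymbol{\Pi}_h$ and every cell $K_j$, so that $\partial_t\int_{K_j}\boldsymbol{u}_h\cdot\boldsymbol{v}\,r^2\mathrm{d}r=0$ and the equilibrium is frozen in time. First I would note that the initial data \eqref{eq:initial} together with the recovery procedure gives $\boldsymbol{u}^d=\boldsymbol{u}_{ex}(r,0)$ and hence $\boldsymbol{u}_h=\boldsymbol{u}_h^e$, so the fluctuation \eqref{eq:uf} is exactly zero. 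Consequently the modified interface values \eqref{eq:u*} collapse to $\boldsymbol{u}_{h,j+\frac{1}{2}}^{*,\pm}=\boldsymbol{u}^d(r_{j+\frac{1}{2}})$, i.e. both one-sided traces agree. Invoking the consistency of the HLLC flux \eqref{eq:hllc}, $\hat{\boldsymbol{f}}(\boldsymbol{u},\boldsymbol{u})=\boldsymbol{f}(\boldsymbol{u})$, the well-balanced flux \eqref{eq:flux1} reduces to the physical flux of the continuous equilibrium, $\hat{\boldsymbol{f}}^*_{j\pm\frac{1}{2}}=\boldsymbol{f}(\boldsymbol{u}^d(r_{j\pm\frac{1}{2}}))$. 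Since the equilibrium velocity is zero, this flux is $(0,\,p^d(r_{j\pm\frac{1}{2}}),\,0)^T$.

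For the density and energy components the argument is immediate. The corresponding physical fluxes $f^{[1]}=\rho u$ and $f^{[3]}=(E+p)u$ both vanish at zero velocity, so the interface and volume flux contributions in $\mathcal{F}_j$ are zero; the density source is identically zero, and the energy source $s_j^{[3]}$ in \eqref{eq:source-term-standard} vanishes because the projected momentum $(\rho u)_h^e$ is exactly zero. Hence the first and third components of $\mathcal{L}_j$ are zero with no correction needed.

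The crux is the momentum component. Here $f^{[2]}=\rho u^2+p$ reduces to the pressure, so $\mathcal{F}_j^{[2]}$ consists of the interface pressure terms built from $p^d(r_{j\pm\frac{1}{2}})$ and the volume term $\int_{K_j}p_h^e(\partial_r\psi)r^2\mathrm{d}r$. Because $\boldsymbol{u}_h=\boldsymbol{u}_h^e$, the discrete density, pressure, and (through the density-only Poisson solve \eqref{eq:DG-poisson}) the potential entering the standard source $s_j^{[2]}$ coincide with their equilibrium counterparts $\rho_h^e$, $p_h^e$, $\partial_r\Phi_h^e$. The well-balanced source $s_j^{[2],\text{wb}}=s_j^{[2]}+s_j^{[2],\text{cor}}$ has been engineered precisely so that the correction \eqref{eq:source-term-cor} is the term-by-term negative of $\mathcal{F}_j^{[2]}+s_j^{[2]}$ at equilibrium: the interface pressure terms, the volume pressure integral, and the remaining source integral each cancel in pairs, leaving $\mathcal{L}_j^{[2]}=0$.

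The main care point is ensuring that every quantity appearing in the standard source at equilibrium really matches the one hard-wired into the correction term. This rests on two facts I would state explicitly: the projection of the zero-momentum equilibrium has exactly zero momentum, so the velocity-dependent flux pieces drop out cleanly, and the numerical potential is a functional of $\rho_h$ alone, so that $\partial_r\Phi_h=\partial_r\Phi_h^e$ whenever $\rho_h=\rho_h^e$. Once these are in place the cancellation is purely algebraic and holds for all $\boldsymbol{v}$, completing the argument. It is also worth verifying the HLLC consistency directly at $u=0$, where $S^*=0$ places the flux on the boundary between the two star-region branches; a short computation shows the intermediate state collapses to the common state, confirming $\hat{\boldsymbol{f}}^*=\boldsymbol{f}(\boldsymbol{u}^d)$.
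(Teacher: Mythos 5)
Your proposal is correct and follows essentially the same three-step argument as the paper's proof: the recovery procedure gives $\boldsymbol{u}_h=\boldsymbol{u}_h^e$ with zero fluctuation (hence $\partial_r\Phi_h=\partial_r\Phi_h^e$), flux consistency at the now-continuous interface values reduces $\hat{\boldsymbol{f}}^*$ to $(0,\,p^d,\,0)^T$, and the correction term \eqref{eq:source-term-cor} cancels the remaining momentum-component terms algebraically for every test function. Your explicit verification that the HLLC intermediate states collapse to the common state when $u=0$ (so $S^*=0$) is a nice touch that the paper leaves implicit in its consistency claim, but it does not change the structure of the argument.
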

		\begin{proof}
			Suppose the initial condition is at the equilibrium state \eqref{eq:lane-emden1}. We will complete the well-balanced proof in three steps. First, we will show that $\boldsymbol{u}_h=\boldsymbol{u}^e_h$ and $\boldsymbol{f}^e_h=0$. By the definition of $\boldsymbol{u}^d$ in Eq. \eqref{eq:reference_operator}, we can conclude that $\boldsymbol{u}^d=\boldsymbol{u}_{ex}$ as both are the stationary solutions of \eqref{eq:lane-emden1} and share the same value at the center $r=0$. It then follows from \eqref{eq:ue} and \eqref{eq:uf} that $\boldsymbol{u}^e_h=\boldsymbol{u}_h$ and $\boldsymbol{u}^f_h=0$. Moreover, we conclude that $\partial\Phi_h/\partial r=\partial\Phi_h^e/\partial r$, because $\partial\Phi_h/\partial r$ and $\partial\Phi_h^e/\partial r$ are calculated from $\rho_h$ and $\rho_h^e$, respectively, using \eqref{eq:integral-poisson}, and $\rho_h=\rho_h^e$.
			
			Second, we would like to show that 
			$\hat{f}_{j+\frac{1}{2}}^{*,[2]}=p^d\left(r_{j+\frac{1}{2}}\right)$.
			Since $\boldsymbol{u}^f=0$, we have that
			$\boldsymbol{u}_h^{*,-}=\boldsymbol{u}_h^{*,+}=\boldsymbol{u}^d$
			at the interface $r_{j+1/2}$, following the definition \eqref{eq:u*}.
			In Eq. \eqref{eq:flux1}, we have
			\begin{align}\label{eq:prop-wb-0}
				\hat{\boldsymbol{f}}_{j+\frac12}^*&=\hat{\boldsymbol{f}}(\boldsymbol{u}_{h,j+\frac12}^{*,-},\boldsymbol{u}_{h,j+\frac12}^{*,+})=\boldsymbol{f}(\boldsymbol{u}_{h,j+\frac12}^{*,\pm})\nonumber\\
				&=\boldsymbol{f}\left(\boldsymbol{u}^d\left(r_{j+\frac12}\right)\right)=\left(\begin{matrix}
					0\\
					p^d\left(r_{j+\frac12}\right)\\
					0
				\end{matrix}\right),
			\end{align}
			where the last equality follows from the zero velocity in the vector $\boldsymbol{u}^d$.
			
			Lastly, it is easy to observe that the first and third components of $\mathcal{L}_j$ in Eq. \eqref{eq:scheme} are zero. With the source term defined in \eqref{eq:source-term}-\eqref{eq:source-term-cor}, the second component of of $\mathcal{L}_j$ can be simplified as
			\begin{align}
				&\mathcal{L}^{[2]}_j(\boldsymbol{u}_h,\boldsymbol{v})=\int_{K_j}f^{[2]}(\boldsymbol{u}_h)(\partial_r\psi)r^2\mathrm{d}r-r_{j+\frac{1}{2}}^2 \hat{f}_{j+\frac{1}{2}}^{*,[2]} \psi_{j+\frac{1}{2}}^-\nonumber\\
				&\qquad+r_{j-\frac{1}{2}}^2 \hat{f}_{j-\frac{1}{2}}^{*,[2]} \psi_{j-\frac{1}{2}}^++s_j^{[2],\text{wb}}\nonumber\\
				&\quad=\uline{\int_{K_j}f^{[2]}(\boldsymbol{u}_h)(\partial_r\psi)r^2\mathrm{d}r}
				-\dotuline{r_{j+\frac{1}{2}}^2 p^d\left(r_{j+\frac{1}{2}}\right)\psi_{j+\frac{1}{2}}^-}\nonumber\\
				&\qquad+\dashuline{r_{j-\frac{1}{2}}^2 p^d\left(r_{j-\frac{1}{2}}\right)\psi_{j-\frac{1}{2}}^+}+\uwave{\int_{K_j}\left(\frac{2p_h}{r}-\rho_h\frac{\partial\Phi_h}{\partial r}\right)\psi\, r^2\mathrm{d}r}\nonumber\\
				&\qquad-\uwave{\int_{K_j}\left(\frac{2p^e_h}{r}-\rho_h^e\frac{\partial\Phi^e_h}{\partial r}\right)\psi\,r^2\mathrm{d}r}-\uline{\int_{K_j} p^e_h\,(\partial_r\psi)\,r^2\mathrm{d}r}\nonumber\\
				&\qquad+\dotuline{r_{j+\frac{1}{2}}^2 p^d\left(r_{j+\frac{1}{2}}\right)\psi_{j+\frac{1}{2}}^-}
				-\dashuline{r_{j-\frac{1}{2}}^2 p^d\left(r_{j-\frac{1}{2}}\right)\psi_{j-\frac{1}{2}}^+}\nonumber\\
				&\quad=0,
			\end{align}
			where different underlines are used in the last equality to highlight the terms that cancel each other.
			Therefore, we can conclude that the semi-discrete scheme \eqref{eq:scheme} maintains the equilibrium state \eqref{eq:lane-emden1} exactly.
		\end{proof}
	
	\subsection{The well-balanced total-energy-conserving RKDG scheme}\label{sec:scheme-total-energy}

In this subsection, we present the approach to design a total-energy-conserving fully discrete DG method to ensure the scheme has the total energy conservation property \eqref{eq:total-energy} on the discrete level. This will involve two components: the approximation $s_j^{[3]}$ of the source term in the energy equation \eqref{eq:energy}, and the temporal discretization. To illustrate the idea, we will start with the semi-discrete method to explain the approximation $s_j^{[3]}$, followed by the forward Euler time discretization, and the high-order Runge--Kutta method at the end. 

\subsubsection{Semi-discrete total-energy-conserving method}
The key idea of designing the total-energy-conserving scheme is on the approximation of the source term in the energy equation \eqref{eq:energy}. Let us apply integration by parts on the source term approximation $s_j^{[3]}$ in \eqref{eq:source-term-standard}, which leads to
\begin{align}
	s_j^{[3]}=&\int_{K_j}-(\rho u)_h\,\frac{\partial\Phi_h}{\partial r}\,\delta\,r^2\mathrm{d}r\nonumber\\
	=&-\left((\rho u)_h\,\Phi_h\,\delta\,r^2\right)\Big|_{r_{j-\frac12}^+}^{r_{j+\frac12}^-}+\int_{K_j}\frac{\partial}{\partial r}\left((\rho u)_h\,r^2\right)\,\Phi_h\,\delta\mathrm{d}r\nonumber\\
	&+\int_{K_j}(\rho u)_h\,\Phi_h\,\frac{\partial \delta}{\partial  r}\,r^2\mathrm{d}r\nonumber\\
	\approx&-\left(\hat{f}^{*,[1]}\,\Phi_h\,\delta\,r^2\right)\Big|_{r_{j-\frac12}^+}^{r_{j+\frac12}^-}-\int_{K_j}\frac{\partial\rho_h}{\partial t}\,\Phi_h\,\delta\,r^2\mathrm{d}r\nonumber\\
	&+\int_{K_j}(\rho u)_h\,\Phi_h\,\frac{\partial \delta}{\partial  r}\,r^2\mathrm{d}r\nonumber\\
	:=&s_j^{[3],\text{tec}}\left(\boldsymbol{u}_h,\hat{\boldsymbol{f}}^{*},\frac{\partial\rho_h}{\partial t},\Phi_h,\delta\right),\label{eq:conservative-source-term-3}
\end{align}
where the superscript `{\rm tec}' stands for total-energy-conserving, $\delta$ is the test function and $\hat{f}^{*,[1]}$ is the first component of the numerical flux in \eqref{eq:flux1}. 	Equation \eqref{eq:mass} is used to replace $\frac{\partial}{\partial r}\left((\rho u)_h\,r^2\right)$ by $-r^{2}\partial \rho_h/\partial t$ (approximately). 

With this reformulation of the source term, we can now modify the semi-discrete well-balanced method \eqref{eq:scheme} slightly, and obtain the semi-discrete well-balanced and total-energy-conserving scheme: find $\boldsymbol{u}_h\in\boldsymbol{\Pi}_h$ such that for any test function $\boldsymbol{v}=(\zeta,\psi,\delta)^T\in\boldsymbol{\Pi}_h$, it holds that
\begin{align}
	&\partial_t\int_{K_j}\boldsymbol{u}_h\cdot\boldsymbol{v}\, r^2\mathrm{d}r\nonumber\\
	=&\mathcal{F}_j(\boldsymbol{u}_h,\boldsymbol{v})+\mathcal{S}^{[2],\text{wb}}_j(\boldsymbol{u}_h,\boldsymbol{v})+\mathcal{S}_j^{[3],\text{tec}}\left(\boldsymbol{u}_h,\hat{\boldsymbol{f}}^{*},\frac{\partial\rho_h}{\partial t},\Phi_h,\delta\right),\label{eq:semi-discrete-scheme}
\end{align}
where 
\begin{equation}
	\mathcal{S}^{[2],\text{wb}}_j=\left[0,s_j^{[2],\text{wb}},0\right]^T, \qquad
	\mathcal{S}^{[3],\text{tec}}_j=\left[0,0,s_j^{[3],\text{tec}}\right]^T.
	\label{eq:source-term-new}
\end{equation}

\begin{proposition}\label{prop-semi}
	For the semi-discrete scheme \eqref{eq:semi-discrete-scheme}, we have the following total energy conservation property
	\begin{align}
		&\frac{\partial}{\partial t}\int_{K_j}\left(E_h+\frac12\rho_h\,\Phi_h\right)r^2\,\mathrm{d}r+\Bigg(\hat{f}^{*,[3]}+\hat{f}^{*,[1]}\Phi_h\nonumber\\
		&\qquad\quad\left.-\frac{1}{8\pi\,G}\left(\Phi_h\frac{\partial}{\partial t}\left(\frac{\partial\Phi_h}{\partial r}\right)-\frac{\partial\Phi_h}{\partial t}\frac{\partial\Phi_h}{\partial r}\right)\Bigg)r^2\,\right|_{r_{j-\frac12}^+}^{r_{j+\frac12}^-}=0, 
	\end{align}
	which is consistent with the continuous result in Eq. \eqref{eq:total-energy-continuous}, and leads to the conservation of total energy $\int_\Omega (E_h+\frac12\rho_h\,\Phi_h)r^2\,\mathrm{d}r$.
\end{proposition}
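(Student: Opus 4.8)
The plan is to reproduce, at the discrete level, the two-part derivation of \eqref{eq:total-energy-continuous} from Section \ref{sec2.3}, using the fact that the numerically constructed potential satisfies the Poisson equation \emph{exactly} in a pointwise sense within each cell. Differentiating \eqref{eq:DG-poisson} in $r$ gives
\[
\frac{\partial}{\partial r}\Big(r^2\frac{\partial\Phi_h}{\partial r}\Big)=4\pi\,G\,\rho_h\,r^2,
\]
and differentiating once more in time yields the analogous identity with $\rho_h,\partial_r\Phi_h$ replaced by $\partial_t\rho_h,\partial^2_{rt}\Phi_h$. These two exact relations play the role that \eqref{eq:poisson} plays in the continuous proof, and they are what make the discrete algebra close rather than merely hold to truncation order.

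First I would take the test function $\boldsymbol{v}=(0,0,1)^T$, i.e.\ $\delta\equiv 1\in\mathcal{V}_h$, in the scheme \eqref{eq:semi-discrete-scheme}. Since $\partial_r\delta=0$, the volume flux term and the last term of $s_j^{[3],\text{tec}}$ in \eqref{eq:conservative-source-term-3} both vanish, leaving
\[
\frac{\partial}{\partial t}\int_{K_j}E_h\,r^2\,\mathrm{d}r=-\Big(\big(\hat{f}^{*,[3]}+\hat{f}^{*,[1]}\,\Phi_h\big)r^2\Big)\Big|_{r_{j-\frac12}^+}^{r_{j+\frac12}^-}-\int_{K_j}\frac{\partial\rho_h}{\partial t}\,\Phi_h\,r^2\,\mathrm{d}r.
\]
I would then split the target quantity exactly as in Section \ref{sec2.3}, writing $\frac{\partial}{\partial t}\int_{K_j}(E_h+\tfrac12\rho_h\Phi_h)r^2$ as $\frac{\partial}{\partial t}\int_{K_j}E_h r^2$ plus $\tfrac12\int_{K_j}\partial_t\rho_h\,\Phi_h\,r^2$ plus $\tfrac12\int_{K_j}\rho_h\,\partial_t\Phi_h\,r^2$, and substitute the line above. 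The $-\int_{K_j}\partial_t\rho_h\,\Phi_h\,r^2$ inherited from $s_j^{[3],\text{tec}}$ combines with the $+\tfrac12\int_{K_j}\partial_t\rho_h\,\Phi_h\,r^2$ to leave precisely the antisymmetric gravitational integral $\tfrac12\int_{K_j}(\rho_h\,\partial_t\Phi_h-\partial_t\rho_h\,\Phi_h)r^2\,\mathrm{d}r$, which is the discrete counterpart of the ``second term'' in the continuous proof. This cancellation is exactly the reason the source term was designed with the $-\int_{K_j}\partial_t\rho_h\,\Phi_h\,r^2$ piece, and recognizing it is the first substantive step.

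For the remaining gravitational integral I would insert the two exact discrete Poisson identities to rewrite $\rho_h r^2$ and $\partial_t\rho_h\,r^2$ as $r$-derivatives of $r^2\partial_r\Phi_h$ and $r^2\partial^2_{rt}\Phi_h$, then integrate by parts once in each of the two resulting terms over $K_j$. The two interior volume integrals produced are both equal to $\int_{K_j}r^2\,\partial_r\Phi_h\,\partial^2_{rt}\Phi_h\,\mathrm{d}r$ and cancel, leaving only the boundary contribution
\[
\frac{1}{8\pi G}\Big(r^2\,\tfrac{\partial\Phi_h}{\partial r}\,\tfrac{\partial\Phi_h}{\partial t}-r^2\,\tfrac{\partial^2\Phi_h}{\partial r\partial t}\,\Phi_h\Big)\Big|_{r_{j-\frac12}^+}^{r_{j+\frac12}^-},
\]
which is exactly $-\tfrac{1}{8\pi G}\big(\Phi_h\,\partial_t(\partial_r\Phi_h)-\partial_t\Phi_h\,\partial_r\Phi_h\big)r^2$ evaluated at the interfaces. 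Assembling this with the two flux-type boundary terms reproduces the bracketed expression in the statement, giving the per-cell identity; summing over $j$ then telescopes the interface terms (because $\Phi_h$ and $\partial_r\Phi_h$ are continuous by construction), yielding conservation of $\int_\Omega(E_h+\tfrac12\rho_h\Phi_h)r^2\,\mathrm{d}r$.

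The main obstacle I anticipate is not the algebra, which follows the continuous derivation nearly verbatim, but justifying that the integration by parts on the potential terms is \emph{exact}: this relies on $\Phi_h$ satisfying the discrete Poisson relation pointwise and on $\Phi_h,\partial_r\Phi_h$ being smooth within each cell. One must treat the innermost cell $K_1$ with care, where the boundary condition $\partial_r\Phi_h(0)=0$ removes the otherwise singular $1/r^2$ and $1/r$ contributions so that every integrand stays bounded; away from $r=0$ the per-cell integrands are polynomials plus $1/r$-type terms and present no difficulty.
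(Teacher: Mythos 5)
Your proof is correct and follows essentially the same route as the paper's: test with $\boldsymbol{v}=(0,0,1)^T$ to reduce the energy equation to boundary fluxes plus the $-\int_{K_j}\partial_t\rho_h\,\Phi_h\,r^2\,\mathrm{d}r$ term, split the time derivative into the same two parts, and use the fact that $\Phi_h$ satisfies the Poisson relation exactly so that the antisymmetric gravitational integral reduces by integration by parts to pure interface terms. The only difference is presentational — you write out the integration by parts that the paper delegates to Section \ref{sec2.3}, and you add a sensible (but not strictly required) caveat about the innermost cell.
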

\begin{proof}
	Following the approach used in the proof of \eqref{eq:total-energy-continuous}, we decompose the first term into two parts:
	\begin{equation}
		\frac{\partial}{\partial t}\int_{K_j}\left(E_h+\frac12\rho_h\,\Phi_h\right)r^2\,\mathrm{d}r=\text{\RNum{1}}+\text{\RNum{2}},
	\end{equation}
	with
	\begin{align}
		\text{\RNum{1}}&=\int_{K_j}\left(\frac{\partial E_h}{\partial t}+\frac{\partial\rho_h}{\partial t}\Phi_h\right)r^2\,\mathrm{d}r,\\
		\text{\RNum{2}}&=\int_{K_j}\frac12\left(\rho_h\frac{\partial\Phi_h}{\partial t}-\frac{\partial\rho_h}{\partial t}\Phi_h\right)r^2\,\mathrm{d}r.
	\end{align}
	We set the test function $\boldsymbol{v}$ as $(0,0,1)^T$ in \eqref{eq:semi-discrete-scheme} to obtain
	\begin{align}
		&\int_{K_j}\frac{\partial E_h}{\partial t}\,r^2\,\mathrm{d}r\nonumber\\
		&=-\left.\left(\hat{\boldsymbol{f}}^{*,[3]}+\hat{\boldsymbol{f}}^{*,[1]}\Phi_h\right)r^2\,\right|_{r_{j-\frac12}^+}^{r_{j+\frac12}^-}-\int_{K_j}\frac{\partial\rho_h}{\partial t}\Phi_h\,r^2\,\mathrm{d}r,
	\end{align}
	which leads to the simplification of part \RNum{1} as
	\begin{equation}
		\text{\RNum{1}}=-\left.\left(\hat{\boldsymbol{f}}^{*,[3]}+\hat{\boldsymbol{f}}^{*,[1]}\Phi_h\right)r^2\,\right|_{r_{j-\frac12}^+}^{r_{j+\frac12}^-}.
	\end{equation}
	Next, note that the evaluation of $\Phi_h$ in \eqref{eq:DG-poisson} and \eqref{eq:DG-poisson2} are exact, i.e., 
	\begin{align}\label{eq:prop-ef-1}
		4\pi\,G\,\rho_h\,r^2=\frac{\partial}{\partial r}\left(\,r^{2}\,\frac{\partial \Phi_h}{\partial r}\,\right),
	\end{align}
	therefore, following the exact same step in the proof of \eqref{eq:total-energy-continuous} in Section \ref{sec2.3}, we have
	\begin{align}
		\text{\RNum{2}}&
		=\frac{1}{8\pi\,G}\left.\left(\frac{\partial\Phi_h}{\partial t}\frac{\partial\Phi_h}{\partial r}-\Phi_h\frac{\partial}{\partial t}\left(\frac{\partial\Phi_h}{\partial r}\right)\right)r^2\,\right|_{r_{j-\frac12}^+}^{r_{j+\frac12}^-}.
	\end{align} 
	The combination of these two equations leads to the total energy conservation property, which finishes the proof.  
\end{proof}

\subsubsection{Forward Euler time discretization and total energy conservation}

The extension of the total energy conservation property in Proposition \ref{prop-semi} to fully discrete schemes coupled with high-order RK methods is a non-trivial task. Let us start with the simpler first-order Euler method, and use it as an example to illustrate how to obtain the fully discrete second- and third-order total-energy-conserving schemes. 

The straightforward application of the forward Euler method to the semi-discrete well-balanced and total-energy-conserving scheme \eqref{eq:semi-discrete-scheme} may not conserve the total energy automatically. The only term that needs extra care is the approximation of $\mathcal{S}_j^{[3],\text{tec}}$ in \eqref{eq:conservative-source-term-3}, \eqref{eq:source-term-new}, and
the fully discrete scheme with forward Euler discretization is given by
\begin{align}
	&\int_{K_j}\boldsymbol{u}_h^{n+1}\cdot\boldsymbol{v}\,r^2\mathrm{d}r\nonumber\\
	&\quad=\int_{K_j}\boldsymbol{u}_h^n\cdot\boldsymbol{v}\,r^2\mathrm{d}r+\Delta t\Bigg(\mathcal{F}_j(\boldsymbol{u}_h^n,\boldsymbol{v})+\mathcal{S}_j^{[2],\text{wb}}(\boldsymbol{u}_h^n,\boldsymbol{v})\nonumber\\
	&\qquad+\mathcal{S}_j^{[3],\text{tec}}\left(\boldsymbol{u}_h^n,\hat{\boldsymbol{f}}^{*,n},\frac{\rho^{n+1}-\rho^n}{\Delta t},\frac{\Phi_h^{n+1}+\Phi_h^n}{2},\delta\right)\Bigg).\label{eq:fully-scheme-ef}
\end{align}
Note that although the right-hand side of \eqref{eq:fully-scheme-ef} contains $\rho_h^{n+1}$ and $\Phi_h^{n+1}$, the proposed scheme is still an explicit scheme as outlined below. First we can use the density equation to explicitly evaluate $\rho_h^{n+1}$, and obtain $\Phi_h^{n+1}$ following \eqref{eq:integral-poisson}-\eqref{eq:integral-poisson2}. Next the momentum equation is solved to update $(\rho u)_h^{n+1}$. Finally, with the available $\rho_h^{n+1}$ and $\Phi_h^{n+1}$, we can solve the energy equation to compute $E_h^{n+1}$ explicitly.

\begin{proposition}
	The fully discrete forward Euler DG scheme \eqref{eq:fully-scheme-ef} conserves total energy:
	\begin{equation}\label{eq:total-energy-ef}
		\int_{\Omega}\left(E_h^{n+1}+\frac12\rho_h^{n+1}\,\Phi_h^{n+1}\right)r^2\mathrm{d}r=\int_{\Omega}\left(E_h^n+\frac12\rho_h^n\,\Phi_h^n\right)r^2\mathrm{d}r,
	\end{equation}
	with outer boundary conditions $\Phi_h^n(R)=\Phi_h^{n+1}(R)=0$ and $\hat{\boldsymbol{f}}_{N+\frac12}^{*,n,[3]}=0$.
\end{proposition}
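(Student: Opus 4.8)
The plan is to mirror the continuous derivation of \eqref{eq:total-energy-continuous} and the semi-discrete proof of Proposition \ref{prop-semi}, but now at the fully discrete level, exploiting the specific time-averaging and discrete mass derivative hard-wired into \eqref{eq:fully-scheme-ef}. First I would integrate the claimed balance over $\Omega$ and rewrite the increment of the discrete gravitational energy by the elementary product identity
\begin{equation*}
	\rho_h^{n+1}\Phi_h^{n+1}-\rho_h^n\Phi_h^n=\bigl(\rho_h^{n+1}-\rho_h^n\bigr)\frac{\Phi_h^{n+1}+\Phi_h^n}{2}+\frac{\rho_h^{n+1}+\rho_h^n}{2}\bigl(\Phi_h^{n+1}-\Phi_h^n\bigr).
\end{equation*}
Writing $\overline{\Phi}_h=(\Phi_h^{n+1}+\Phi_h^n)/2$ and $\Delta\Phi_h=\Phi_h^{n+1}-\Phi_h^n$, this splits the full increment $\int_\Omega\bigl(E_h^{n+1}-E_h^n+\tfrac12(\rho_h^{n+1}\Phi_h^{n+1}-\rho_h^n\Phi_h^n)\bigr)r^2\,\mathrm{d}r$ into a discrete part $\mathrm{I}$, namely $\int_\Omega(E_h^{n+1}-E_h^n)r^2\,\mathrm{d}r+\int_\Omega(\rho_h^{n+1}-\rho_h^n)\overline{\Phi}_h\,r^2\,\mathrm{d}r$, and a discrete part $\mathrm{II}$ collecting the remaining antisymmetric combination. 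The crucial observation is that the time-averaged potential $\overline{\Phi}_h$ arising here is precisely the one prescribed in the source term $\mathcal S_j^{[3],\mathrm{tec}}$ of \eqref{eq:fully-scheme-ef}.

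For part $\mathrm{I}$ I would take the test function $\boldsymbol v=(0,0,1)^T$ in \eqref{eq:fully-scheme-ef}. With $\delta\equiv1$ the volume term $\int_{K_j}(\rho u)_h^n\,\overline{\Phi}_h\,\partial_r\delta\,r^2\,\mathrm{d}r$ in \eqref{eq:conservative-source-term-3} drops, and the factor $\Delta t$ cancels the $1/\Delta t$ in the discrete mass derivative, so the cell increment of $E_h$ reduces to boundary flux terms plus exactly $-\int_{K_j}(\rho_h^{n+1}-\rho_h^n)\overline{\Phi}_h\,r^2\,\mathrm{d}r$. Adding the second piece of part $\mathrm{I}$ annihilates this volume integral identically, leaving only the boundary contribution $-\Delta t\,\bigl[(\hat f^{*,n,[3]}+\hat f^{*,n,[1]}\overline{\Phi}_h)\,r^2\bigr]\big|_{r_{j-1/2}^+}^{r_{j+1/2}^-}$. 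This is the discrete counterpart of the first flux group in Proposition \ref{prop-semi}, and the exact cancellation of the volume integral is the reason the backward difference $(\rho^{n+1}-\rho^n)/\Delta t$ was chosen in the source.

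For part $\mathrm{II}$ I would invoke the exactness of the discrete Poisson solve \eqref{eq:prop-ef-1}, which holds at both $t^n$ and $t^{n+1}$; by linearity it then holds for $\overline{\Phi}_h$ and for $\Delta\Phi_h$, giving $4\pi G(\rho_h^{n+1}-\rho_h^n)r^2=\partial_r(r^2\partial_r\Delta\Phi_h)$ and $4\pi G\,\overline{\rho}_h\,r^2=\partial_r(r^2\partial_r\overline{\Phi}_h)$ with $\overline{\rho}_h=(\rho_h^{n+1}+\rho_h^n)/2$. Substituting these into the part-$\mathrm{II}$ integrand and integrating by parts exactly as in Section \ref{sec2.3} makes the symmetric volume integral $\int_{K_j}r^2\,\partial_r\overline{\Phi}_h\,\partial_r\Delta\Phi_h\,\mathrm{d}r$ cancel, leaving the boundary term $\tfrac1{8\pi G}\bigl[r^2(\partial_r\overline{\Phi}_h\,\Delta\Phi_h-\overline{\Phi}_h\,\partial_r\Delta\Phi_h)\bigr]\big|_{r_{j-1/2}^+}^{r_{j+1/2}^-}$, which matches the second flux group of Proposition \ref{prop-semi}.

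Finally I would sum both parts over $j=1,\dots,N$. Since the numerical fluxes $\hat f^{*,n,[1]}$ and $\hat f^{*,n,[3]}$ are single-valued at each interface and both $\Phi_h$ and $\partial_r\Phi_h$ are continuous across interfaces by the construction \eqref{eq:integral-poisson}--\eqref{eq:integral-poisson2}, every interior boundary contribution telescopes to zero. The inner endpoint $r_{1/2}=0$ contributes nothing because every surviving term carries the factor $r^2$, and the outer endpoint $r=R$ vanishes under the stated conditions $\hat f^{*,n,[3]}_{N+1/2}=0$ and $\Phi_h^n(R)=\Phi_h^{n+1}(R)=0$, the latter forcing $\overline{\Phi}_h(R)=\Delta\Phi_h(R)=0$. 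Hence the total increment is zero, which is exactly \eqref{eq:total-energy-ef}. The main obstacle I anticipate is part $\mathrm{II}$: one must ensure the discrete Poisson identity \eqref{eq:prop-ef-1} is exact rather than merely high-order accurate, so that the integration by parts closes with no residual volume term, and that continuity of $\partial_r\Phi_h$ (not only of $\Phi_h$) holds at interfaces, since the telescoping of the $r^2\partial_r\overline{\Phi}_h\,\Delta\Phi_h$ boundary term depends on it. The identification of the $\overline{\Phi}_h$ produced by the algebraic splitting with the $\overline{\Phi}_h$ prescribed in the scheme is the design choice that makes part $\mathrm{I}$ work and should be flagged as the crux.
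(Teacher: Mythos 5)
Your proposal is correct and follows essentially the same route as the paper's proof: your parts $\mathrm{I}$ and $\mathrm{II}$ coincide exactly with the paper's splitting (the product identity just repackages it), part $\mathrm{I}$ is handled identically via the test function $\boldsymbol{v}=(0,0,1)^T$ and the cancellation built into the choice $(\rho^{n+1}-\rho^n)/\Delta t$ and $\Phi_h^{n+\frac12}$, and part $\mathrm{II}$ via the exact discrete Poisson relation \eqref{eq:prop-ef-1} and integration by parts, followed by the same telescoping and boundary argument. Your formulation in terms of $\overline{\Phi}_h$ and $\Delta\Phi_h$ is algebraically equivalent to the paper's direct use of $\Phi_h^n$ and $\Phi_h^{n+1}$, and the continuity of $\Phi_h$ and $\partial_r\Phi_h$ that you flag is guaranteed by the cell-by-cell construction \eqref{eq:DG-poisson3}--\eqref{eq:DG-poisson4}, as the paper notes.
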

\begin{proof}
	The main structure of the proof is similar to that of the semi-discrete method in Proposition \ref{prop-semi}, with more terms due to the temporal discretization. In each cell $K_j$, we take the difference of the total energy in \eqref{eq:total-energy-ef} and separate it into two parts:
	\begin{align}
		&\int_{K_j}\left(\frac12\rho_h^{n+1}\,\Phi_h^{n+1}-\frac12\rho_h^{n}\,\Phi_h^{n}\right)r^2\mathrm{d}r+\int_{K_j}\left(E_h^{n+1}-E_h^n\right)r^2\mathrm{d}r\nonumber\\
		&\qquad:=\text{\RNum{1}}+\text{\RNum{2}}\label{eq:prop-ef--1},
	\end{align}
	with
	\begin{align}
		&\text{\RNum{1}}=\int_{K_j}\frac12\left(\rho_h^{n+1}-\rho_h^n\right)\left(\Phi_h^{n+1}+\Phi_h^{n}\right)r^2\mathrm{d}r+\int_{K_j}\left(E_h^{n+1}-E_h^n\right)r^2\mathrm{d}r, \\
		&\text{\RNum{2}}=\int_{K_j}\frac12\left(-\rho_h^{n+1}\,\Phi_h^{n}+\rho_h^n\Phi_h^{n+1}\right)r^2\mathrm{d}r.
	\end{align}
	
	Let us introduce the notation:
	\begin{equation}
		\Phi_h^{n+\frac12}=\frac{\Phi_h^{n+1}+\Phi_h^n}{2}.
	\end{equation}
	We note that $\hat{\boldsymbol{f}}^{*,n}$, $\frac{\partial \Phi_h^n}{\partial r}$, and $\Phi_h^n$ are single-valued in our schemes. By setting the test function $\boldsymbol{v}=(0,0,1)^T$ in \eqref{eq:fully-scheme-ef}, we can derive 
	\begin{align}
		\int_{K_j}E_h^{n+1}\,r^2\mathrm{d}r 
		&=\int_{K_j}E_h^{n}\,r^2\mathrm{d}r
		-\int_{K_j}\left(\rho_h^{n+1}-\rho_h^n\right)\Phi_h^{n+\frac12}\,r^2\mathrm{d}r \notag \\
		&\quad
		-\Delta t\left.\left(r^2\left(\hat{f}^{*,n,[3]}+\hat{f}^{*,n,[1]}\Phi_h^{n+\frac12}\right)\right)\right|_{r_{j-\frac12}}^{r_{j+\frac12}}, 
	\end{align}
	where $\hat{f}^{*,[i]}$ is the $i$-th component of the numerical flux $\hat{\boldsymbol{f}}^*$.		
	Therefore, we can simplify the term \RNum{1} as 
	\begin{align}
		& \text{\RNum{1}}
		=-\Delta t\left.\left(r^2\left(\hat{\boldsymbol{f}}^{*,n,[3]}+\hat{\boldsymbol{f}}^{*,n,[1]}\Phi_h^{n+\frac12}\right)\right)\right|_{r_{j-\frac12}}^{r_{j+\frac12}}\label{eq:prop-ef-4}.
	\end{align}
	
	Following the equality \eqref{eq:prop-ef-1} in the evaluation of $\Phi_h$, we have
	\begin{align}
		&4\pi\,G\int_{K_j}\rho_h^{n+1}\,\Phi_h^n\,r^2\mathrm{d}r=\int_{K_j}\frac{\partial}{\partial r}\left(r^2\frac{\partial \Phi_h^{n+1}}{\partial r}\right)\Phi_h^n\mathrm{d}r\nonumber\\
		&\hskip1.2cm=\left.\left(r^2\frac{\partial \Phi_h^{n+1}}{\partial r}\Phi_h^n\right)\right|_{r_{j-\frac12}}^{r_{j+\frac12}}-\int_{K_j}\frac{\partial \Phi_h^{n+1}}{\partial r}\frac{\partial \Phi_h^{n}}{\partial r}\,r^2\mathrm{d}r, \\
		&4\pi\,G\,\int_{K_j}\rho_h^{n}\,\Phi_h^{n+1}\,r^2\mathrm{d}r=\int_{K_j}\frac{\partial}{\partial r}\left(r^2\frac{\partial \Phi_h^{n}}{\partial r}\right)\Phi_h^{n+1}\mathrm{d}r\nonumber\\
		&\hskip1.2cm=\left.\left(r^2\frac{\partial \Phi_h^{n}}{\partial r}\Phi_h^{n+1}\right)\right|_{r_{j-\frac12}}^{r_{j+\frac12}}-\int_{K_j}\frac{\partial \Phi_h^{n}}{\partial r}\frac{\partial \Phi_h^{n+1}}{\partial r}\,r^2\mathrm{d}r.
	\end{align}
	Therefore, we can simplify term \RNum{2} as 
	\begin{equation}\label{eq:prop-ef-3}
		\text{\RNum{2}}
		=\frac{1}{8\pi\,G}\left.\left(r^2\frac{\partial \Phi_h^{n}}{\partial r}\Phi_h^{n+1}
		-r^2\frac{\partial \Phi_h^{n+1}}{\partial r}\Phi_h^n\right)\right|_{r_{j-\frac12}}^{r_{j+\frac12}}.
	\end{equation}
	
	We combine Eqs. \eqref{eq:prop-ef--1}-\eqref{eq:prop-ef-4} and sum over all the cells $K_j$ to obtain
	\begin{align}
		&\int_{\Omega}\left(E_h^{n+1}+\frac12\rho_h^{n+1}\,\Phi_h^{n+1}\right)r^2\mathrm{d}r-\int_{\Omega}\left(E_h^{n}+\frac12\rho_h^{n}\,\Phi_h^{n}\right)r^2\mathrm{d}r\nonumber\\
		&\qquad=\sum_{j=1}^{N} \frac{1}{8\pi\,G}\left.\left(r^2\frac{\partial \Phi_h^{n}}{\partial r}\Phi_h^{n+1}
		-r^2\frac{\partial \Phi_h^{n+1}}{\partial r}\Phi_h^n\right)\right|_{r_{j-\frac12}}^{r_{j+\frac12}}\nonumber\\
		&\qquad\quad-\Delta t\left.\left(r^2\left(\hat{\boldsymbol{f}}^{*,n,[3]}+\hat{\boldsymbol{f}}^{*,n,[1]}\Phi_h^{n+\frac12}\right)\right)\right|_{r_{j-\frac12}}^{r_{j+\frac12}}\nonumber\\
		&\qquad=\left.\frac{1}{8\pi\,G}\left(r^2\frac{\partial \Phi_h^{n}}{\partial r}\Phi_h^{n+1}-r^2\frac{\partial \Phi_h^{n+1}}{\partial r}\Phi_h^n\right)\right|_{0}^{R}\nonumber\\
		&\qquad\quad\left.-\Delta t\left(r^2\left(\hat{\boldsymbol{f}}^{*,n,[3]}+\hat{\boldsymbol{f}}^{*,n,[1]}\Phi_h^{n+\frac12}\right)\right)\right|_{0}^{R}\nonumber\\
		&\qquad=~0,
	\end{align}
	where the last equality is due to the outer boundary condition $\Phi_h^n(R)=\Phi_h^{n+1}(R)=\Phi_h^{n+\frac12}(R)=0$ and $\hat{f}_{N+\frac12}^{*,n,[3]}=0$. Therefore, the fully discrete forward Euler DG scheme \eqref{eq:fully-scheme-ef} has the total energy conservation property.  
\end{proof}

\begin{remark}	
	The assumptions on the outer boundary condition (i.e., $\Phi_h^n(R)=\Phi_h^{n+1}(R)=0$ and $\hat{\boldsymbol{f}}_{N+\frac12}^{*,n,[3]}=0$) are only used in the last equality of the proof. We use these assumptions for ease of presentation. The total energy conservation property of our numerical methods does not depend on these assumptions. In Section \ref{exam_yahil}, we consider a numerical example without the assumption $\hat{\boldsymbol{f}}_{N+\frac12}^{*,n,[3]}=0$, and observe conservation of total energy, after adding correction terms due to the outer boundary.  We can deal with the case without the assumption $\Phi_h^n(R)=\Phi_h^{n+1}(R)=0$ in a similar way by adding correction term. We refer to Section \ref{exam_yahil} for the details on these correction terms and the numerical observation.
\end{remark}

\begin{remark}
	We note that our proposed scheme \eqref{eq:fully-scheme-ef} still has the well-balanced property. The only thing to check is that the source term approximation $\mathcal{S}_j^{[3],\text{\rm tec}}=0$ holds at the steady state. This holds due to the fact that $\hat{f}^{*,n,[1]}=0$, $u_h^n=0$, and also $\rho_h^n=\rho_h^{n+1}$ by updating the density equation with the well-balanced DG method at the steady state.  
\end{remark}

\subsubsection{High-order Runge-Kutta time discretization}\label{sec:full-high-conserve}

In this section, we will extend the well-balanced and total-energy-conserving method \eqref{eq:fully-scheme-ef} coupled with forward Euler discretization to high-order RK discretization. In \cite{mullen2021extension}, the fully discrete energy conserving schemes with second- and third-order RK time discretization are introduced in the context of finite difference methods. The key idea is to use different source term approximations for each 
stage of the Runge--Kutta method, and a similar idea will be explored here. Comparing with the RK methods in \cite{mullen2021extension} and this paper, the main difference is that we involve additional terms, such as the approximation of $\frac{\partial \rho}{\partial t}$. This is because our DG schemes include test functions and the relationship between the variables $\boldsymbol{u}$ is more complicated.

Let us start with the second-order RK method. For the differential equation of the general form
$w_t = \mathcal{L}(w)$, a second-order RK method can be formulated as	
\begin{align} 
	w^{(1)} &= w^n + \Delta t\,\mathcal{L}(w^n), \notag \\
	w^{n+1}&=w^n + \frac12\left( w^{(1)} +\Delta t\,\mathcal{L}(w^{(1)})\right)\nonumber\\
	&=w^n + \Delta t\left(\frac{\mathcal{L}(w^n)+\mathcal{L}(w^{(1)})}{2}\right). \label{eq:2ndRK}
\end{align}
Starting from the forward Euler method \eqref{eq:fully-scheme-ef}, the fully discrete total-energy conserving scheme with second-order RK method \eqref{eq:2ndRK} is given by 
\begin{align}
	&\int_{K_j}\boldsymbol{u}_h^{(1)}\cdot\boldsymbol{v}\,r^2\mathrm{d}r\nonumber\\
	&\qquad=\int_{K_j}\boldsymbol{u}_h^n\cdot\boldsymbol{v}\,r^2\mathrm{d}r+\Delta t\Bigg(\mathcal{F}_j(\boldsymbol{u}_h^n,\boldsymbol{v})+\mathcal{S}_j^{[2],\text{wb}}(\boldsymbol{u}_h^n,\boldsymbol{v})\nonumber\\
	&\qquad\quad+\mathcal{S}_j^{[3],\text{tec}}\left(\boldsymbol{u}_h^n,\hat{\boldsymbol{f}}^{*,n},\frac{\rho^{(1)}-\rho^n}{\Delta t},\Phi_h^{(0,1)},\delta\right)\Bigg),\label{eq:fully-scheme-2nd-0}\\
	&\int_{K_j}\boldsymbol{u}_h^{n+1}\cdot\boldsymbol{v}\,r^2\mathrm{d}r\nonumber\\
	&\qquad=\int_{K_j}\boldsymbol{u}_h^n\cdot\boldsymbol{v}\,r^2\mathrm{d}r+\Delta t\Bigg(\frac{\mathcal{F}_j(\boldsymbol{u}_h^n,\boldsymbol{v})+\mathcal{F}_j(\boldsymbol{u}_h^{(1)},\boldsymbol{v})}{2}\nonumber\\
	&\qquad\quad+\frac{\mathcal{S}_j^{[2],\text{wb}}(\boldsymbol{u}_h^n,\boldsymbol{v})+\mathcal{S}_j^{[2],\text{wb}}(\boldsymbol{u}_h^{(1)},\boldsymbol{v})}{2}\nonumber\\
	&\qquad\quad+\mathcal{S}_j^{[3],\text{tec}}\left(\boldsymbol{u}_h^{(0,1)},\hat{\boldsymbol{f}}^{*,(0,1)},\frac{\rho^{n+1}-\rho^n}{\Delta t},\Phi_h^{(0,2)},\delta\right)\Bigg),\label{eq:fully-scheme-2nd}
\end{align}
where we introduced the following notations 
\begin{align}
	&\hat{\boldsymbol{f}}^{*,(0,1)}=\frac12\left(\hat{\boldsymbol{f}}^{*,n}+\hat{\boldsymbol{f}}^{*,(1)}\right), \qquad 
	\boldsymbol{u}^{(0,1)}=\frac12\left(\boldsymbol{u}_h^n+\boldsymbol{u}_h^{(1)}\right),\nonumber \\
	&\Phi_h^{(0,1)}=\frac12\left(\Phi_h^n+\Phi_h^{(1)}\right), \qquad \Phi_h^{(0,2)}=\frac12\left(\Phi_h^n+\Phi_h^{n+1}\right).
\end{align}

The third-order strong-stability-preserving RK method for $w_t = \mathcal{L}(w)$ can be formulated as
\begin{align}
	w^{(1)} &= w^n + \Delta t\,\mathcal{L}(w^n),\nonumber\\
	w^{(2)} &= \frac34 w^n + \frac14\left(w^{(1)} + \Delta t\,\mathcal{L}(w^{(1)})\right)\nonumber\\
	&=w^n + \frac{\Delta t}{2}\left(\frac{\mathcal{L}(w^n)+\mathcal{L}(w^{(1)})}{2}\right),\nonumber\\
	w^{n+1} &=\frac13 w^n + \frac23\left(w^{(2)}+\Delta t\,\mathcal{L}(w^{(2)})\right)\nonumber\\
	&=w^n+\Delta t\left(\frac{\mathcal{L}(w^n)+\mathcal{L}(w^{(1)})+4\mathcal{L}(w^{(2)})}{6}\right).\label{eq:standard-rk3}
\end{align}
The fully discrete total-energy conserving scheme with this third-order RK method is given by 
\begin{align}
	&\int_{K_j}\boldsymbol{u}_h^{(1)}\cdot\boldsymbol{v}\,r^2\mathrm{d}r=\int_{K_j}\boldsymbol{u}_h^n\cdot\boldsymbol{v}\,r^2\mathrm{d}r\nonumber\\
	&\qquad\quad+\Delta t\Bigg(\mathcal{F}_j(\boldsymbol{u}_h^n,\boldsymbol{v})+\mathcal{S}_j^{[2],\text{wb}}(\boldsymbol{u}_h^n,\boldsymbol{v})\nonumber\\
	&\qquad\quad+\mathcal{S}_j^{[3],\text{tec}}\left(\boldsymbol{u}_h^n,\hat{\boldsymbol{f}}^{*,n},\frac{\rho^{(1)}-\rho^n}{\Delta t},\Phi_h^{n+\frac12},\delta\right)\Bigg),\label{eq:fully-scheme-3rd-0}\\
	&\int_{K_j}\boldsymbol{u}_h^{(2)}\cdot\boldsymbol{v}\,r^2\mathrm{d}r=\int_{K_j}\boldsymbol{u}_h^n\cdot\boldsymbol{v}\,r^2\mathrm{d}r\nonumber\\
	&\qquad\quad+\frac{\Delta t}{2}\Bigg(\frac{\mathcal{F}_j(\boldsymbol{u}_h^n,\boldsymbol{v})+\mathcal{F}_j(\boldsymbol{u}_h^{(1)},\boldsymbol{v})}{2}\nonumber\\
	&\qquad\quad+\frac{\mathcal{S}_j^{[2],\text{wb}}(\boldsymbol{u}_h^n,\boldsymbol{v})+\mathcal{S}_j^{[2],\text{wb}}(\boldsymbol{u}_h^{(1)},\boldsymbol{v})}{2}\nonumber\\
	&\qquad\quad+\mathcal{S}_j^{[3],\text{tec}}\left(\boldsymbol{u}_h^{(0,1)},\hat{\boldsymbol{f}}^{*,(0,1)},\frac{\rho^{(2)}-\rho^n}{\Delta t/2},\Phi_h^{(0,2)},\delta\right)\Bigg),\\
	&\int_{K_j}\boldsymbol{u}_h^{n+1}\cdot\boldsymbol{v}\,r^2\mathrm{d}r=\int_{K_j}\boldsymbol{u}_h^n\cdot\boldsymbol{v}\,r^2\mathrm{d}r\nonumber\\
	&\qquad\quad+\Delta t\Bigg(\frac{\mathcal{F}_j(\boldsymbol{u}_h^n,\boldsymbol{v})+\mathcal{F}_j(\boldsymbol{u}_h^{(1)},\boldsymbol{v})+4\mathcal{F}_j(\boldsymbol{u}_h^{(2)},\boldsymbol{v})}{6}\nonumber\\
	&\qquad\quad+\frac{\mathcal{S}_j^{[2],\text{wb}}(\boldsymbol{u}_h^n,\boldsymbol{v})+\mathcal{S}_j^{[2],\text{wb}}(\boldsymbol{u}_h^{(1)},\boldsymbol{v})+4\mathcal{S}_j^{[2],\text{wb}}(\boldsymbol{u}_h^{(2)},\boldsymbol{v})}{6}\nonumber\\
	&\qquad\quad+\mathcal{S}_j^{[3],\text{tec}}\left(\boldsymbol{u}_h^{(0,2)},\hat{\boldsymbol{f}}^{*,(0,2)},\frac{\rho^{n+1}-\rho^n}{\Delta t},\Phi_h^{(0,3)},\delta\right)\Bigg),\label{eq:fully-scheme-3rd}
\end{align}
with the following notations 
\begin{align}
	&\hat{\boldsymbol{f}}^{*,(0,2)}=\frac16\left(\hat{\boldsymbol{f}}^{*,n}+\hat{\boldsymbol{f}}^{*,(1)}+4\hat{\boldsymbol{f}}^{*,(2)}\right), \\
	&\boldsymbol{u}^{(0,2)}=\frac16\left(\boldsymbol{u}_h^n+\boldsymbol{u}_h^{(1)}+4\boldsymbol{u}_h^{(2)}\right), \qquad 
	\Phi_h^{(0,3)}=\frac12\left(\Phi_h^n+\Phi_h^{n+1}\right).\nonumber
\end{align}

Note that different source term approximations of $\mathcal{S}_j^{[3],\text{tec}}$ are employed in the each 
stage of the RK method, in order to simultaneously achieve the total energy conservation property and high-order accuracy. The proofs of the well-balanced property and total energy conservation of the high-order RKDG methods \eqref{eq:fully-scheme-2nd-0}-\eqref{eq:fully-scheme-2nd} and \eqref{eq:fully-scheme-3rd-0}-\eqref{eq:fully-scheme-3rd} follow the exact same approach as that of the forward Euler DG scheme \eqref{eq:fully-scheme-ef}, and is omitted here to save space. 

\subsection{TVB limiter}

For problems containing strong discontinuities, oscillations may develop in the solutions obtained with DG methods, and in this case nonlinear limiters are needed after each 
stage of the Runge--Kutta methods to control these oscillations. One popular choice is the total variation bounded (TVB) limiter \cite{cockburn1989tvb}. Its extension to the system in spherically symmetrical coordinates has been considered in \cite{pochik2021thornado}, and will be employed here, provided some modifications to ensure the total-energy-conserving property. 

We start by defining two different cell averages of $\boldsymbol{u}_h$ in cell $K_j$: the standard and weighted cell averages given by
\begin{equation}\label{eq:tvd0}
	\bar{\boldsymbol{u}}_j=\frac{\int_{K_j}\boldsymbol{u}_h\,\mathrm{d}r}{\int_{K_j}1\,\mathrm{d}r}, \qquad\qquad 
	\tilde{\boldsymbol{u}}_j=\frac{\int_{K_j}\boldsymbol{u}_h\,r^2\,\mathrm{d}r}{\int_{K_j}r^2\,\mathrm{d}r},
\end{equation}
respectively. 
In cell $K_j$, the forward and backward slopes are defined as
\begin{align}
	\Delta\boldsymbol{u}_j^F=\frac{\bar{\boldsymbol{u}}_{j+1}-\bar{\boldsymbol{u}}_{j}}{r_{j+1}-r_j},&\qquad\qquad
	\Delta\boldsymbol{u}_{j}^B=\frac{\bar{\boldsymbol{u}}_{j}-\bar{\boldsymbol{u}}_{j-1}}{r_{j}-r_{j-1}},
\end{align}
where $r_j=(r_{j+\frac12}+r_{j-\frac12})/2$ denotes the midpoint of $K_j$. Then we apply the minmod function in \cite{cockburn1989tvb} to obtain
\begin{align}\label{eq:tvd0.5}
	&\tilde{\Delta}\boldsymbol{u}_{j}=\text{minmod}\left(\Delta\boldsymbol{u}_j,~\beta\Delta\boldsymbol{u}_{j}^F,~\beta\Delta\boldsymbol{u}_{j}^B\right),
\end{align}
where 
\begin{equation}
	\Delta\boldsymbol{u}_j=\frac{\boldsymbol{u}_{h,j+\frac12}^--\boldsymbol{u}_{h,j-\frac12}^+}{r_{j+\frac12}-r_{j-\frac12}},
\end{equation}
with $\beta$ being a constant to be specified. In \cite{pochik2021thornado}, it was shown that $\beta=1.75$ yields good results for a range of problems, and this value will also be used in this paper. If $\tilde{\Delta}\boldsymbol{u}_{j}$ and $\Delta\boldsymbol{u}_j$ are the same, this indicates that a limiter is not needed in this cell. When they are different, we mark this cell $K_j$ as a troubled cell. In such cell, we define a new linear polynomial $\tilde{\boldsymbol{u}}_{h,j}$ as
\begin{equation}\label{eq:tvd1}
	\tilde{\boldsymbol{u}}_{h,j}=\tilde{\boldsymbol{u}}_{j}^0+\tilde{\Delta}\boldsymbol{u}_{j}(r-r_j),\qquad 
	\tilde{\boldsymbol{u}}_{j}^0=\tilde{\boldsymbol{u}}_j-\tilde{\Delta}\boldsymbol{u}_{j}\frac{\int_{K_j}(r-r_j)\,r^2\,\mathrm{d}r}{\int_{K_j}r^2\,\mathrm{d}r},
\end{equation}
which has the updated slope $\tilde{\Delta}\boldsymbol{u}_{j}$ while keeping the same weighted cell average as $\tilde{\boldsymbol{u}}_j$. In the cells which are not marked as troubled cells, we simply set $\tilde{\boldsymbol{u}}_{h,j}=\boldsymbol{u}_{h,j}$. 
Finally, we replace the solution $\boldsymbol{u}_h$ by the updated solution $\tilde{\boldsymbol{u}}_{h}$ and continue the computation with the updated solution. This finishes the TVB limiter procedure. One can easily verify that the weighted cell average of $\tilde{\boldsymbol{u}}_{h,j}$ are the same as $\boldsymbol{u}_h$ in each computational cell, which yields the mass conservation property of the limiter procedure. 

Since the total energy depends nonlinearly on the variable $\rho_h$, this TVB limiter may destroy the total energy conservation property, which is satisfied by the proposed fully discrete method. To ensure the total-energy-conserving property, we slightly modify the TVB limiter on the variable $E_h$ as outlined below. Since the Euler--Poisson system does not conserve the non-gravitational energy $E$ in the PDE level, we propose an additional correction of $\tilde{E}_{h,j}$ as follows
\begin{equation}\label{eq:tvd2}
	\tilde{\tilde{E}}_{h,j}=\tilde{E}_{h,j}+\frac{\int_{K_j}\frac12(\rho_h\phi_h-\tilde{\rho}_h\tilde{\Phi}_h)\,r^2\,\mathrm{d}r}{\int_{K_j}r^2\,\mathrm{d}r},
\end{equation}
to ensure that the total energy $\int_{K_j} (E_h+\frac12\rho_h\Phi_h )~dr$ is not changed by the limiting procedure. 
Here $\tilde{\tilde{E}}_{h,j}$ is the updated numerical solution of $E$, $\tilde{E}_{h,j}$ is obtained in \eqref{eq:tvd1}, $\rho_h$ is the numerical solution before limiting, $\tilde{\rho}_h$ is the numerical solution after limiting, $\Phi_h$ and $\tilde{\Phi}_h$ are the gravitational potential calculated from $\rho_h$ and $\tilde{\rho}_h$ respectively. Note that $\tilde{\Phi}_h$ is evaluated after $\tilde{\rho}_h$ is available in all the cells, hence even though a cell $K_j$ is not marked as troubled cell, the value of $\tilde{\Phi}_h$ in this cell may be different from the original $\Phi_h$ due to modified $\tilde{\rho}_h$ in troubled cells in other locations. Therefore, this correction \eqref{eq:tvd2} will be applied for every cell regardless of being marked as troubled cells or not. 


The procedure of applying TVB limiter in each 
stage of Runge-Kutta method is summarized below, where the forward Euler time discretization is used for ease of presentation.
\begin{enumerate}
	\item At each time level $t^n$ (or 
	every intermediate stage of Runge-Kutta method), compute $\rho_h^{n+1}, (\rho u)_h^{n+1}$ for all cells $K_j$;
	\item Apply the TVB limiter to obtain $\tilde{\rho}_h^{n+1}, \widetilde{\rho u}^{n+1}$;
	\item Evaluate $\tilde{\Phi}_h^{n+1}$ based on the limited $\tilde{\rho}_h^{n+1}$;
	\item Compute $E_h^{n+1}$ (which employs the limited $\tilde{\rho}_h^{n+1}$ and $\tilde{\Phi}_h^{n+1}$) and apply TVB limiter with total-energy-conserving correction to $\tilde{\tilde{E}}_h^{n+1}$ (which involves both $\rho_h^{n+1}$, $\Phi_h^{n+1}$ and $\tilde{\rho}_h^{n+1}$, $\tilde{\Phi}_h^{n+1}$).
\end{enumerate}

\begin{remark}
	For the purpose of the well-balanced property, we use $\boldsymbol{u}_h-\boldsymbol{u}_h^e$ instead of $\boldsymbol{u}_h$ as an indicator to identify the troubled cells \cite{xing2014exactly}. If a cell is marked as a troubled cell, the update procedure is still applied on $\boldsymbol{u}_h$ as mentioned above. In the steady state, we have $\boldsymbol{u}_h-\boldsymbol{u}_h^e=0$, hence the TVB limiter will not take effect, and the well-balanced property will not be affected by the limiter. 
\end{remark}

\section{Numerical examples}\label{sec:example}


In this section, numerical examples will be provided to verify the properties of our proposed scheme, including the well-balanced property, total energy conservation properties and high-order accuracy. We use $P^2$ piecewise polynomial in the DG method and the third-order RK method \eqref{eq:fully-scheme-3rd-0}-\eqref{eq:fully-scheme-3rd} in the numerical tests, unless otherwise stated. The CFL number is set as 0.16 to determine the time step size. 

\subsection{Well-balanced and small perturbation tests}
\label{exam_well-balanced}

In this example, we consider a simple polytropic equilibrium and verify that our proposed scheme has the well-balanced property to maintain this equilibrium up to round-off error. We set $G=1/(4\pi)$ in this example, and choose two cases, $\gamma=2$ and $\gamma=1.2$, along with $\rho_0=1$ and $\kappa=1$. We have the following initial data
\begin{equation}
	\rho(r,0)=\frac{\sqrt{2}\sin(\frac{r}{\sqrt{2}})}{r},\quad \rho u(r,0)=0,\quad p(r,0)=\frac{2\sin^2(\frac{r}{\sqrt{2}})}{r^2},
\end{equation}
if $\gamma=2$, and 
\begin{equation}
	\rho(r,0)=(1+\frac{1}{18}r^2)^{-2.5},\quad \rho u(r,0)=0,\quad p(r,0)=(1+\frac{1}{18}r^2)^{-3},
\end{equation}
if $\gamma=1.2$, on the domain $\Omega=[0,1]$. The reflecting boundary condition is considered for the inner boundary and we set $\boldsymbol{u}^+(1)=\boldsymbol{u}^-(1)$ at the outer boundary. 
We set the stopping time $t = 4$ on the mesh with 200 uniform cells, and present the $L^1$ errors of the numerical solutions in Table \ref{table:1}, where both single and double precisions have been considered in the simulation. We can see that errors stay at the level of round-off errors for different precision, which verify the desired well-balanced property.
\begin{table}[h]
	\centering
	\caption{Example \ref{exam_well-balanced}, $L^1$ error of the numerical solutions for different precision in the well-balanced test.}
\begin{tabular}{c c c c c}
	\toprule
	Case & Precision & $\rho$ & $\rho u$ & $E$\\
	\midrule
	\multirow{2}{*}{$\gamma=2$} & double & 3.89E-13 & 2.70E-15 & 6.52E-14 \\
								& quad & 3.55E-31 & 3.44E-33 & 5.94E-32\\
	\midrule
	\multirow{2}{*}{$\gamma=1.2$} & double & 6.75E-13 & 8.00E-15 & 6.31E-13 \\
								  & quad & 6.04E-31 & 8.00E-33 & 5.74E-31 \\
	\bottomrule
\end{tabular}
	
	\label{table:1}
\end{table}

%

Next, we show the advantage of our proposed scheme in capturing a small perturbation to the equilibrium state. The initial data is given by imposing a pressure perturbation to the $\gamma=2$ equilibrium 
\begin{align}
	&\rho(r,0)=\frac{\sqrt{2}\sin(\frac{r}{\sqrt{2}})}{r},\quad \rho u(r,0)=0,\nonumber\\
	&p(r,0)=\frac{2\sin^2(\frac{r}{\sqrt{2}})}{r^2}+A\exp(-100r^2),\label{eq:exam1}
\end{align}
on the domain $\Omega=[0,0.5]$. The pressure is perturbed by a Gaussian bump of amplitude $A=10^{-6}$ in this test. We compute the solutions until $t=0.2$. A reference solution is computed with $N=400$ for comparison. We plot the velocity and pressure perturbation for $N=100$ in Figure \ref{fig:small-perturbation}, compared with the numerical solution of the non-well-balanced DG scheme from Section \ref{sec:convention}, and the reference solution. From the figures, we can see that the well-balanced scheme resolves the perturbation much better on a relatively coarse mesh. Similar test under the framework of finite difference methods in three dimensions can also be found in \cite{kappeli2014well}.

\begin{figure*}
	\centering
	\begin{subfigure}[pressure perturbation of wb]{\label{fig:wb-p}	
			\includegraphics[width = .45\linewidth]{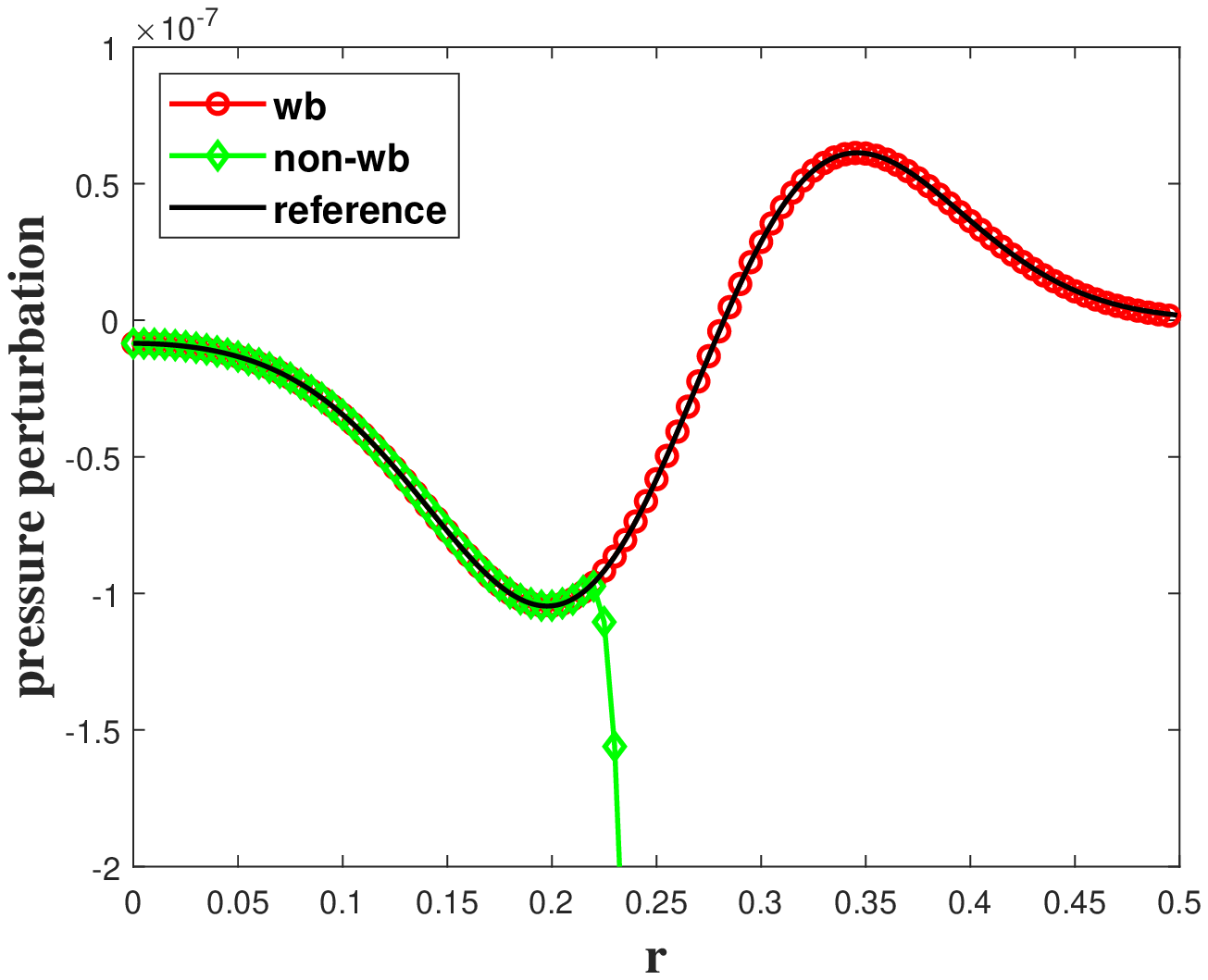}}
	\end{subfigure}
	\begin{subfigure}[pressure perturbation of non-wb]{\label{fig:non-wb-p}	
			\includegraphics[width = .45\linewidth]{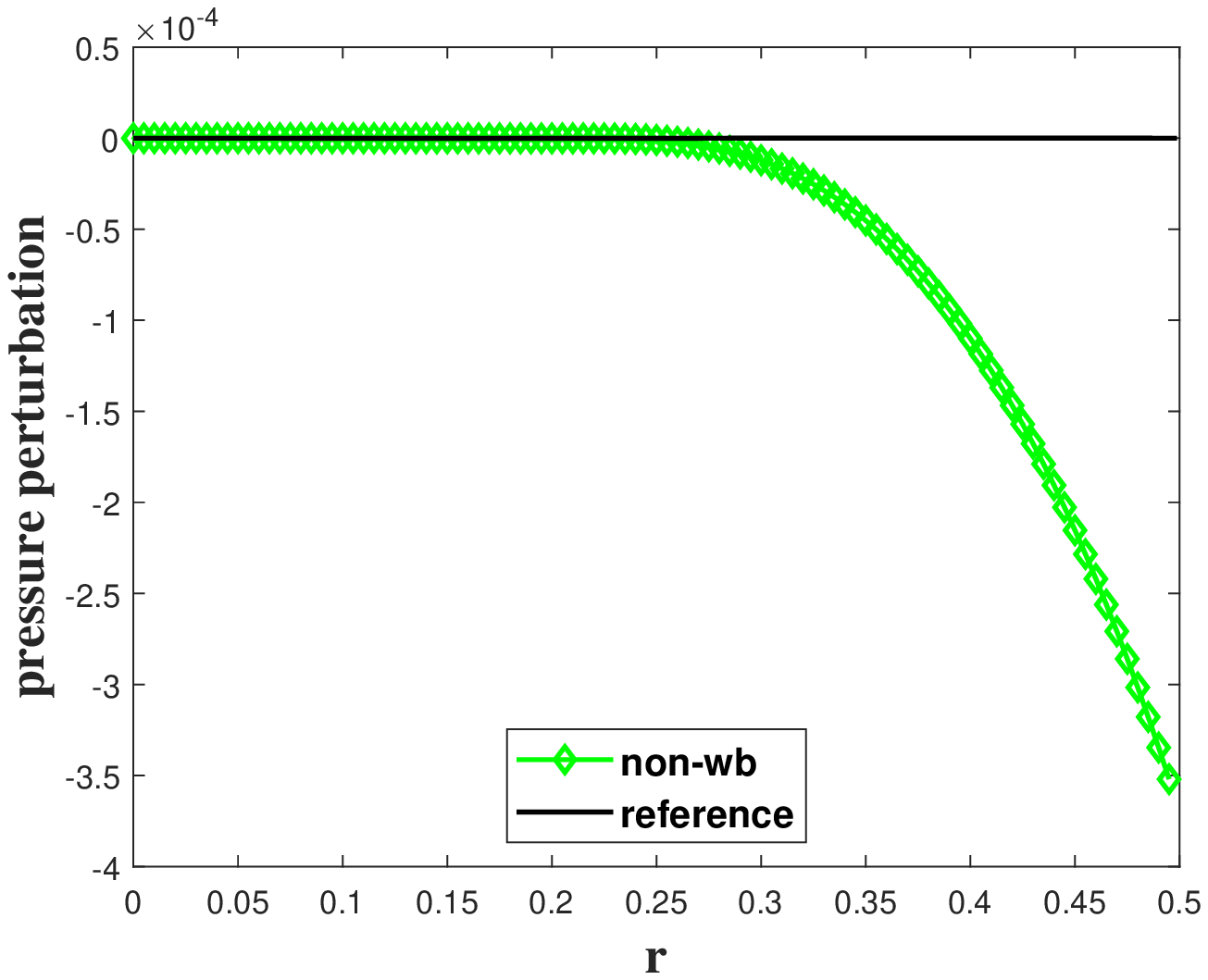}}
	\end{subfigure} \\
	\begin{subfigure}[velocity of wb]{\label{fig:wb-u}	
			\includegraphics[width = .45\linewidth]{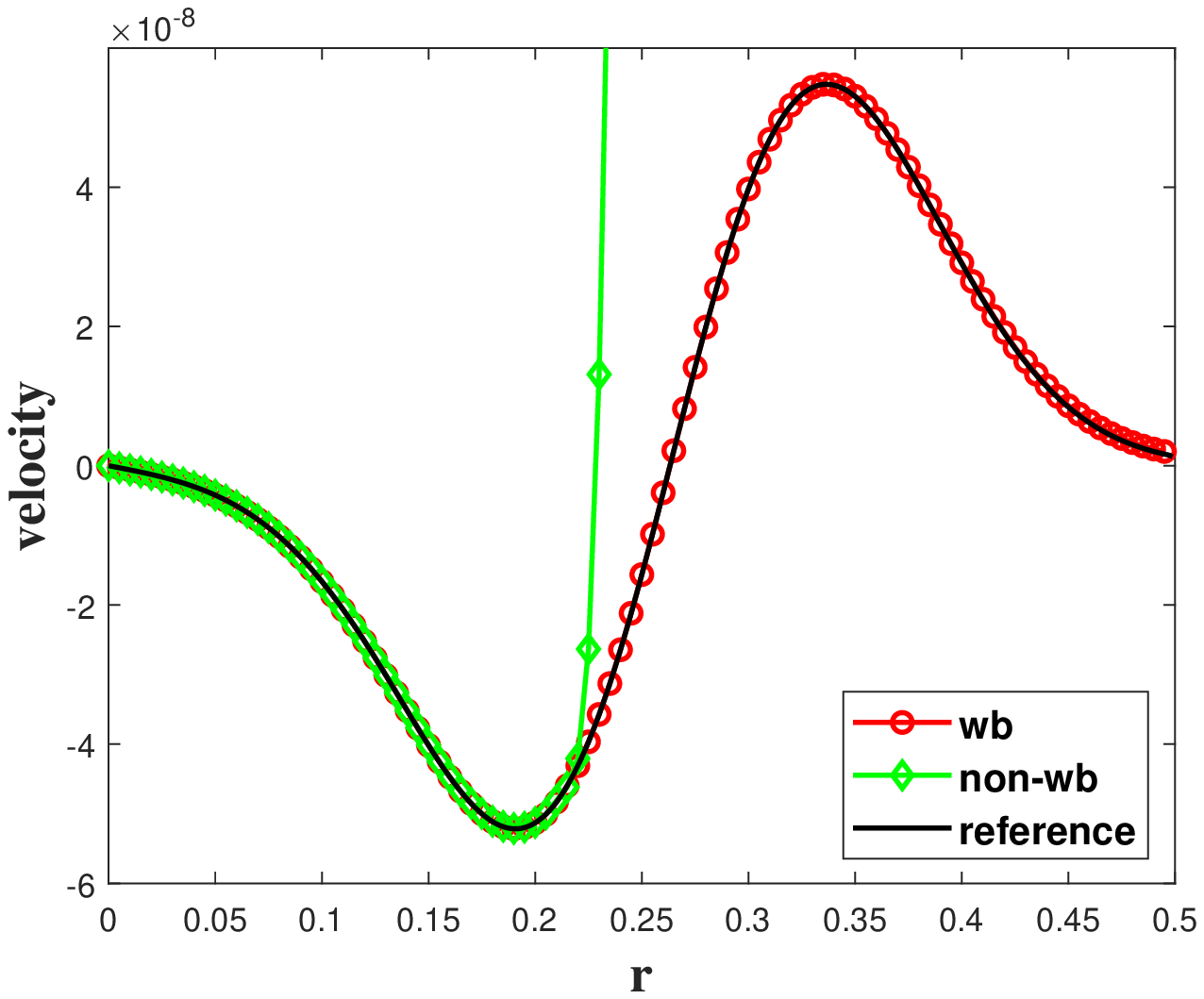}}
	\end{subfigure}
	\begin{subfigure}[velocity of non-wb]{\label{fig:non-wb-u}	
			\includegraphics[width = .45\linewidth]{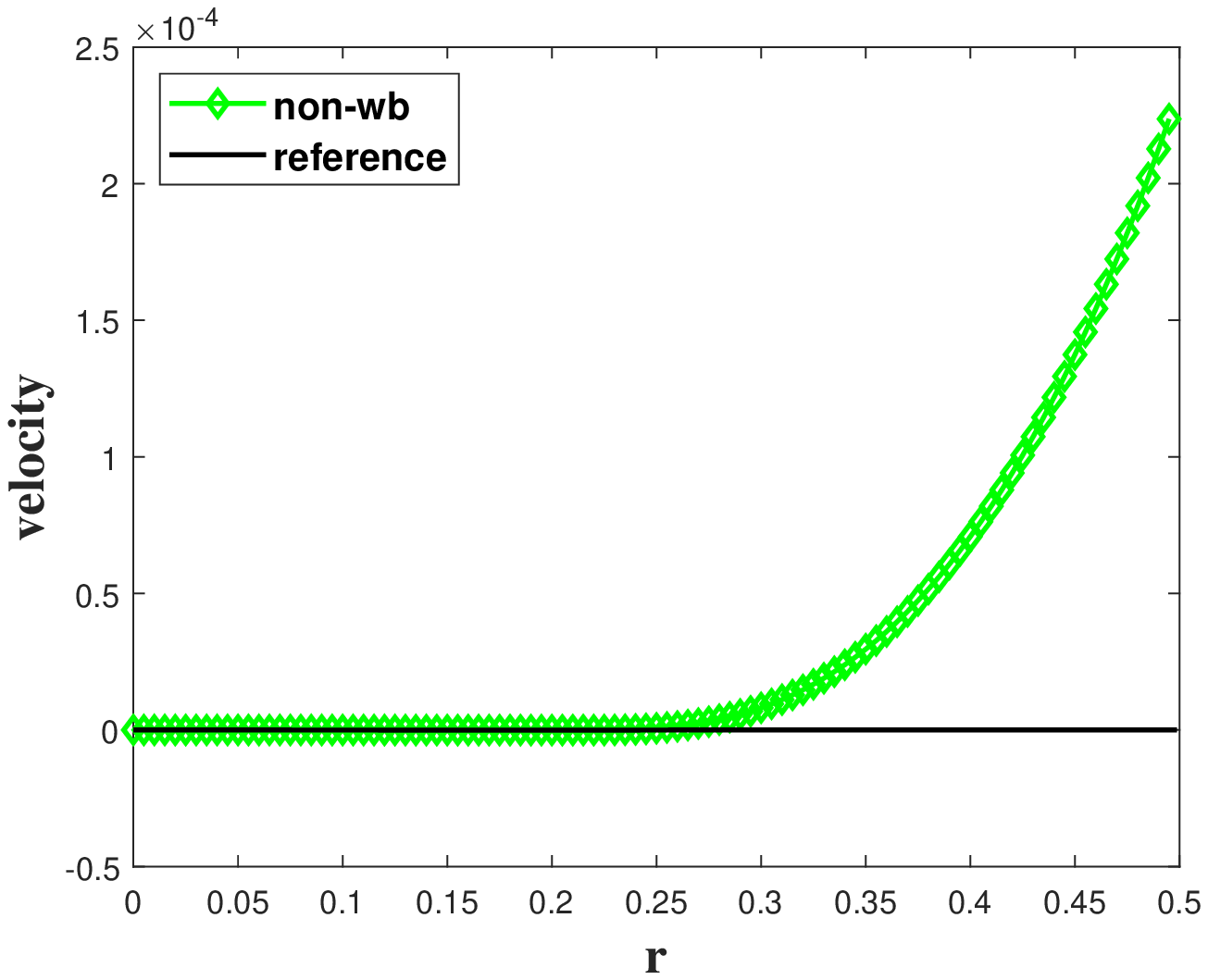}}
	\end{subfigure}
	\caption{Example \ref{exam_well-balanced}, numerical results at time $t=0.2$ for the small perturbation test. ``wb'' denotes the proposed DG scheme and ``non-wb'' denotes the standard DG scheme. The wb result is compared with non-wb result and the reference solution.}
	\label{fig:small-perturbation}
\end{figure*}

\subsection{Accuracy test}
\label{exam_accuracy}

\begin{enumerate}
    \item The accuracy test near the equilibrium state.
\end{enumerate}
In this example, we test the accuracy of the numerical solution near the equilibrium state and use the same initial condition in \eqref{eq:exam1} with parameter $A=0.001$. We set the domain $\Omega=[0,0.5]$, polynomial degree $k=2$ and stopping time $t=0.2$, same as those in Section \ref{exam_well-balanced}. Since the exact solution is unknown, we use the numerical solution of $N=640$ as a reference solution. The error table are shown in Table \ref{table:0}. We can observe the optimal convergence rate for all the variables. In addition, we also list the errors of the standard DG scheme \eqref{eq:scheme0} in Table \ref{table:00} for comparison. We observe that although both schemes have the optimal convergence order, the errors of our proposed scheme are much smaller than those of the standard scheme.

\begin{table}
	\caption{Example \ref{exam_accuracy}, accuracy test near the equilibrium state for $k=2$ with our proposed third-order RKDG scheme \eqref{eq:fully-scheme-3rd-0}-\eqref{eq:fully-scheme-3rd}.}
\small
\begin{tabular}{c c c c c c c}
	\toprule
	$N$ & \multicolumn{2}{c}{$\rho$} & \multicolumn{2}{c}{$\rho u$} & \multicolumn{2}{c}{$E$}\\
	\midrule
	10 & 2.62E-07 & - & 1.63E-07 & - & 2.23E-07 & - \\
	20 & 3.09E-08 & 3.08 & 1.71E-08 & 3.25 & 2.41E-08 & 3.21 \\
	40 & 3.73E-09 & 3.05 & 2.16E-09 & 2.98 & 3.08E-09 & 2.97 \\
	80 & 4.48E-10 & 3.06 & 2.97E-10 & 2.86 & 4.24E-10 & 2.86 \\
   	\bottomrule
\end{tabular}
	\label{table:0}
\end{table}

\begin{table}
	\caption{Example \ref{exam_accuracy}, accuracy test near the equilibrium state for $k=2$ with the standard DG scheme \eqref{eq:scheme0} and third-order RKDG time discretization \eqref{eq:standard-rk3}}
\small
\begin{tabular}{c c c c c c c}
	\toprule
	$N$ & \multicolumn{2}{c}{$\rho$} & \multicolumn{2}{c}{$\rho u$} & \multicolumn{2}{c}{$E$}\\
	\midrule
	10 & 1.84E-04 & - & 1.48E-04 & - & 2.19E-04 & - \\
	20 & 2.62E-05 & 2.81 & 2.03E-05 & 2.87 & 2.16E-05 & 3.34 \\
	40 & 3.35E-06 & 2.97 & 2.56E-06 & 2.99 & 3.96E-06 & 2.87 \\
	80 & 4.34E-07 & 2.95 & 3.33E-07 & 2.94 & 4.25E-07 & 2.80 \\
   	\bottomrule
\end{tabular}
	\label{table:00}
\end{table}

\begin{enumerate}
    \item[(ii)] The accuracy test far away from the equilibrium state.
\end{enumerate}
In this example, we provide an accuracy test for solutions far away from the equilibrium state, to test the high-order convergence rate of the DG methods. We consider the following ``manufactured'' exact solutions 
\begin{equation}
\rho(r,t)=\frac{\exp(t-r)}{r^2},\quad u(r,t)=1,\quad p(r,t)=\frac{1}{r^2}.
\end{equation}
As a result, the Euler--Poisson equations \eqref{eq:problem} becomes
\begin{equation}
	\label{eq:change}
	\frac{\partial\boldsymbol{u}}{\partial t}+\frac{1}{r^2}\frac{\partial}{\partial r}(r^2\boldsymbol{f}(\boldsymbol{u}))=\boldsymbol{s}(\boldsymbol{u},\Phi)+\boldsymbol{w}(r),
\end{equation}
with an additional source term $\boldsymbol{w}(r)$ given by
\begin{equation}
	\boldsymbol{w}(r)=\left(0,-\frac{\exp(2(t-r))+2r}{r^4},-\frac{\exp(2(t-r))}{r^4}\right)^T.
\end{equation}
In this test, we set $\gamma=2$, $G=1/(4\pi)$, the computational domain is $\Omega=[0.5,1]$, and the stopping time is set to $t=0.1$. The exact solution is used to provide the boundary condition for the Euler equations, and the boundary condition for the Poisson equation is set as
\begin{equation}
	\frac{\partial \Phi_h}{\partial r}(0.5)=-4\exp(t-0.5),\quad\Phi_h(0.5)=0.
\end{equation}
Since our computational domain does not contain the origin $r=0$, our approach of recovering the reference equilibrium state $\boldsymbol{u}^d$ needs an additional boundary condition instead of \eqref{eq:lane-emden-boundary2}. For simplicity, we skip the steps of recovering the reference state in Section \ref{sec:recovery} and set a global steady state $\boldsymbol{u}^d$ explicitly for all cells without using \eqref{eq:reference_operator}:
\begin{equation}
	\rho^d(r)=\frac{\sqrt{2}\sin(\frac{r}{\sqrt{2}})}{r},\quad u^d(r)=0,\quad p^d(r)=\frac{2\sin^2(\frac{r}{\sqrt{2}})}{r^2}.
\end{equation}
We have performed the simulations for various mesh size $N$. The results for $k=1$ with the second-order RKDG scheme \eqref{eq:fully-scheme-2nd-0}-\eqref{eq:fully-scheme-2nd} and $k=2$ with the third-order RKDG scheme \eqref{eq:fully-scheme-3rd-0}-\eqref{eq:fully-scheme-3rd} are shown in Table \ref{table:2}. We can observe the optimal convergence rate for all the variables and $k=1,2$, which confirms the high-order accuracy of the proposed RKDG method. More specifically, the different source term approximations in each 
stage of the third-order RK method \eqref{eq:fully-scheme-3rd-0}-\eqref{eq:fully-scheme-3rd} yields the desired third-order accuracy.
\begin{table}[!ht]
	\caption{Example \ref{exam_accuracy}, accuracy test far away from the equilibrium state for $k=1,2$ with equations \eqref{eq:change}.}
\small
\begin{tabular}{c c c c c c c c}
	\toprule
	Case & $N$ & \multicolumn{2}{c}{$\rho$} & \multicolumn{2}{c}{$\rho u$} & \multicolumn{2}{c}{$E$}\\
	\midrule
	\multirow{4}{*}{$k=1$} & 25 & 4.12E-04 & - & 5.17E-04 & - & 6.46E-04 & - \\
						   & 50 & 1.04E-04 & 1.98 & 1.31E-04 & 1.98 & 1.63E-04 & 1.99 \\
						   & 100 & 2.63E-05 & 1.99 & 3.29E-05 & 1.99 & 4.10E-05 & 1.99 \\
						   & 200 & 6.60E-06 & 1.99 & 8.59E-06 & 2.00 & 1.03E-05 & 2.00 \\
	\midrule
	\multirow{4}{*}{$k=2$} & 25 & 1.29E-05 & - & 1.75E-05 & - & 9.69E-06 & - \\
						   & 50 & 1.82E-06 & 2.82 & 2.41E-06 & 2.86 & 1.33E-06 & 2.87 \\
                           & 100 & 2.44E-07 & 2.90 & 3.17E-07 & 2.92 & 1.75E-07& 2.92 \\
                           & 200 & 3.16E-08 & 2.95 & 4.08E-08 & 2.96 & 2.25E-08 & 2.96 \\
   	\bottomrule
\end{tabular}
	\label{table:2}
\end{table}

\subsection{Explosion}
\label{exam_explosion}
In this example, we validate the shock capturing and total energy conservation properties of our proposed scheme. The initial data is given by
\begin{align}
	&\rho(r,0)=\frac{\sin(\sqrt{2\pi/\kappa}r)}{\sqrt{2\pi/\kappa}r},\quad \rho u(r,0)=0,\nonumber\\
	&p(r,0)=\begin{cases}\alpha\kappa\rho(r,0)^2, &r\le r_1 \cr \kappa\rho(r,0)^2, &r>r_1\end{cases},
\end{align}
where we set $\kappa=1$, $\gamma=2$, $G=1$ and increase the equilibrium pressure by a factor $\alpha=10$ for $r\le r_1=0.1$. The computational domain is set as $\Omega=[0,0.5]$, and discretized with $N=200$ cells. We use $P^2$ piecewise polynomial and the third-order RK method \eqref{eq:fully-scheme-3rd-0}-\eqref{eq:fully-scheme-3rd}. We set the boundary condition of the velocity $u(0.5,t)=0$ at the outer domain boundary. We perform the simulation up to time $t=0.15$, and the numerical results are shown in Figure \ref{fig:exp2}. Both the well-balanced scheme and the standard DG scheme perform similarly in capturing shocks, which means our proposed scheme does not diminish the robustness of the shock capturing capability. Moreover, we can observe that our proposed scheme conserves total energy up to machine precision, while the standard DG scheme produces an error of about $3.5\times10^{-6}$ at $t=0.15$.
\begin{figure*}
	\centering
	\includegraphics[width=0.45\linewidth]{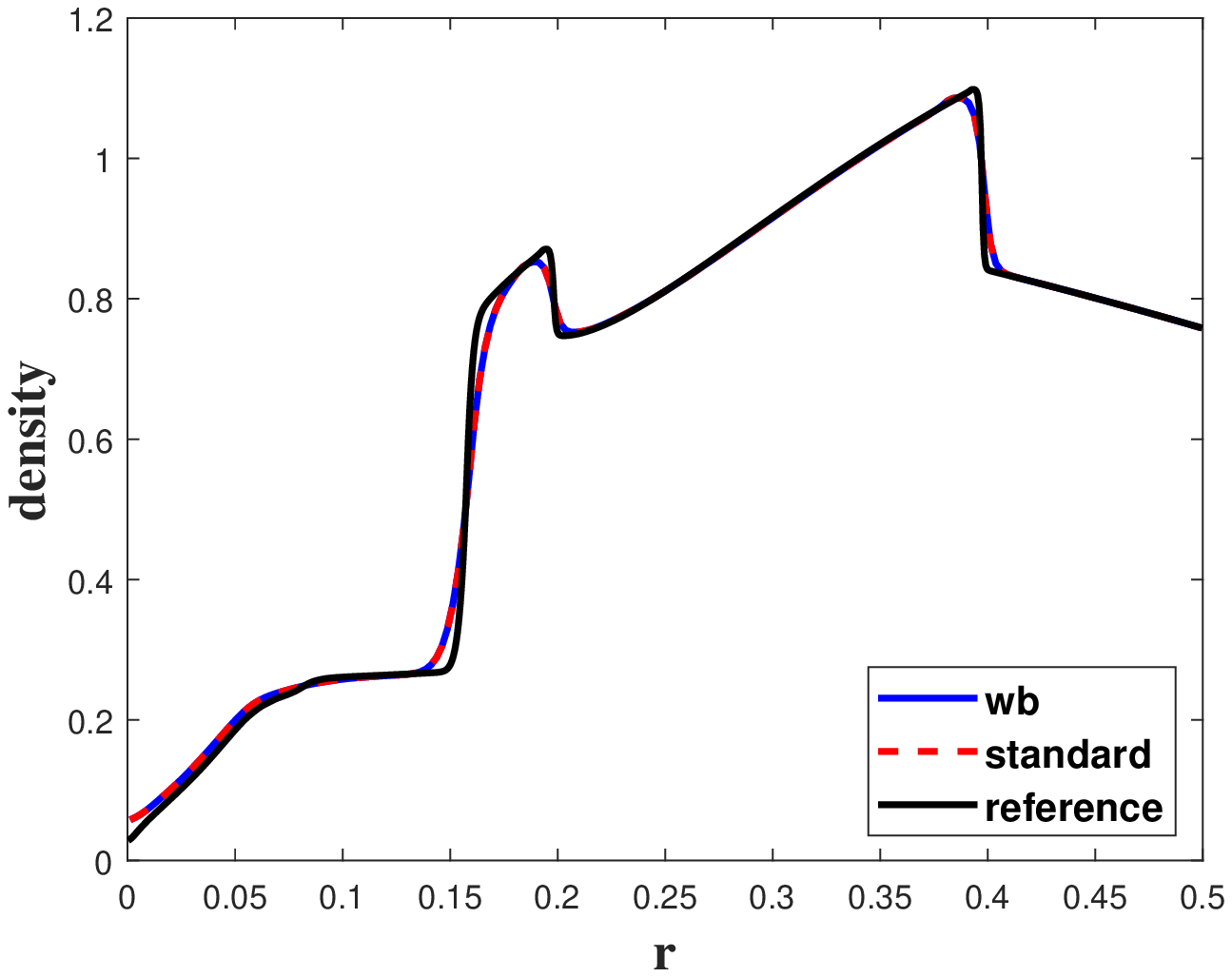}
	\includegraphics[width=0.45\linewidth]{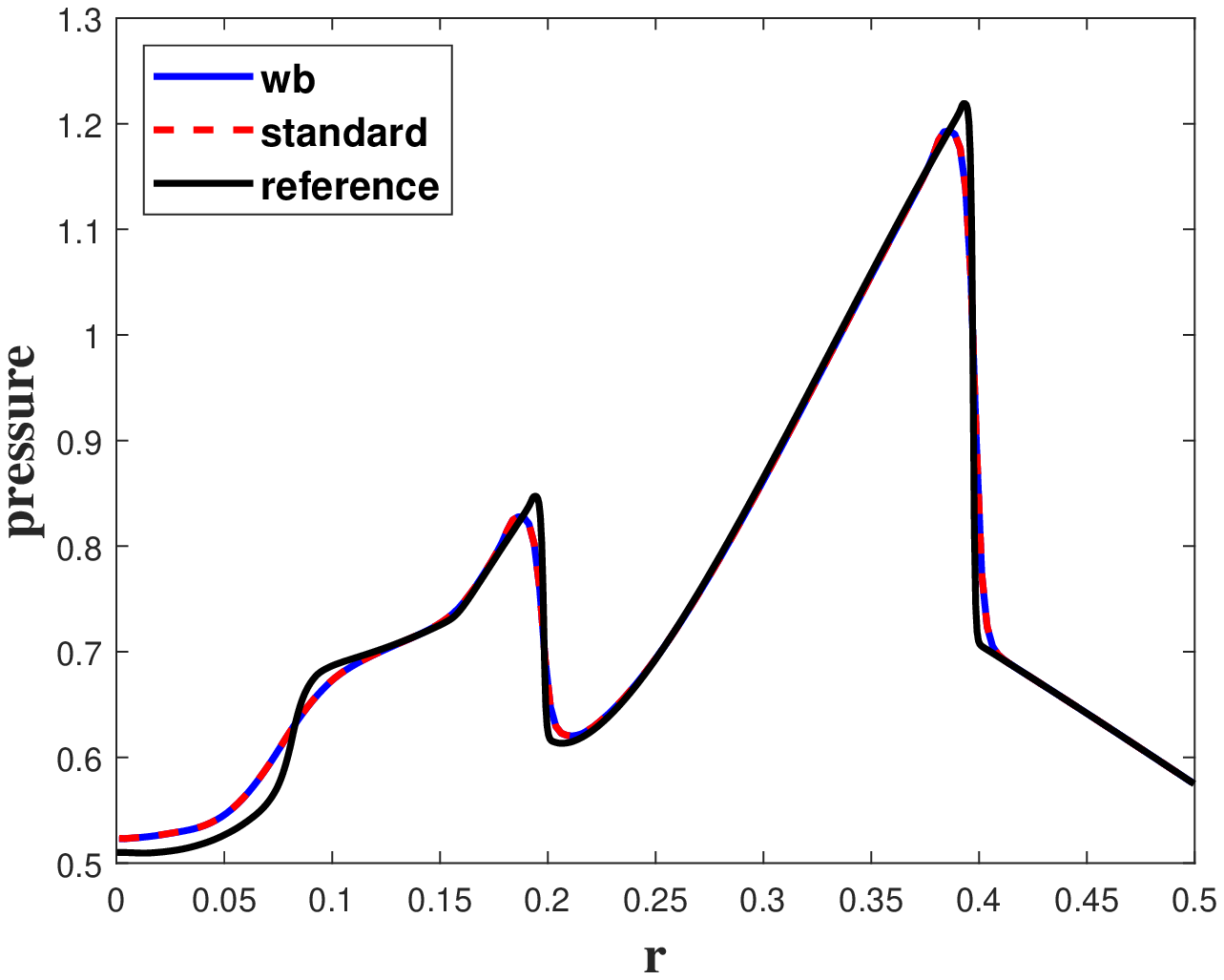} \\
	\includegraphics[width=0.45\linewidth]{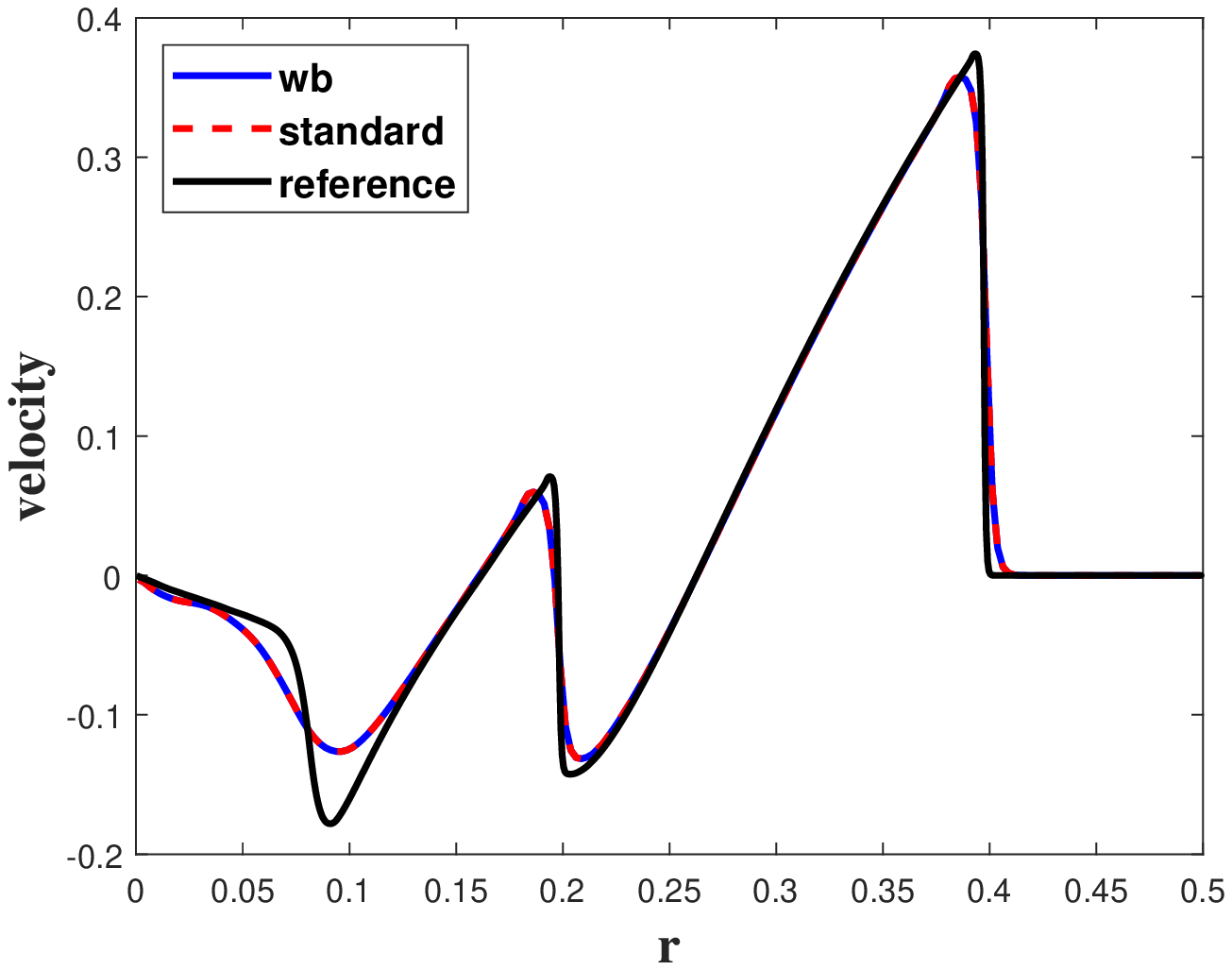}
	\includegraphics[width=0.45\linewidth]{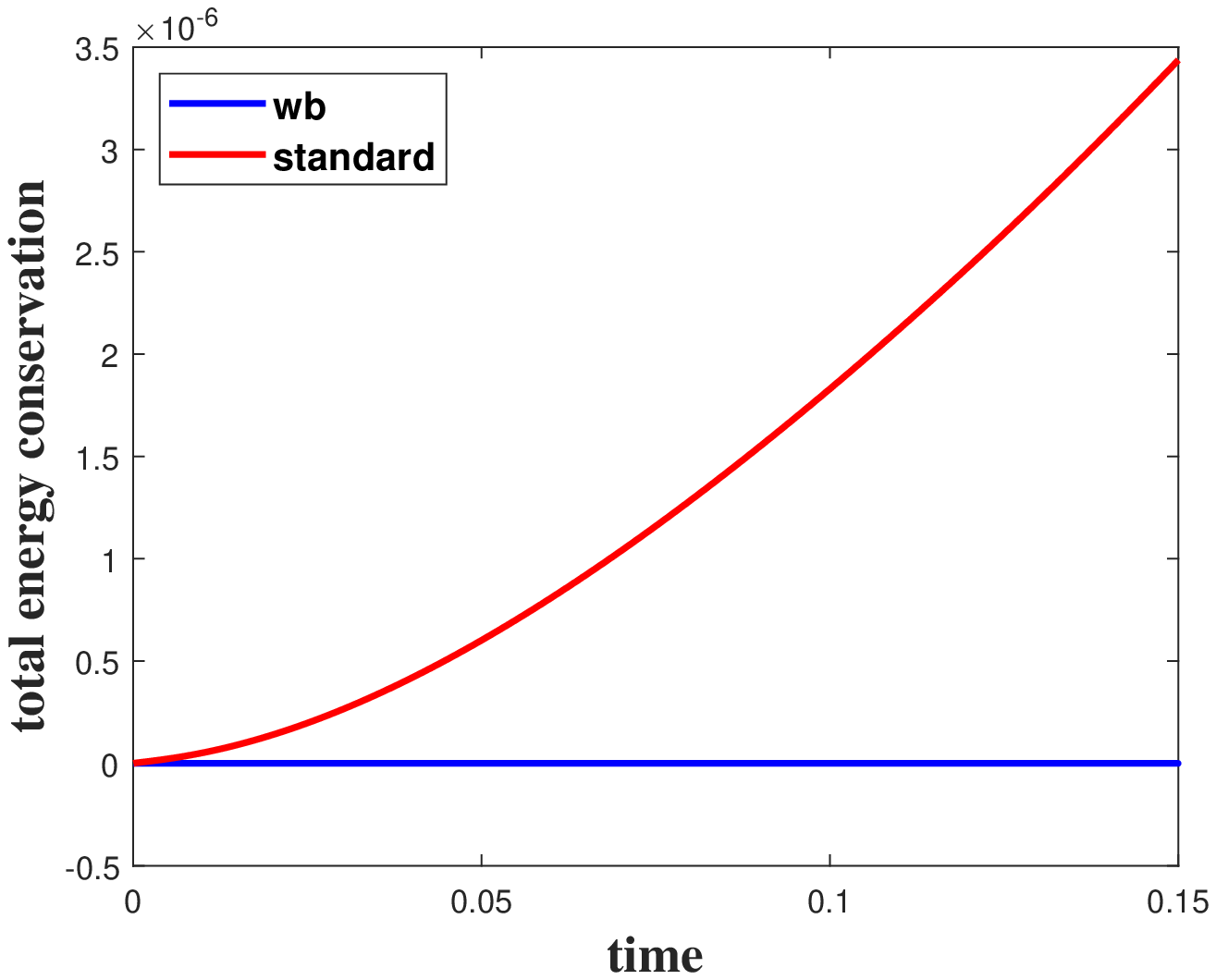}	
	\caption{The solution of well-balanced scheme (blue) and standard DG scheme (red) by using $N=200$ cells, compared with the reference solution (black) produced with $N=800$ cells. From left to right: the numerical solutions of density, velocity, pressure at time $t=0.15$ and the time history of the changes in total energy. The maximum absolute value of the changes in total energy is $8.049\times10^{-15}$ for the proposed scheme.}
	\label{fig:exp2}
\end{figure*}

\subsection{Yahil-Lattimer collapse}
\label{exam_yahil}
In this section, we consider the Yahil-Lattimer collapse test, which involves self-gravity and was studied in \cite{endeve2019thornado}, using standard DG methods. It models the self-similar collapse of a polytropic star, i.e. $p=\kappa\rho^\gamma$. In \cite{yahil1983self}, self-similar solutions to the gravitational collapse problem were constructed for $6/5\leq\gamma<4/3$. With two dimensional parameters in the model (the gravitational constant $G$ and the polytropic constant $\kappa$), the dimensionless similarity variable is
\begin{equation}
	X=\kappa^{-\frac12}G^{(\gamma-1)/2}r(-t)^{\gamma-2},
\end{equation}
where the origin of time is the moment of infinite central density. All the hydrodynamic variables can be expressed as a function of $X$, and the time-dependent Euler equations can be recast as a system of ODEs \citep[see][for details]{yahil1983self}. Therefore, we use these self-similar solutions solved by the ODEs given in \cite{yahil1983self} as a reference solution.

We show some numerical results obtained with $\gamma=1.3$. We set the computational domain to $\Omega=[0,10^{10}]$~cm discretized with $N=256$ cells, and the collapse time to $(-t)=150$~ms. 
We use a geometrically increasing cell spacing
\begin{equation}
	\Delta r_j=r_{j+\frac12}-r_{j-\frac12}=a^{j-1}\Delta r_1,\qquad j=1,...,N,
\end{equation}
with the size of the innermost cell set to $\Delta r_1=1\times10^5$~cm, and increasing at a rate $a=1.03203$.
The size of the last element is about $3\times10^8$ cm. 
The gravitational constant $G$ is set to $6.67430\times10^{-8}$ $\mbox{cm}^{-3}\ \mbox{g}^{-1}\ \mbox{s}^{-2}$. We use the reference solution at time $(-t)=150$~ms to compute the initial density and velocity. The polytropic constant $\kappa=9.54\times10^{14}$ is used to give the initial pressure.  We use the reflecting boundary condition for the inner boundary and zeroth-order extrapolation for the outer boundary.


We simulate collapse until $(-t)=0.5$ ms, and the central density increases from about $10^9$ g $\mbox{cm}^{-3}$ to about $10^{14}$ g $\mbox{cm}^{-3}$. We plot the density $\rho$ and velocity $u$ at different times in Figure \ref{fig:exp_yahil}, and compare the results with the reference solutions obtained in \cite{yahil1983self}. The figures show that our numerical method performs well during collapse. We also compare the total energy conservation property between our proposed scheme and the standard DG scheme. The total energy is defined as $E_{tot}=\int_{\Omega}\left(E+\frac12\,\rho\,\Phi\right)\,r^2\,\mathrm{d}r$. The total energy conservation for RK3 time discretization $\Delta E$ is defined as follows
\begin{align}
	\Delta E(t^{m+1})=&E_{tot}(t^{m+1})-E_{tot}(t^{m})\nonumber\\
	&+4\pi\Delta t\,R^2\frac{\hat{\boldsymbol{f}}^{n,[3]}_{N+\frac12}+\boldsymbol{f}^{(1),[3]}_{N+\frac12}+4\boldsymbol{f}^{(2),[3]}_{N+\frac12}}{6},\nonumber\\
	\Delta E=&\sum_{m=1}^{M}\Delta E(t^{m+1}),\label{eq:correction-energy}
\end{align}
where $R$ is the outer boundary, $N$ is the number of cells and $M$ is the number of time steps. When the time is close to $(-t)=0.5$~ms and the density grow rapidly to $10^{14}$ g $\mbox{cm}^{-3}$, our proposed scheme maintains total energy conservation to round-off error while that of the standard scheme is much larger.
\begin{figure*}
	\centering
	\includegraphics[width=0.45\linewidth]{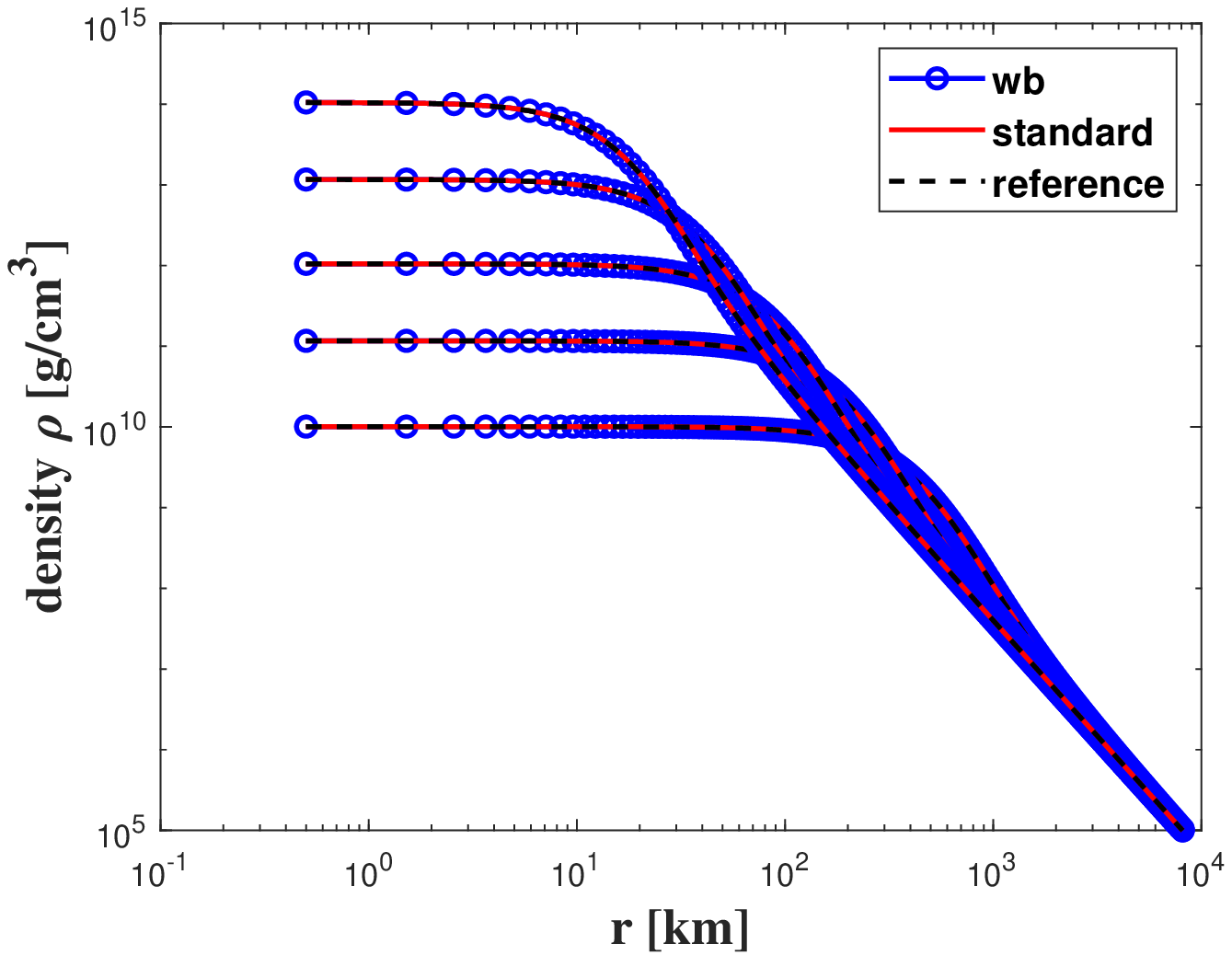}
	\includegraphics[width=0.45\linewidth]{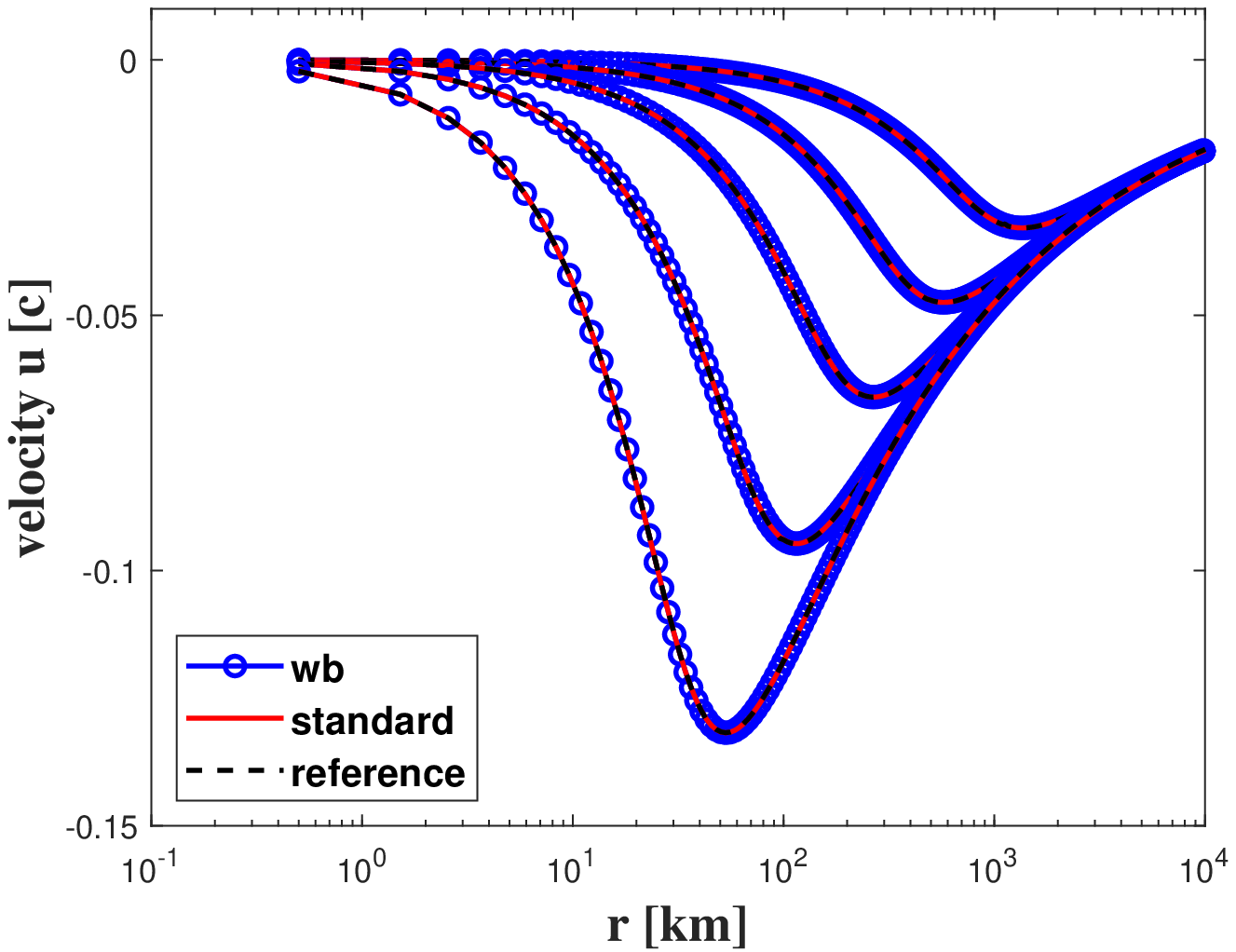} \\
	\includegraphics[width=0.65\linewidth]{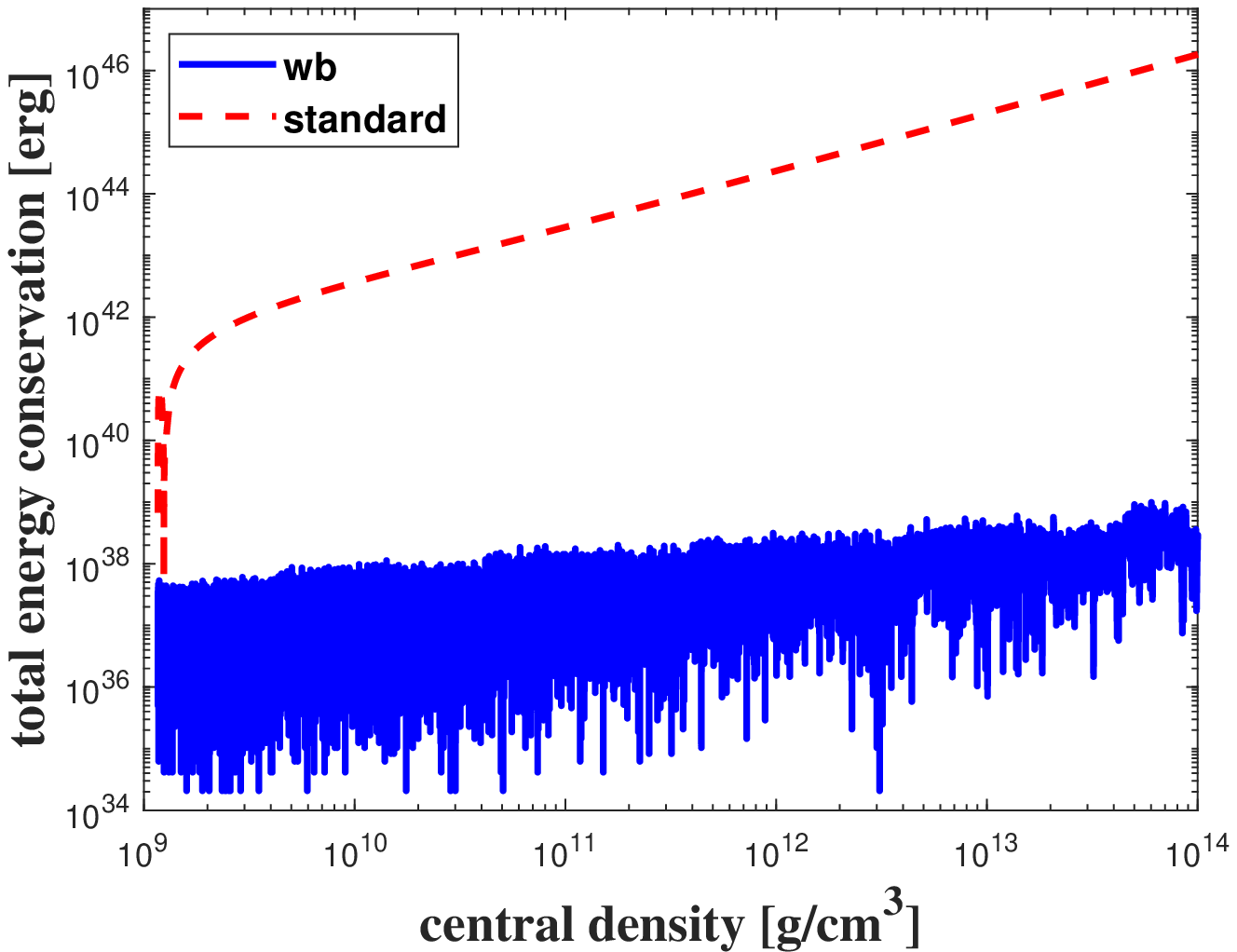}
	\caption{Example \ref{exam_yahil}, the figure of numerical solution (blue) of density $\rho$ (top left) and velocity $u$ (top right) during collapse, compared with the standard scheme (red) and the reference solution (black). We compared the solutions at select central densities, approximately $[10^{10},10^{11},10^{12}, 10^{13}, 10^{14}]$ g cm$^{-3}$, which correspond to $(-t)=[51.0, 15.0, 5.0, 1.5, 0.5]$ ms. Velocity gradually decreases over time. The comparison of the total energy conservation between our proposed scheme and standard scheme versus central density shows in the bottom that when the time is close to $(-t)=0.5$ ms, our proposed scheme has a much smaller total energy conservation than the standard scheme.}
	\label{fig:exp_yahil}
\end{figure*}

\subsection{Toy model of stellar core-collapse, bounce, and shock evolution}\label{exam_toy}

We consider a toy model of core-collapse supernova as considered in \cite{janka1993does,kappeli2016well}. This test simulates the spherically symmetric and adiabatic collapse, bounce, shock evolution, and proto-neutron star formation for a simplified model using a phenomenological EoS.  
This test provides a stringent check on the energy conservation properties of our proposed scheme --- especially during core bounce when core-collapse supernova codes typically exhibit an abrupt change in the total energy \citep[e.g.,][]{skinner_etal_2019,bruenn_etal_2020}.  

The governing equations are given by \eqref{eq:mass}-\eqref{eq:energy} and \eqref{eq:poisson} with a non-ideal EoS. We first set $\gamma=4/3$ and obtain an equilibrium state according to \eqref{eq:equilibrium} and \eqref{eq:polytropic} for a central density $\rho_c=10^{10}\,\text{g/cm}^3$, polytropic constant $\kappa=4.897\times10^{14}$ (in cgs units), and gravitational constant $G=6.67430\times10^{-8}\,\text{cm}^{-3}\text{g}^{-1}\text{s}^{-2}$. We initialize the collapse by reducing the adiabatic index from $\gamma=4/3$ to a slightly smaller value $\gamma_1=1.325$. Then the initial internal energy density is set as $\rho e=\kappa\rho^{\gamma_1}/(\gamma_1-1)$ where the initial density $\rho$ is the equilibrium density for $\gamma=\frac43$ and the initial momentum is set to zero.

The EoS in this test consists of two parts, a polytropic part and a thermal part, taking the form
\begin{align}
	&p=p_{\rm{p}}+p_{\rm{th}},\\
	&\rho e=(\rho e)_{\rm{p}}+(\rho e)_{\rm{th}}.
\end{align}
The polytropic part is given by
\begin{equation}
	p_{\rm{p}}=p_{\rm{p}}(\rho)=\begin{cases}
		\kappa_1\rho^{\gamma_1}, & \rho<\rho_{\rm{nuc}},\\
		\kappa_2\rho^{\gamma_2}, & \rho\ge\rho_{\rm{nuc}},
	\end{cases}
\end{equation}
where $\rho_{\rm{nuc}}=2\times10^{14}\,\text{g/cm}^3$ is the nuclear density parameter and separates two different regimes with different adiabatic indexes, $\gamma_1=1.325$ and $\gamma_2=2.5$ (This mimics the stiffening observed in more realistic EoSs as the matter composition transitions from consisting of nucleons and nuclei to bulk nuclear matter.) The polytropic internal energy density is given by
\begin{equation}
	(\rho e)_{\rm{p}}=(\rho e)_{\rm{p}}(\rho)=\begin{cases}
		E_1\rho^{\gamma_1}, & \rho<\rho_{\rm{nuc}},\\
		E_2\rho^{\gamma_2}+E_3\rho, & \rho\ge\rho_{\rm{nuc}},
	\end{cases}
\end{equation}
where the parameters $E_1,E_2,E_3,\kappa_1,\kappa_2$ are given by
\begin{align}
	&E_1=\frac{\kappa}{\gamma_1-1},\quad \kappa_1=\kappa, \quad \kappa_2=(\gamma_2-1)E_2,\nonumber\\
	&E_2=\frac{\kappa}{\gamma_2-1}\rho_{\rm{nuc}}^{\gamma_1-\gamma_2},\quad E_3=\frac{\gamma_2-\gamma_1}{\gamma_2-1}E_1\rho_{\rm{nuc}}^{\gamma_1-1}.
\end{align}
One can easily check that the polytropic pressure and internal energy density are both continuous across the density $\rho=\rho_{\rm{nuc}}$. The thermal part is given by
\begin{align}
	p_{\rm{th}}=(\gamma_{\rm{th}}-1)(\rho e)_{\rm{th}},\qquad (\rho e)_{\rm{th}}=\rho e-(\rho e)_{\rm{p}},
\end{align}
where $\gamma_{\rm{th}}=1.5$. We note that the initial thermal pressure is zero in this test. Combining the above expressions, we can write the complete EoS in this test as
\begin{align}
	p=p(\rho,e)=\begin{cases}
		(\gamma_{\rm{th}}-1)\rho e+\frac{\gamma_1-\gamma_{\rm{th}}}{\gamma_1-1}\kappa\rho^{\gamma_1}, & \rho<\rho_{\rm{nuc}},\\
		(\gamma_{\rm{th}}-1)\rho e+\frac{\gamma_2-\gamma_{\rm{th}}}{\gamma_2-1}\kappa\rho_{\rm{nuc}}^{\gamma_1-\gamma_2}\rho^{\gamma_2}\\
		\qquad\quad-\frac{(\gamma_{\rm{th}}-1)(\gamma_2-\gamma_1)}{(\gamma_2-1)(\gamma_1-1)}\kappa\rho_{\rm{nuc}}^{\gamma_1-1}\rho, & \rho\ge\rho_{\rm{nuc}}.
	\end{cases}
\end{align}
We note that there may be a different $\gamma$ in different regions of the computational domain ($\gamma_{1}$ versus $\gamma_{2}$) and we use the $\gamma$ of the innermost cell to calculate $n$ and the corresponding numerical solution $\theta_n$ in Section \ref{sec:lane}. 

We set the computational domain as $\Omega=[0,1.5\times10^3]$ km with a geometrically increasing cell spacing
\begin{equation}
	\Delta r_j=r_{j+\frac12}-r_{j-\frac12}=a^{j-1}\Delta r_1,\qquad j=1,...,N,
\end{equation}
such that the mesh can be defined by specifying the size of the innermost cell $\Delta r_1$ and the increasing rate $a$. Different values of $\Delta r_1$ and $a$ have been utilized in the test with values specified in Table \ref{table:3}. 
We use the reflective boundary condition for the inner boundary and zeroth-order extrapolation for the outer boundary. We set $k=2$ and use the third-order RK method \eqref{eq:fully-scheme-3rd-0}-\eqref{eq:fully-scheme-3rd} in this test. The simulation is performed from $t=0$ to $t=0.11$~s. According to the description in \cite{janka1993does,kappeli2016well}, the central density will continue to increase until it exceeds nuclear density $\rho_{\rm{nuc}}$ and the EoS stiffens to form an inner core that eventually settles to a new equilibrium configuration (the proto-neutron star). Due to its inertia, the inner core overshoots its equilibrium and rebounds to form the shock wave. This is the so-called core bounce, and in this paper the time of bounce is set as the time when the average density within the innermost 2~km, which is called central density, reaches its maximum.  
Due to the absence of energy losses in our model (i.e., from deleptonization by neutrinos and dissociation of nuclei below the shock), the shock wave does not stall, but propagates towards the outer boundary of the domain.

We note that the dynamics before bounce is similar to the case discussed in Section~\ref{exam_yahil}.  We refer to the top right panel in Figure~\ref{fig:exp_yahil} for the evolution of the velocity, and the thermal energy ratio $\mathcal{P}_{th}={(\rho e)_{th}}/{(\rho e)}$ is almost zero across the whole computational domain before bounce. To illustrate the dynamics after bounce, we refer to Figure~\ref{fig:after-bounce}, which shows the fluid velocity and thermal energy ratio versus radius for select time slices. We can see the shock forms at bounce at a radius between 10 and 20~km, and then gradually propagates to the outer boundary.  The thermal energy remains very small in the inner core, below the location where the shock formed, while it increases sharply across the shock.  Behind the initial shock, several smaller shocks form and propagate radially as a result of oscillations in the proto-neutron star as it settles to a hydrostatic equilibrium state.
\begin{figure*}
	\centering
	\includegraphics[width=0.48\linewidth]{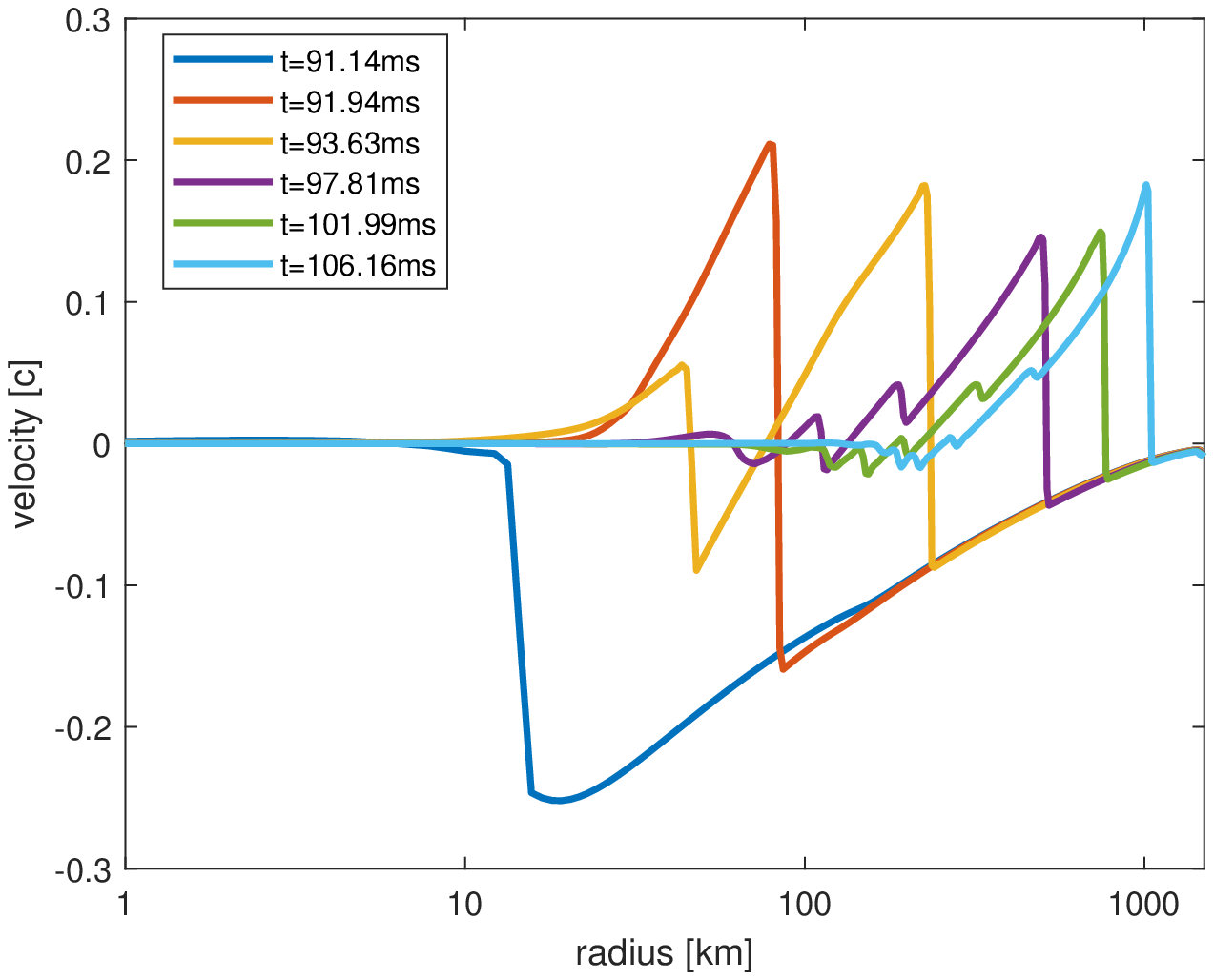}
	\includegraphics[width=0.48\linewidth]{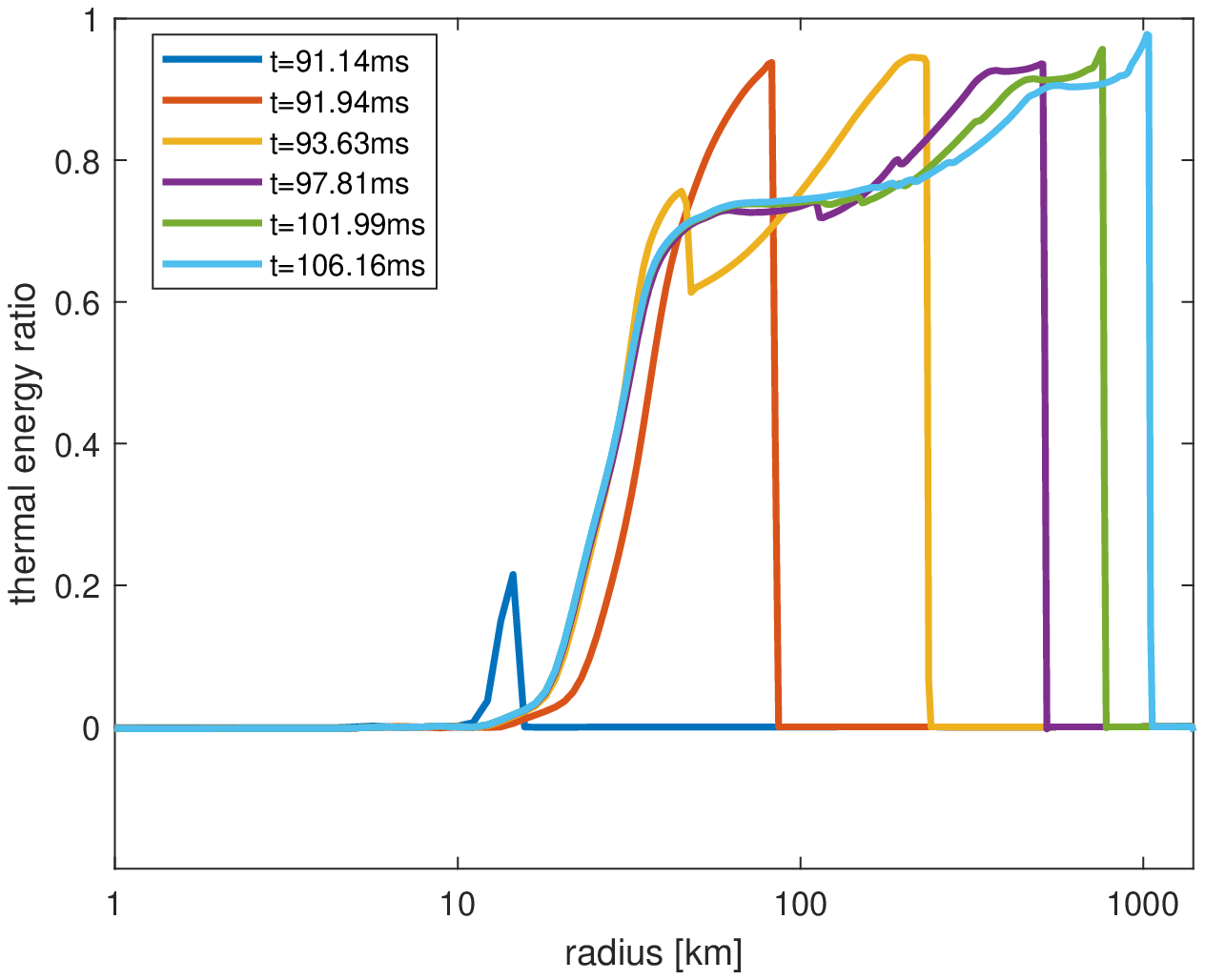}
	\caption{Example \ref{exam_toy}, fluid velocity and thermal energy ratio versus radius after bounce. We use $N=256$ cells and select 6 time slices after the bounce.}
	\label{fig:after-bounce}
\end{figure*}

We test the proposed well-balanced and energy conserving DG method and the standard DG method with different number of grids and present them in Table \ref{table:3}, from which we observe that the time of bounce, the central density of the bounce, and the final central density at $t=110$~ms are very similar for all the cases $N=128,256,512,1024,2048$. We show the central density as a function of time in Figure \ref{fig:central_density}. Both the proposed and standard DG schemes simulate this test well. In the zoom-in figure, the proposed scheme is shown to be slightly better than the standard scheme for $N=256$ and $t\in[91,94]$. In Figure \ref{fig:final_density}, we show the density versus radius at $t=0.11$~s for the case $N=128,256$. We can observe that there are small shocks at the region $r\in[200,1100]$, and our proposed scheme performs much better than standard scheme in capturing these shocks (when compare with the high resolution reference simulation), especially for the case with $N=256$.

At last, we define the energies as follows
\begin{align}
	&E_{\rm int}=\int_{\Omega}\rho e\,4\pi r^2\,\mathrm{d}r,~~
	E_{\rm kin}=\int_{\Omega}\frac12\rho u^2\,4\pi r^2\,\mathrm{d}r,\nonumber\\
	&E_{\rm grav}=\int_{\Omega}\frac12\rho\,\Phi\,4\pi r^2\,\mathrm{d}r,
\end{align}
where $E_{\rm int}$, $E_{\rm kin}$, and $E_{\rm grav}$ denote the internal energy, kinetic energy, and gravitational energy, respectively. We list these three energies $E_{\rm int}$, $E_{\rm kin}$, $-E_{\rm grav}$, and the total energy conservation $\Delta E$ in \eqref{eq:correction-energy} for different number of cells $N$ at time $t=0.11$ s in Table~\ref{table:4}. Our objective is to study how different schemes and limiters affect the total energy conservation $\Delta E$. Three different cases are considered in this table: our well-balanced and total-energy-conserving scheme, the standard RKDG scheme, and the standard scheme with the new limiter correction \eqref{eq:tvd2} (results for this latter scheme are also plotted in the bottom panels in Figure~\ref{fig:central_density}).  The reason for including the standard scheme with the correction is motivated by results from \citet{pochik2021thornado}, which suggest that limiters may negatively impact the energy conservation properties of the standard DG scheme for the Euler--Poisson system. From Table~\ref{table:4} (rightmost column), we can see that the well-balanced scheme can maintain the total energy conservation to round-off errors. For the standard scheme, neither the case with the standard limiter or the case with the correction term can maintain the round-off errors.  However, we note that the standard scheme with the correction is substantially better than the standard scheme with the standard limiter. We plot $E_{\rm int}$, $E_{\rm kin}$, $-E_{\rm grav}$, and total energy conservation $\Delta E$ versus time in Figure~\ref{fig:energy} for the simulations with $N=128$ and $N=256$. We can see that the total energy conservation for the standard scheme increases rapidly near bounce, and remains relatively constant thereafter, while for our proposed scheme the change in the total energy remains small and is not affected by core bounce.  

\begin{table*}
	\centering
	\caption{Example \ref{exam_toy}, the time of bounce, central density at the bounce time, and central density at the final time for different number of cells. The left and right columns below each label represent the result of the proposed scheme and standard scheme, respectively.}
\begin{tabular}{c c c c c c c c c}
	\toprule
	$N$ & $\Delta r_1$ [km] & $a-1$ & \multicolumn{2}{c}{$t_b$ [ms]} & \multicolumn{2}{c}{$\rho_b$ [$10^{14}\,\text{g/cm}^3$]} & \multicolumn{2}{c}{$\rho_f$ [$10^{14}\,\text{g/cm}^3$]}\\
	\midrule
	128 & 2 & $2.292\times10^{-2}$ & 91.10 & 91.09 & 3.65 & 3.66 & 2.87 & 2.81 \\
	\cmidrule(r){4-5} \cmidrule(r){6-7} \cmidrule(r){8-9}
	256 & 1 & $1.136\times10^{-2}$ & 91.13 & 91.13 & 3.68 & 3.68 & 2.81 & 2.79 \\
	\cmidrule(r){4-5} \cmidrule(r){6-7} \cmidrule(r){8-9}
	512 & 0.5 & $5.659\times10^{-3}$ & 91.16 & 91.16 & 3.65 & 3.63 & 2.81 & 2.80 \\
	\cmidrule(r){4-5} \cmidrule(r){6-7} \cmidrule(r){8-9}
	1024 & 0.25 & $2.823\times10^{-3}$ & 91.16 & 91.16 & 3.63 & 3.63 & 2.81 & 2.80 \\
	\cmidrule(r){4-5} \cmidrule(r){6-7} \cmidrule(r){8-9}
	2048 & 0.125 & $1.410\times10^{-3}$ & 91.17 & 91.17 & 3.62 & 3.62 & 2.81 & 2.80 \\
	\bottomrule
\end{tabular}
	\label{table:3}
\end{table*}

\begin{table*}
	\centering
	\caption{Example \ref{exam_toy}, four energies at time $t=0.11$ s. We compare the results of three schemes in this table for different number of cells $N$: the well-balanced and total-energy-conserving scheme, the standard scheme, the standard scheme with the new limiter correction \eqref{eq:tvd2}.}
\begin{tabular}{c c c c c c}
	\toprule
	$N$ & Case & $E_{\rm int}\,[10^{51}\,\text{erg}]$ & $E_{\rm kin}\,[10^{51}\,\text{erg}]$ & $-E_{\rm grav}\,[10^{51}\,\text{erg}]$ & $\Delta E\,[10^{51}\,\text{erg}]$\\
	\midrule
	\multirow{3}{*}{128} & wb & 120.0 & 3.658 & 122.6 & 4.386$\times10^{-11}$ \\
						 & standard & 117.7 & 4.091 & 119.1 & 1.269 \\
						 & standard with correction & 119.0 & 3.838 & 121.0 & 4.219$\times10^{-2}$ \\
						 \midrule
	\multirow{3}{*}{256} & wb & 117.7 & 3.452 & 120.0 & 2.886$\times10^{-10}$ \\
						 & standard & 116.8 & 3.681 & 118.8 & 0.425 \\
						 & standard with correction & 117.3 & 3.543 & 119.6 & 5.976$\times10^{-3}$ \\
						 \midrule
	\multirow{3}{*}{512} & wb & 117.2 & 3.509 & 119.7 & 2.395$\times10^{-10}$ \\
						 & standard & 116.9 & 3.602 & 119.2 & 0.170 \\
						 & standard with correction & 117.1 & 3.546 & 119.5 & 1.448$\times10^{-3}$ \\
						 \midrule
   \multirow{3}{*}{1024} & wb & 117.2 & 3.542 & 119.7 & 5.404$\times10^{-10}$ \\
						 & standard & 117.1 & 3.584 & 119.5 & 0.112 \\
						 & standard with correction & 117.1 & 3.559 & 119.6 & 3.545$\times10^{-4}$ \\
						 \midrule
   \multirow{3}{*}{2048} & wb & 117.2 & 3.556 & 119.7 & 1.466$\times10^{-9}$ \\
						 & standard & 117.1 & 3.578 & 119.6 & 0.038 \\
                         & standard with correction & 117.1 & 3.566 & 119.7 & 4.610$\times10^{-5}$ \\
                         \bottomrule
\end{tabular}
	\label{table:4}
\end{table*}

\begin{figure*}
	\centering
	\includegraphics[width=0.48\linewidth]{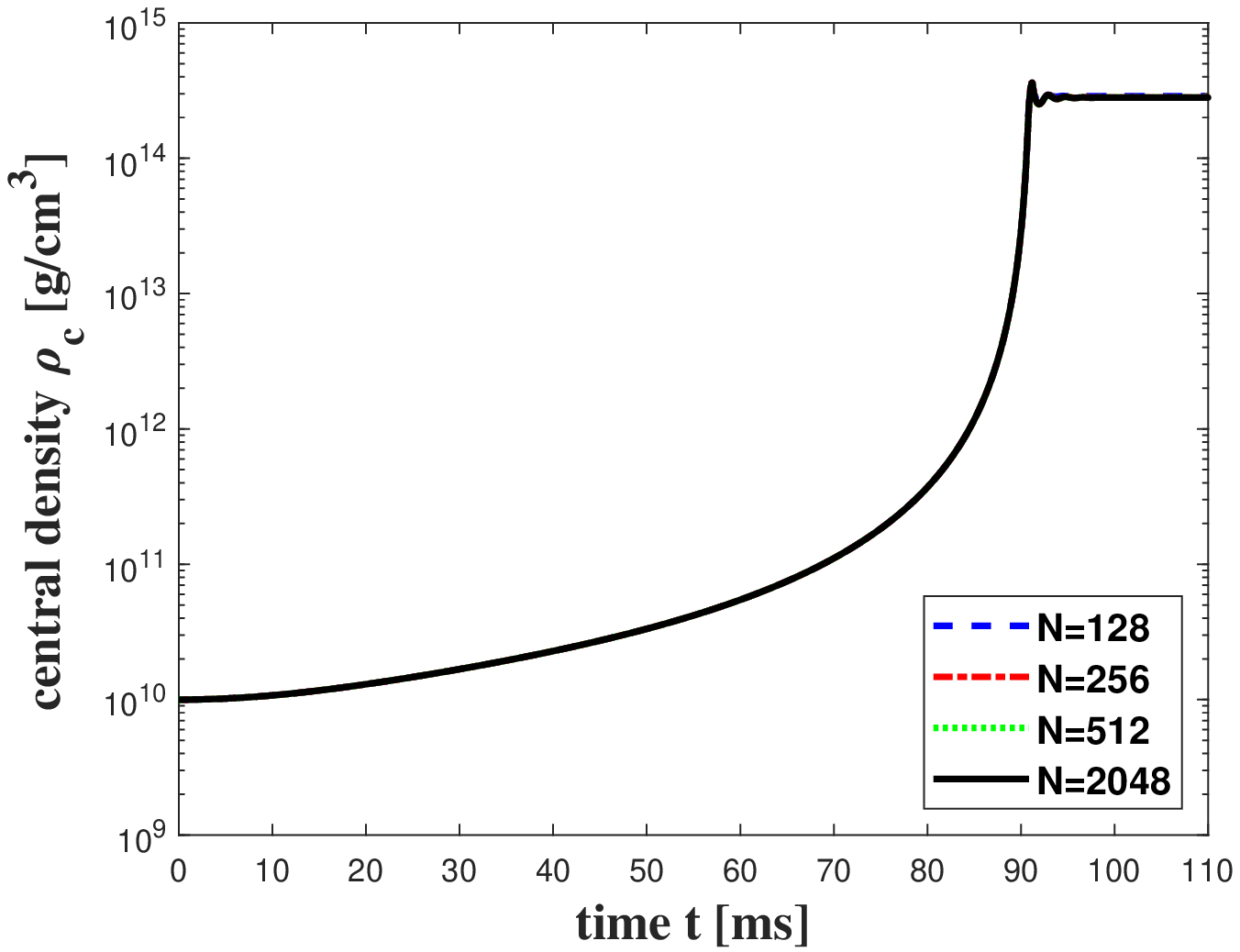}
	\includegraphics[width=0.48\linewidth]{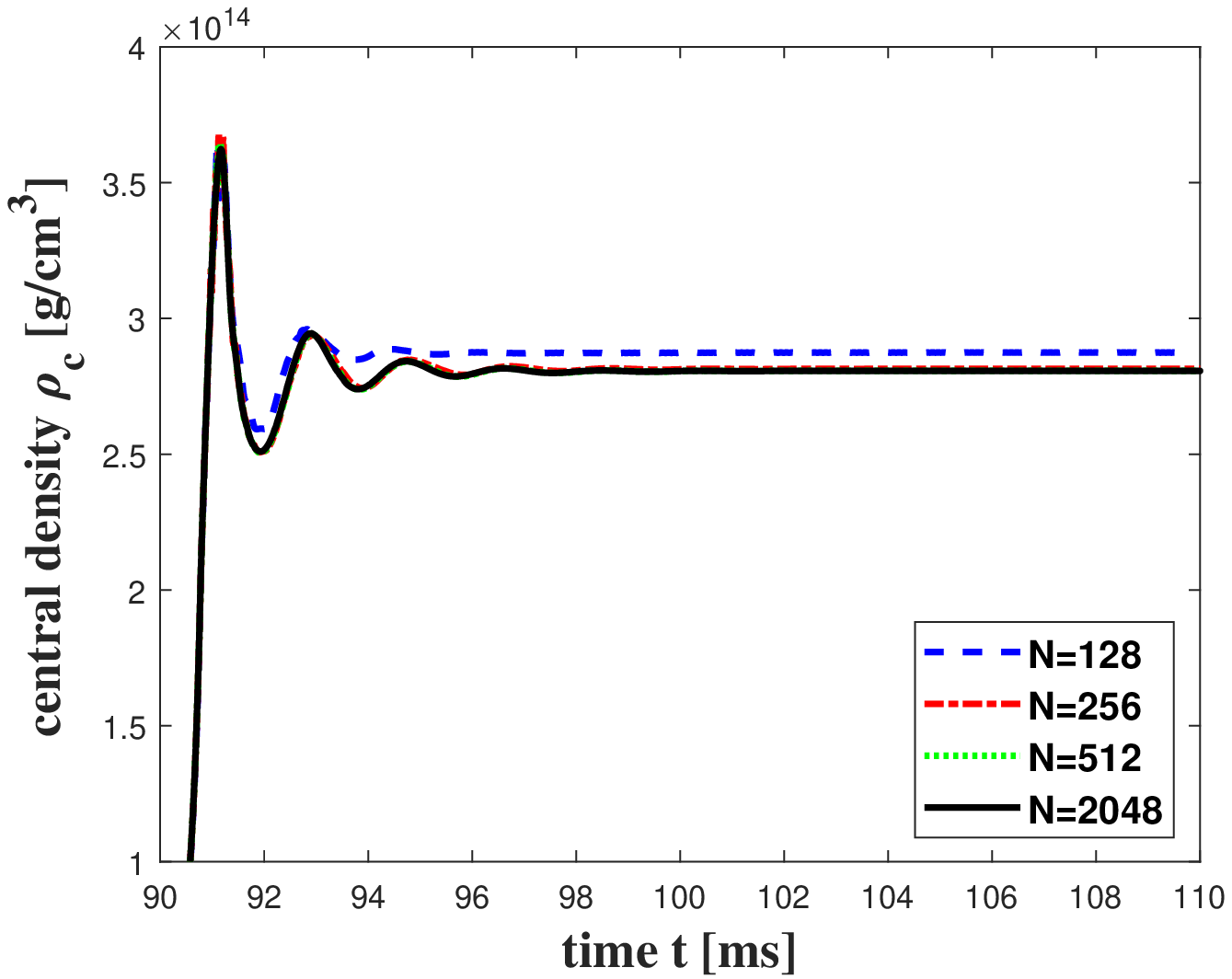}
	\includegraphics[width=0.48\linewidth]{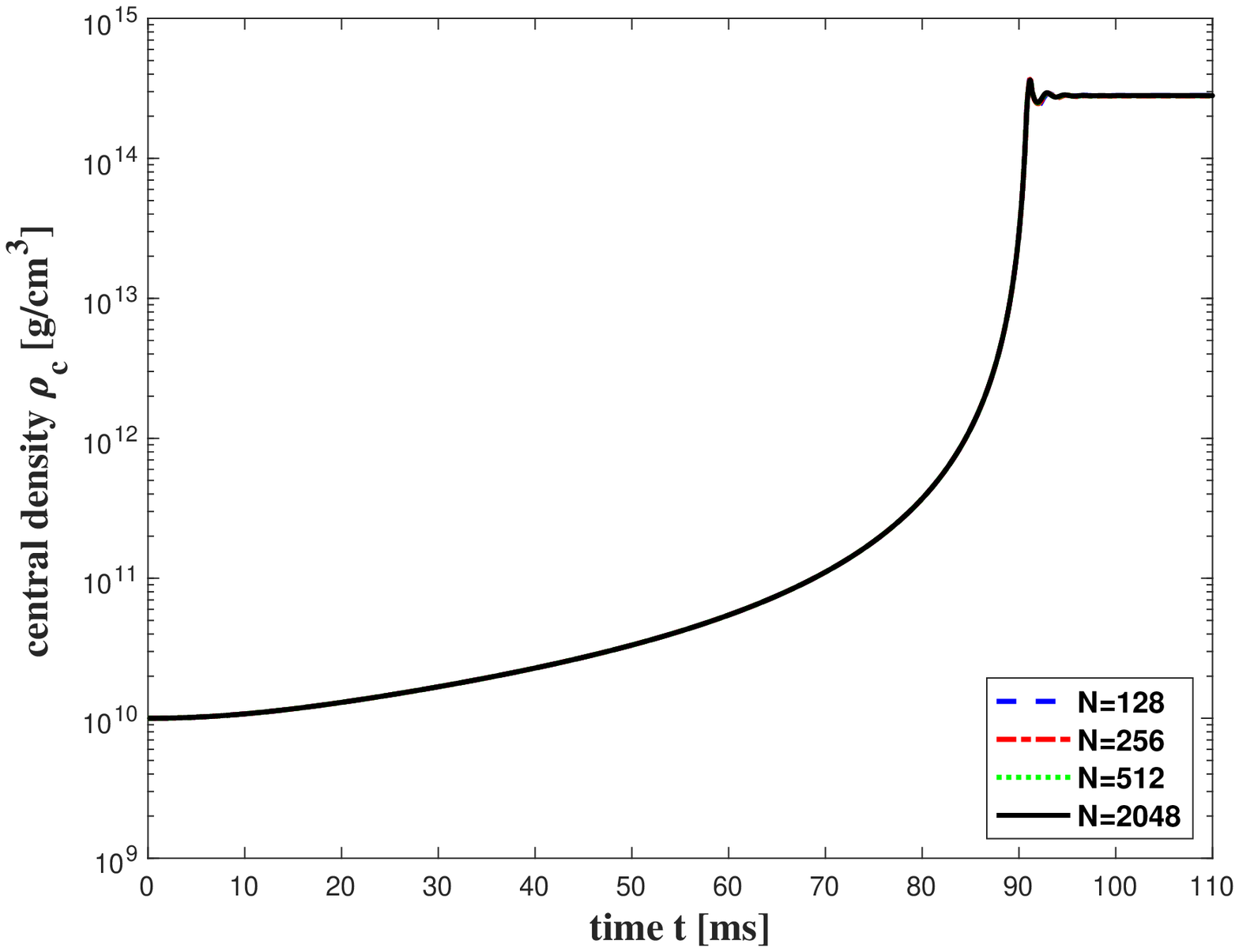}
	\includegraphics[width=0.48\linewidth]{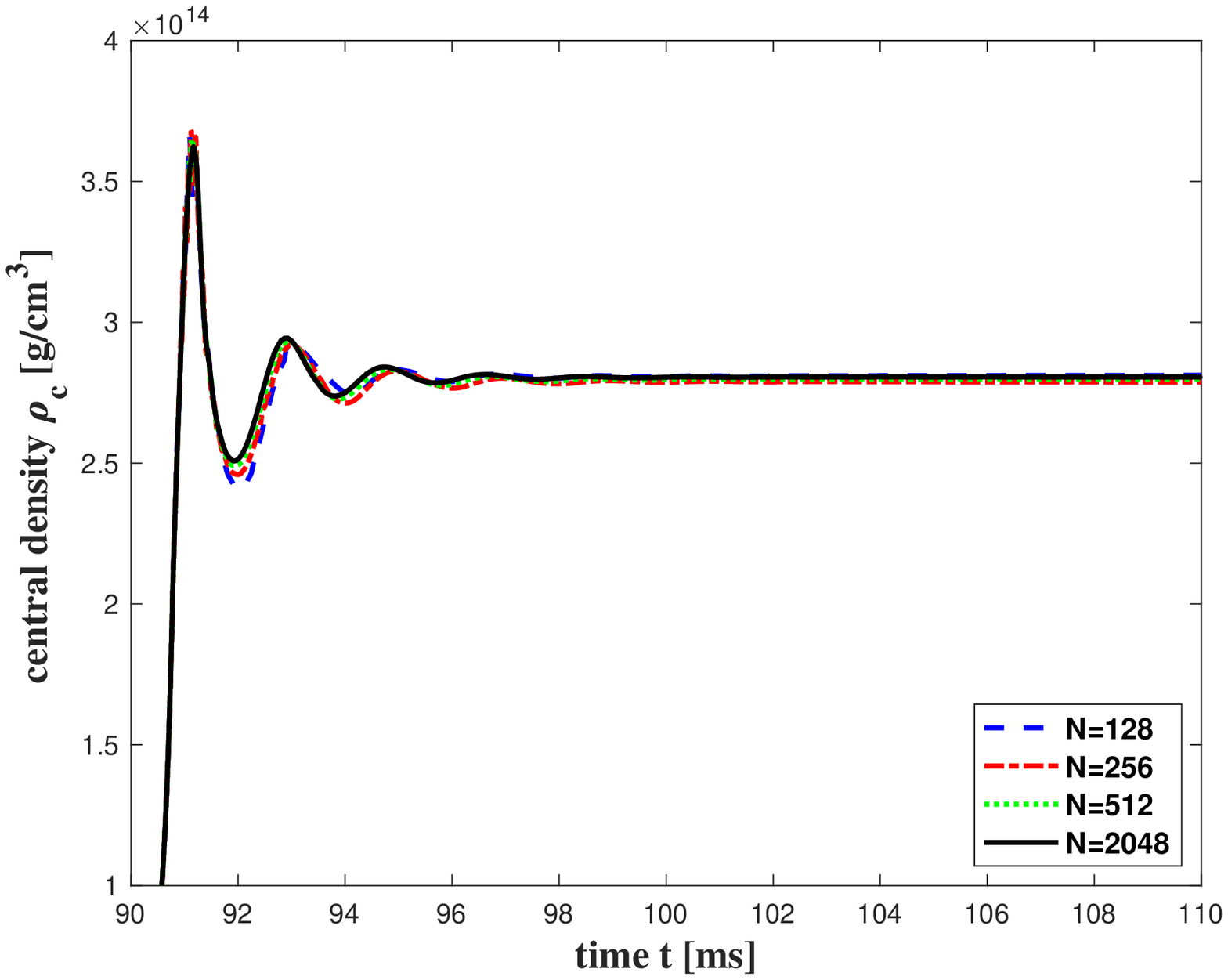}
	\includegraphics[width=0.48\linewidth]{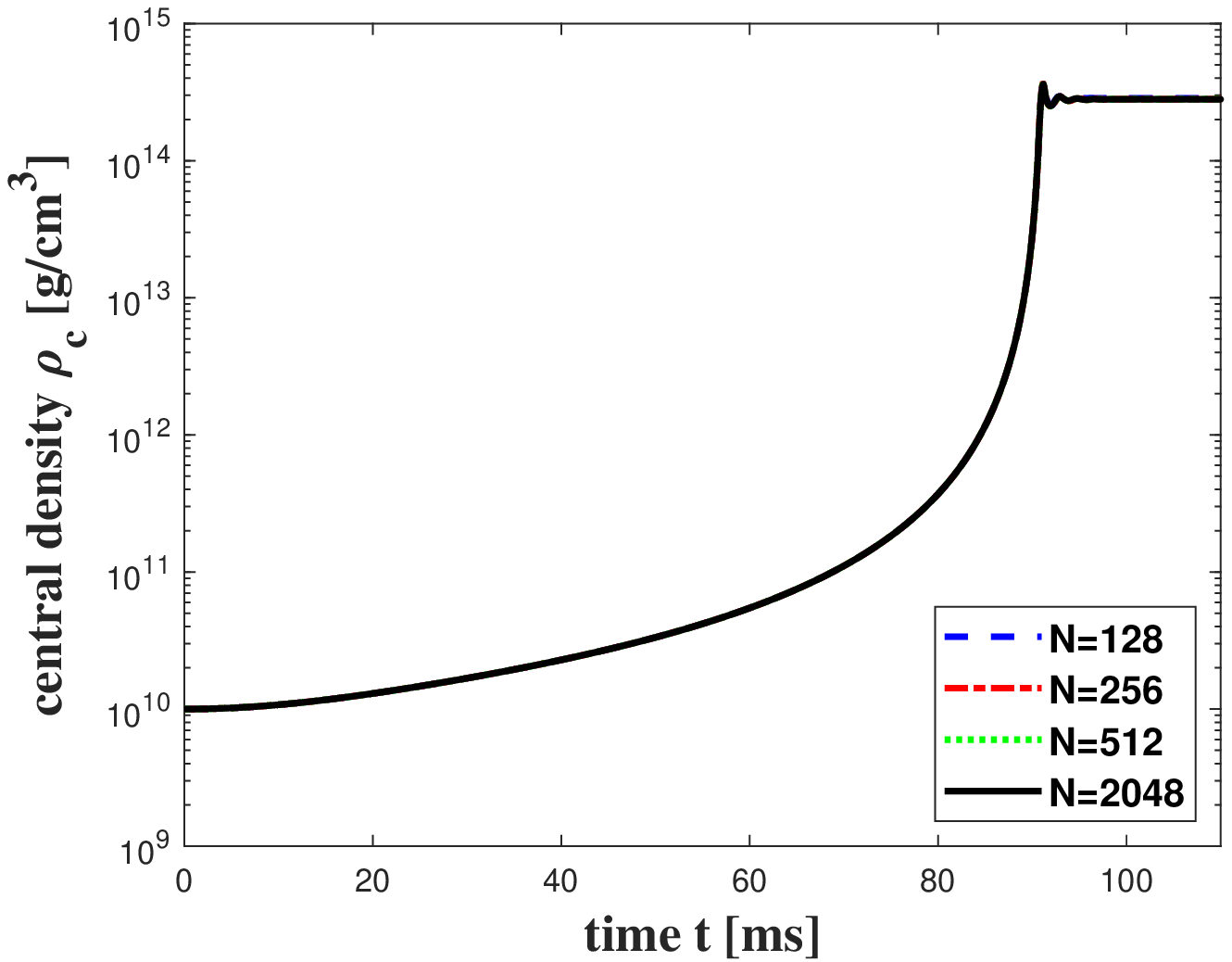}
	\includegraphics[width=0.48\linewidth]{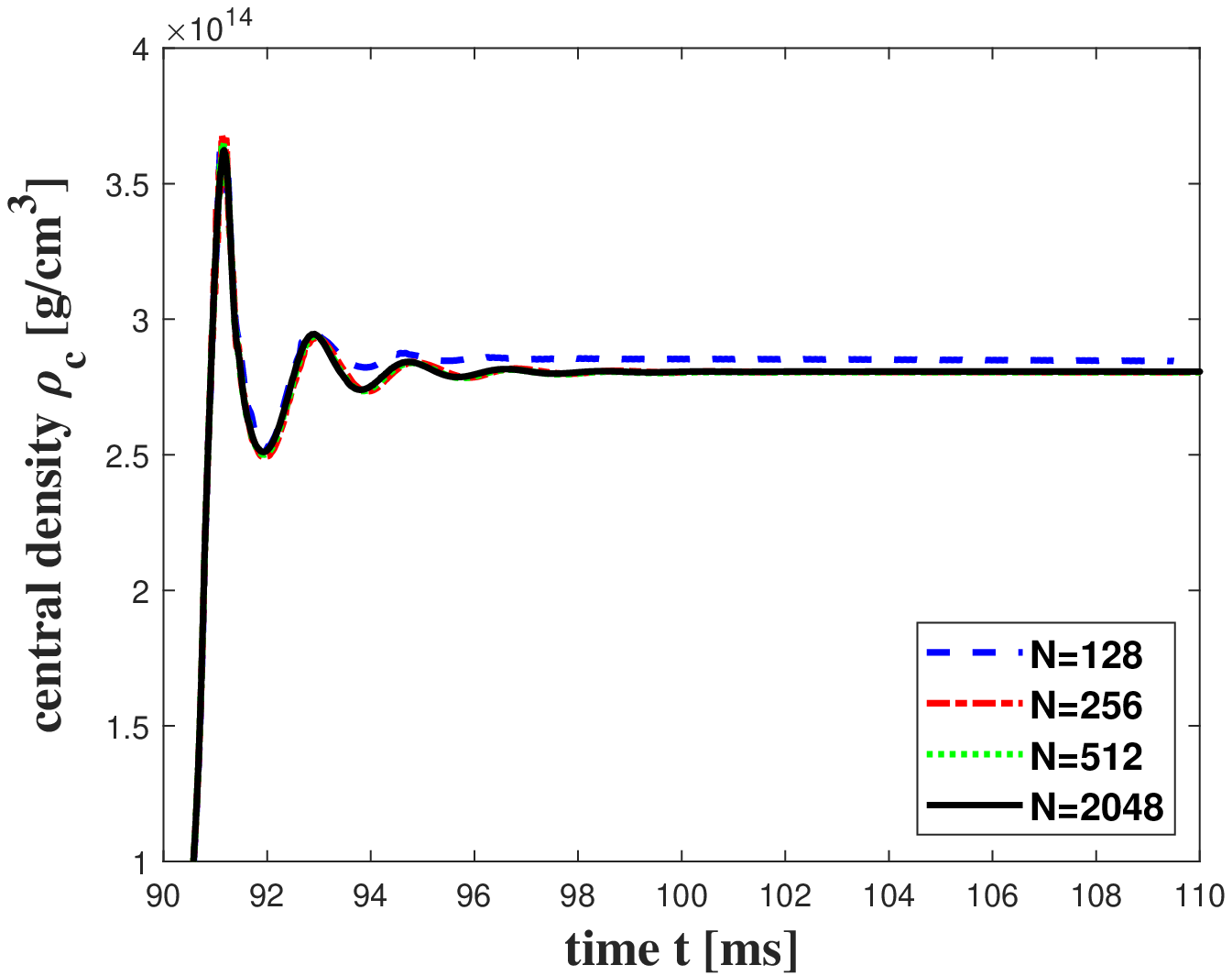}
	\caption{Example \ref{exam_toy}, central density as a function of time for the proposed (top two), the standard (mid two), and the standard with correction \eqref{eq:tvd2} (bottom two) DG schemes with $N$=128 (blue dashed), 256 (red dash-dotted), 512 (green dotted) and 2048 (black solid). The right figures represent zoomed-in versions for $t\in[90,110]$.}
	\label{fig:central_density}
\end{figure*}

\begin{figure*}
	\centering
	\includegraphics[width=0.48\linewidth]{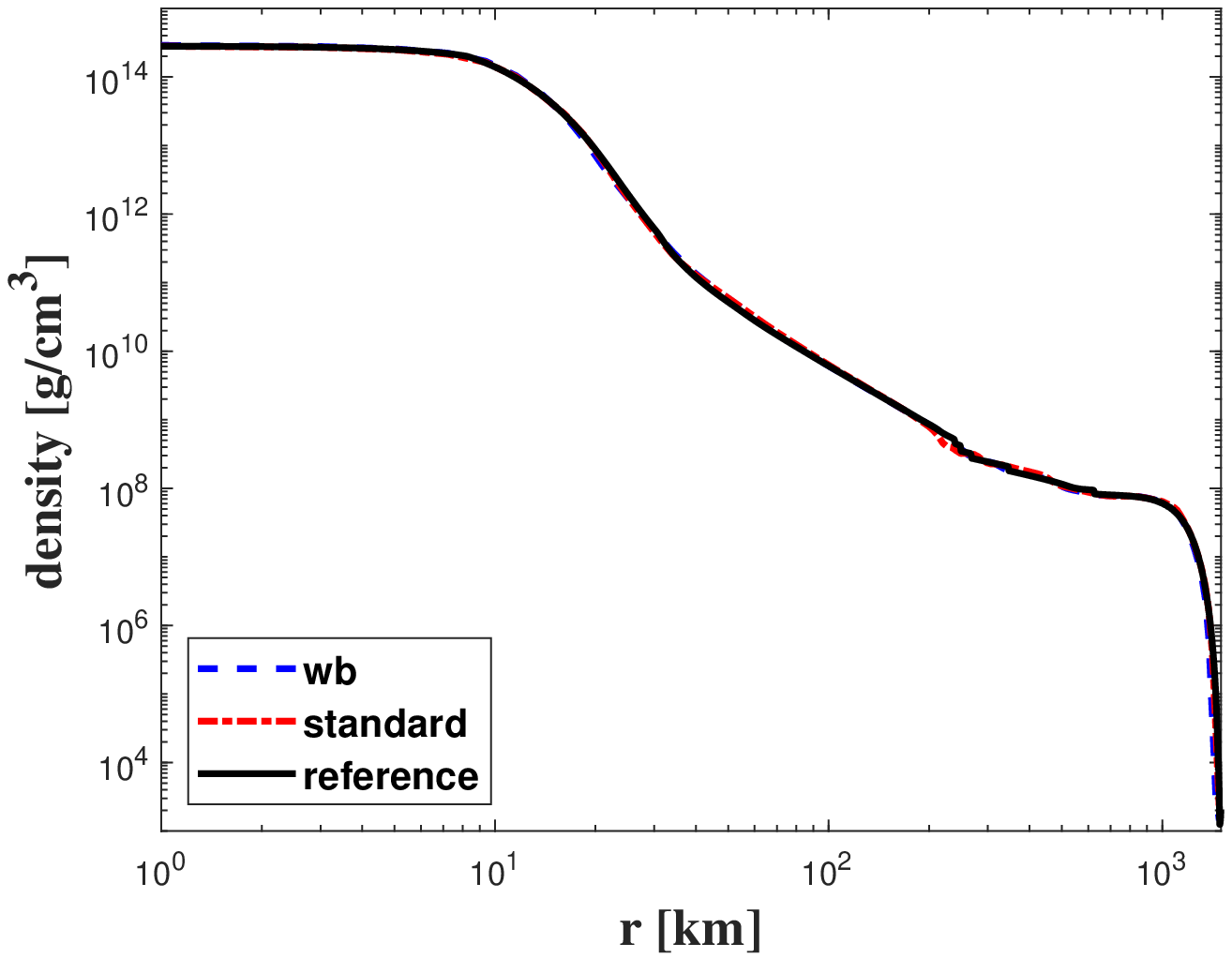}
	\includegraphics[width=0.48\linewidth]{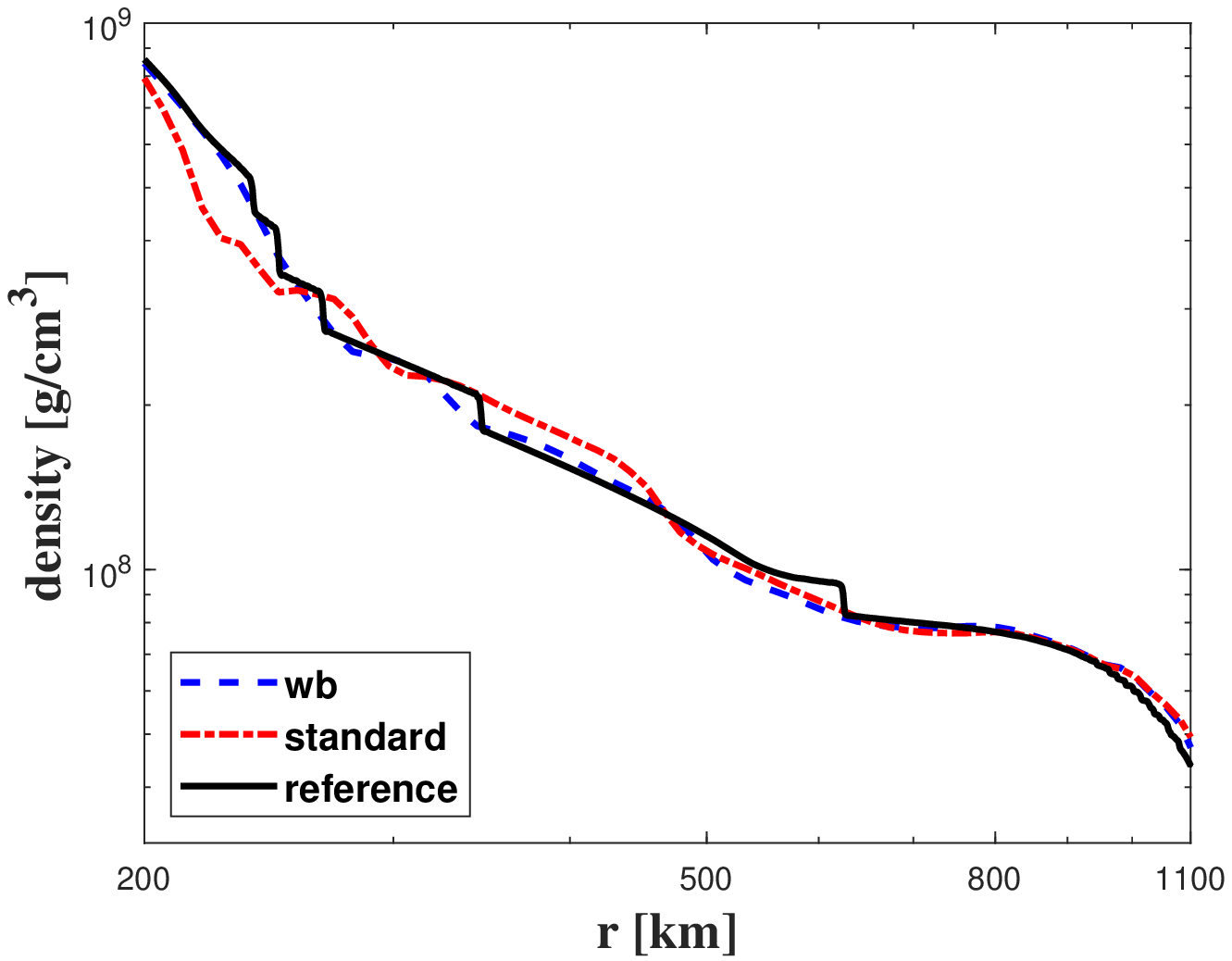}
	\includegraphics[width=0.48\linewidth]{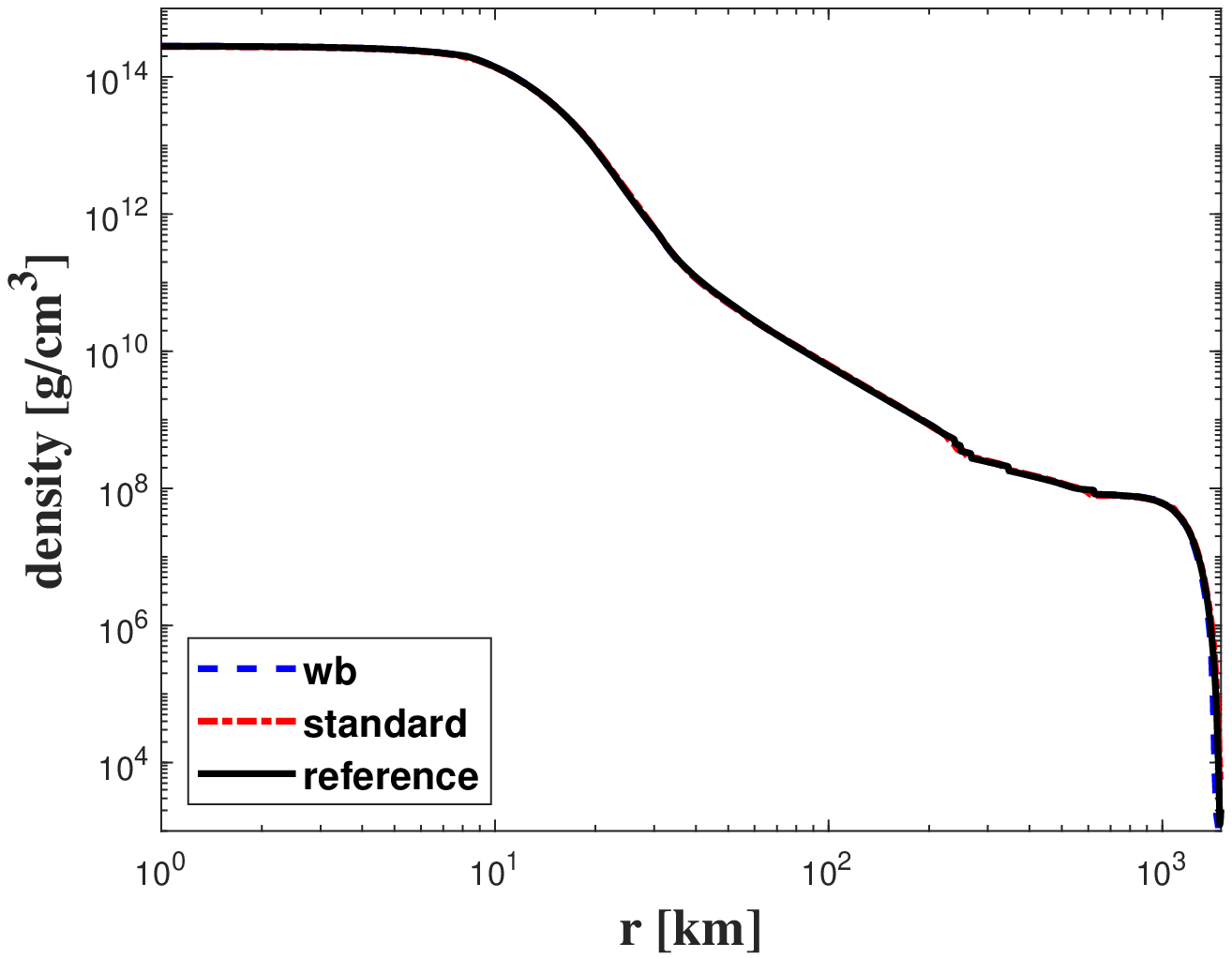}
	\includegraphics[width=0.48\linewidth]{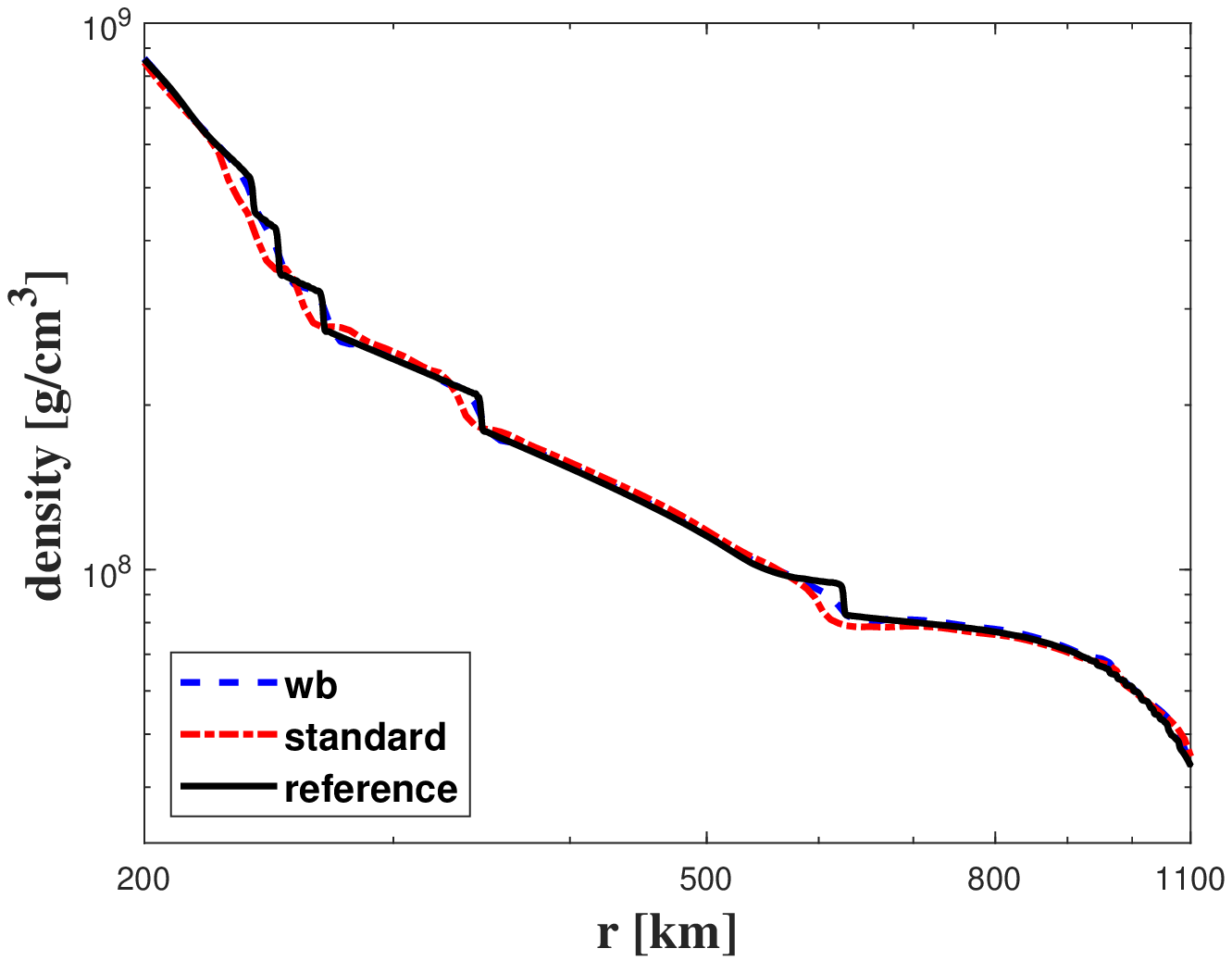}
	\caption{Example \ref{exam_toy}, the mass density versus radius at $t=0.11$ s of the proposed (blue dashed) and the standard DG scheme (red dash-dotted) with $N=128$ (top two), $256$ (bottom two) compared with a reference solution of $N=2048$ (black solid). The right two figures represent the zoom-in version at $r\in[200,1100]$~km.}
	\label{fig:final_density}
\end{figure*}

\begin{figure*}
	\centering
	\includegraphics[width=0.48\linewidth]{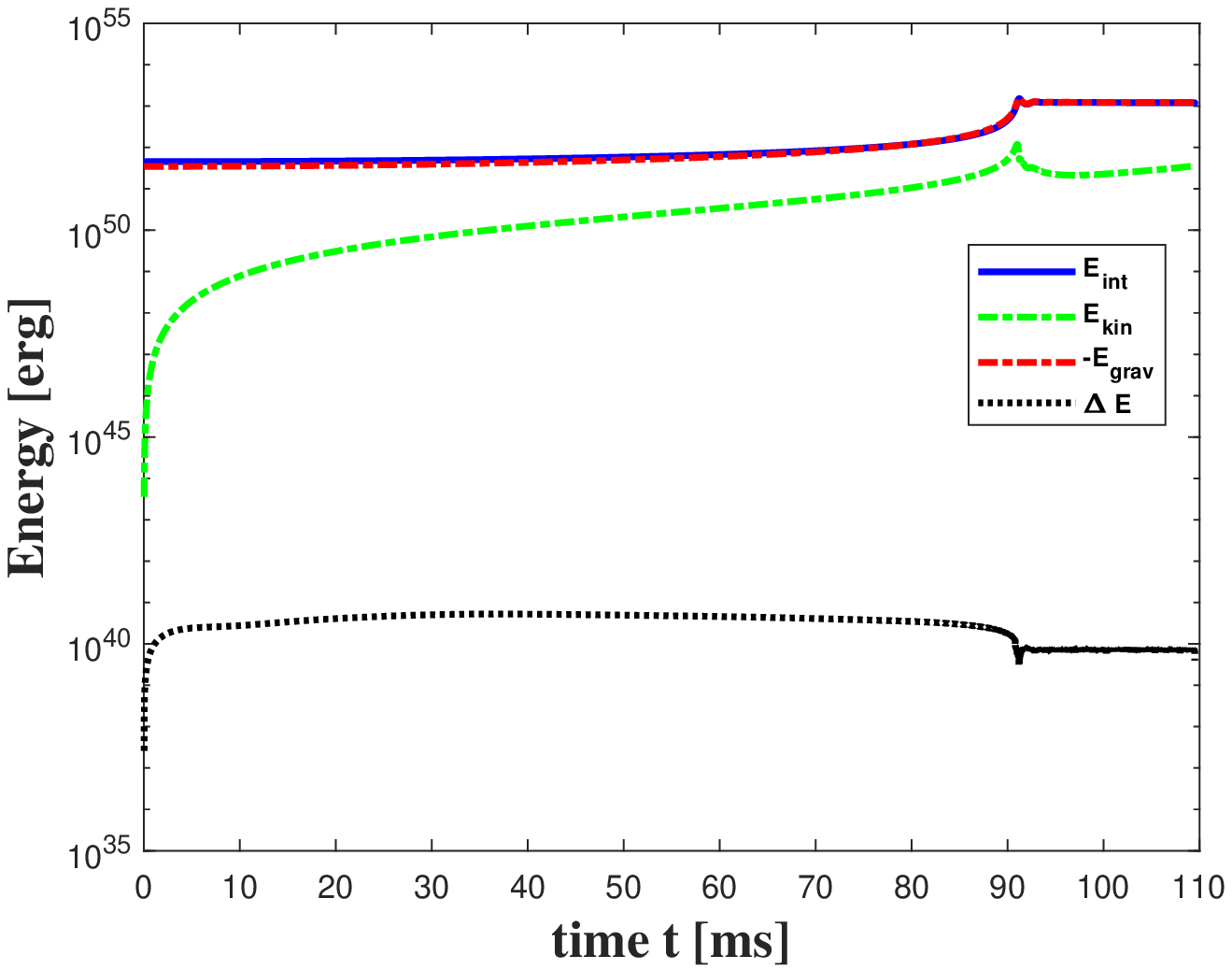}
	\includegraphics[width=0.48\linewidth]{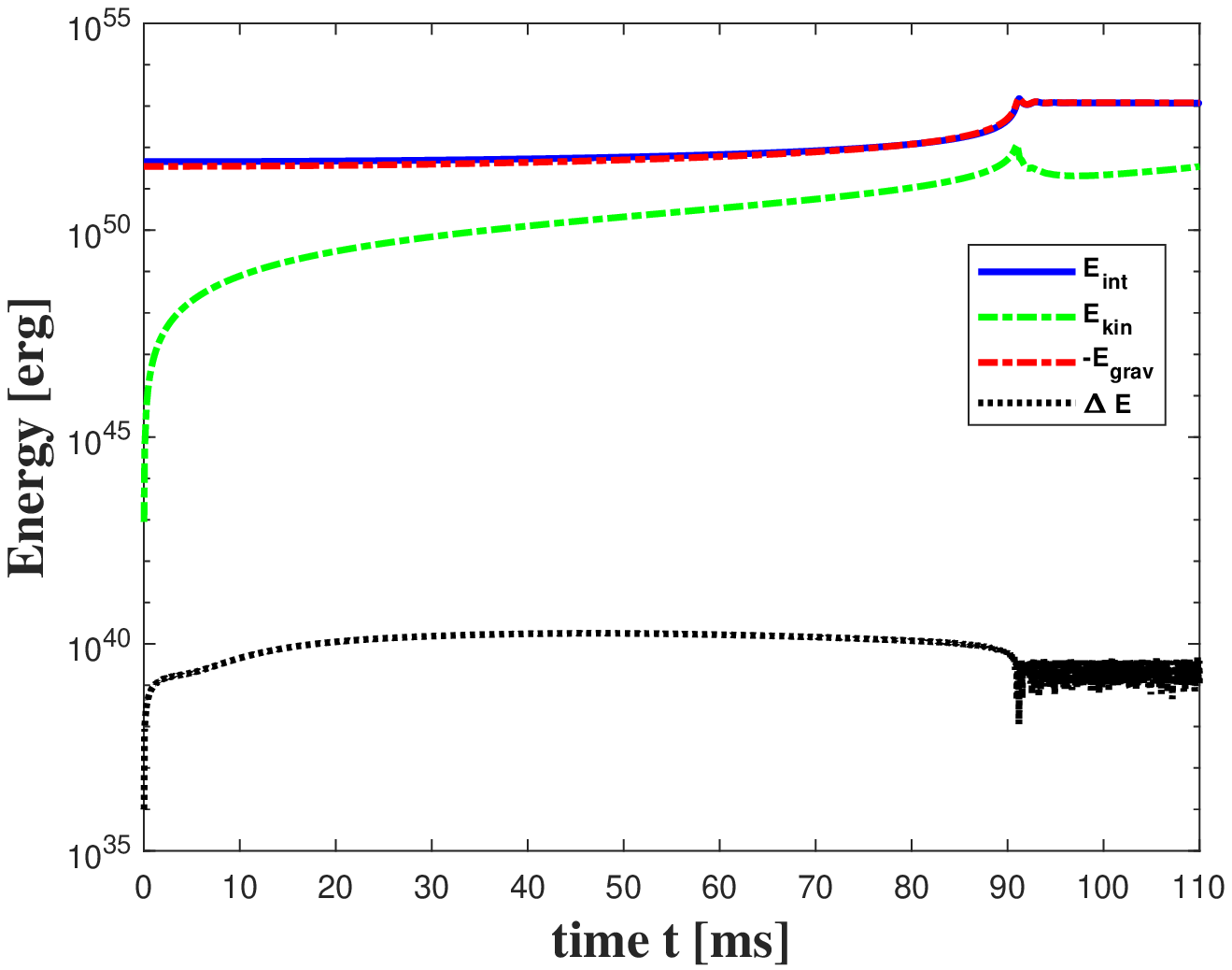}
	\includegraphics[width=0.48\linewidth]{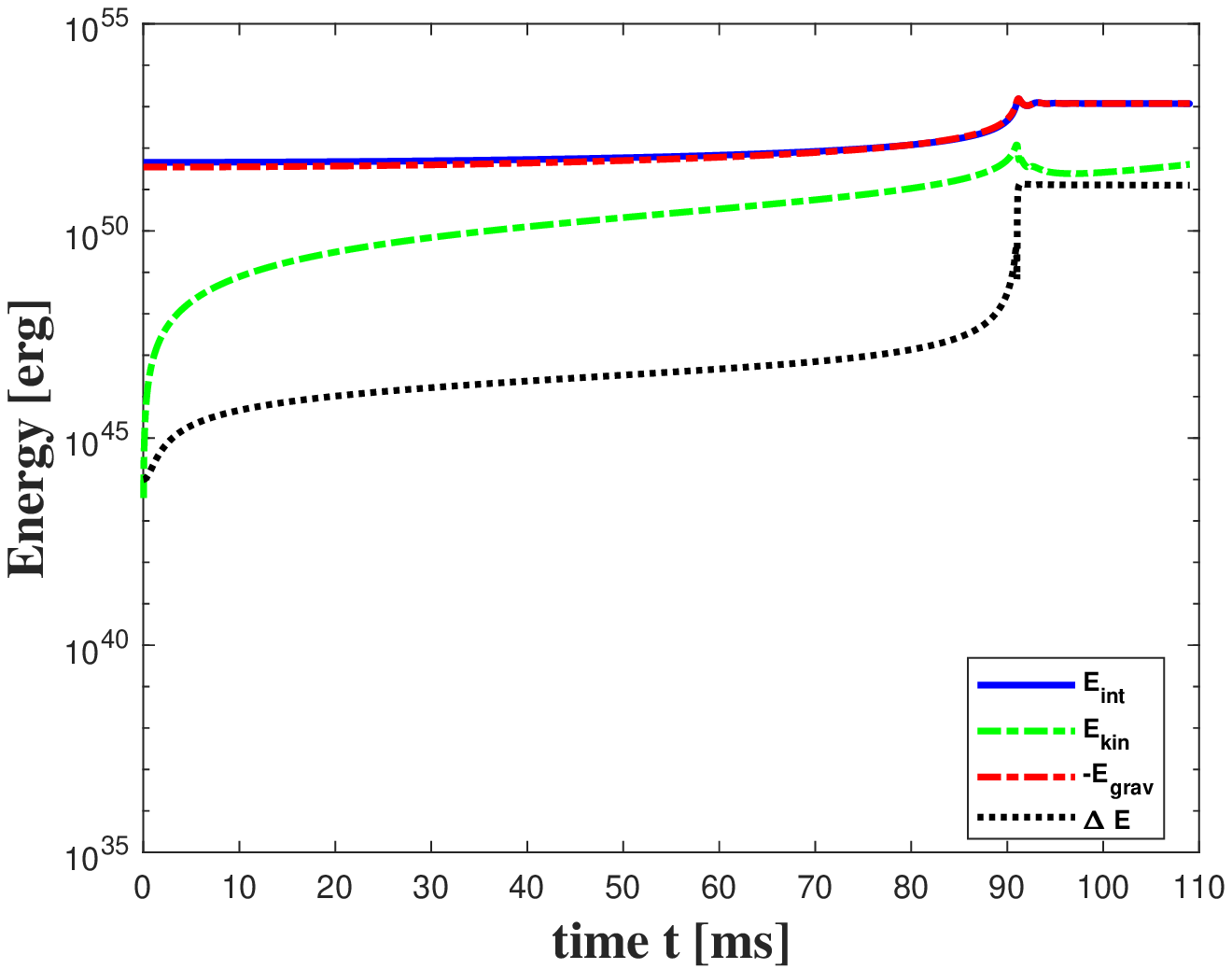}
	\includegraphics[width=0.48\linewidth]{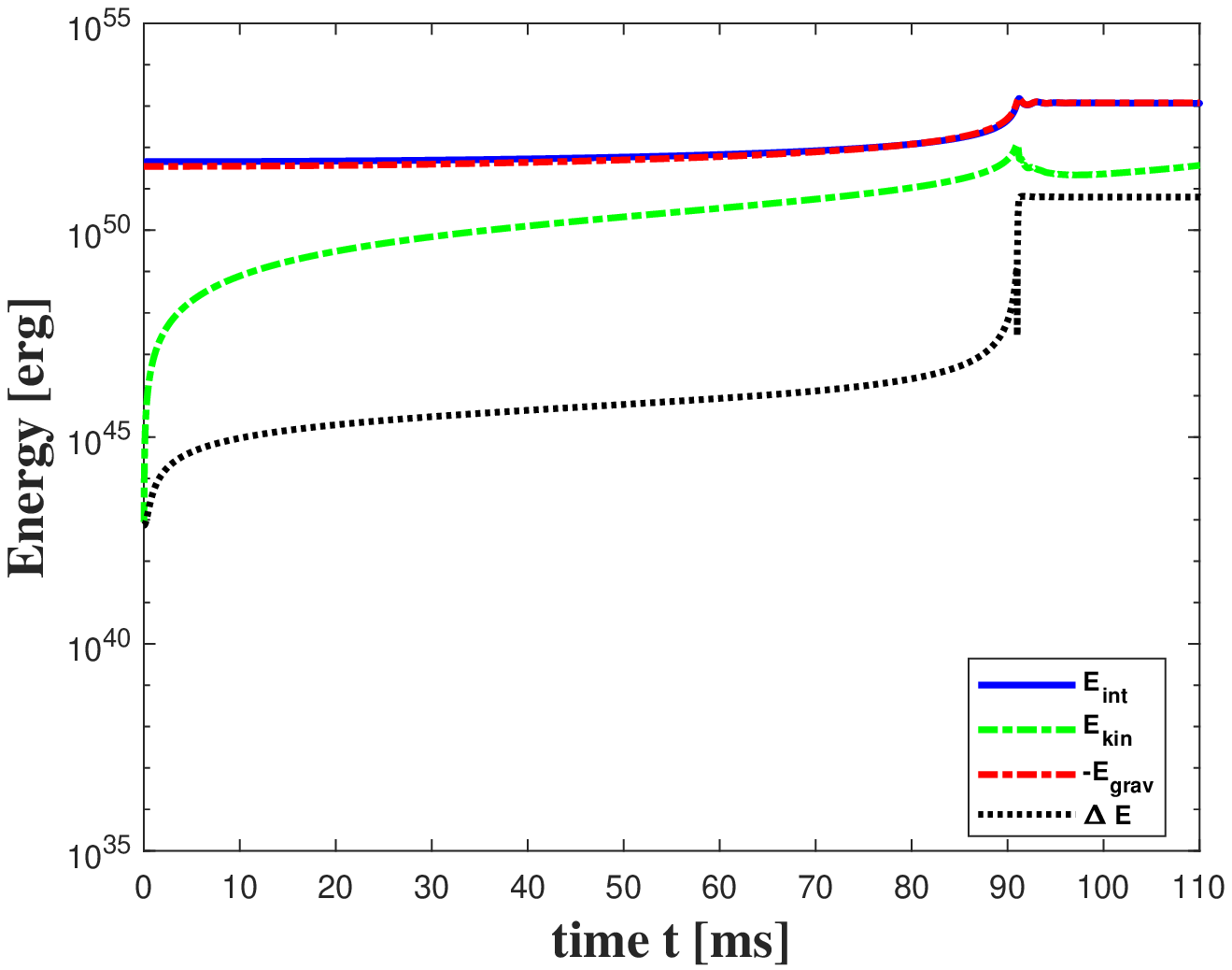}
	\includegraphics[width=0.48\linewidth]{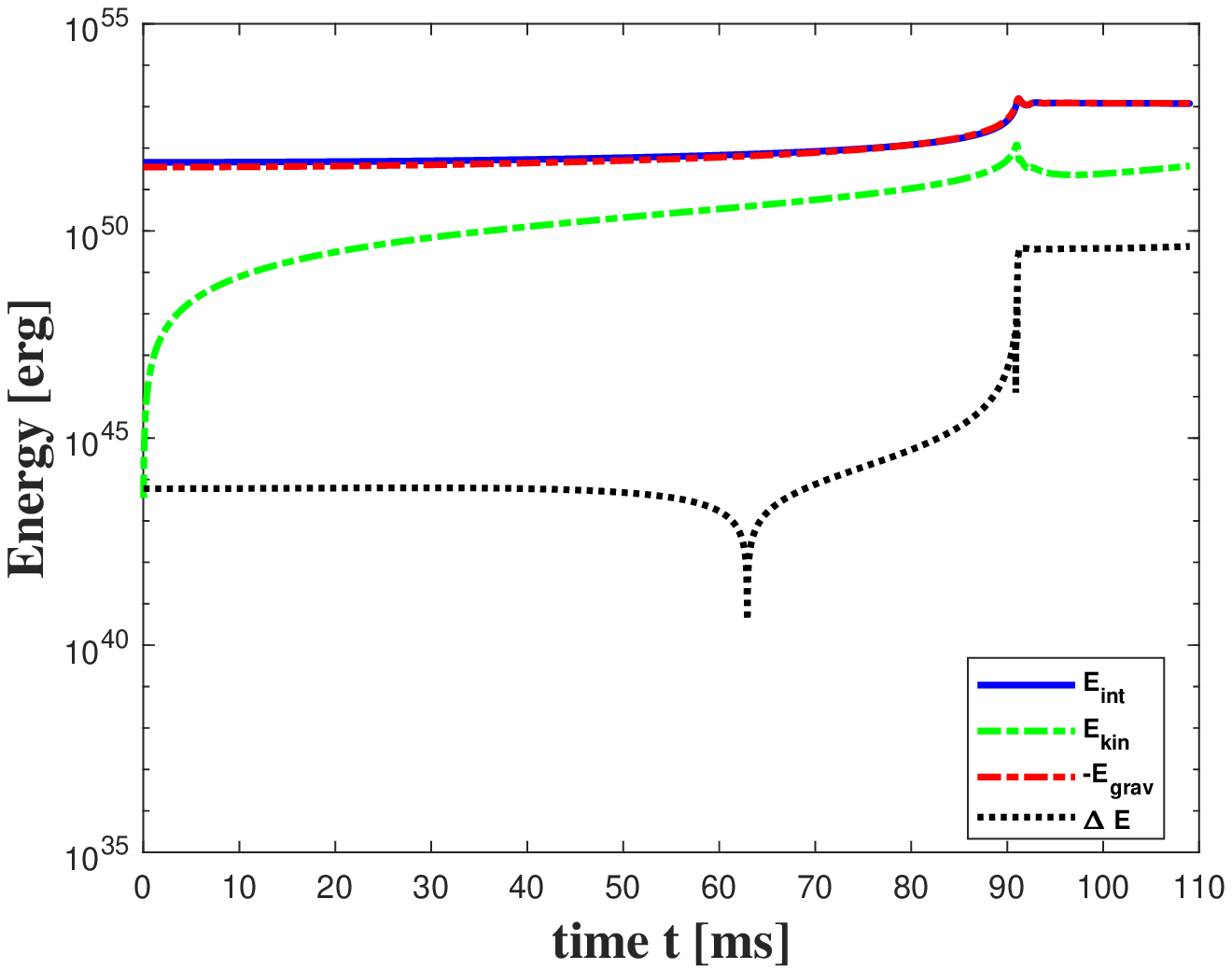}
	\includegraphics[width=0.48\linewidth]{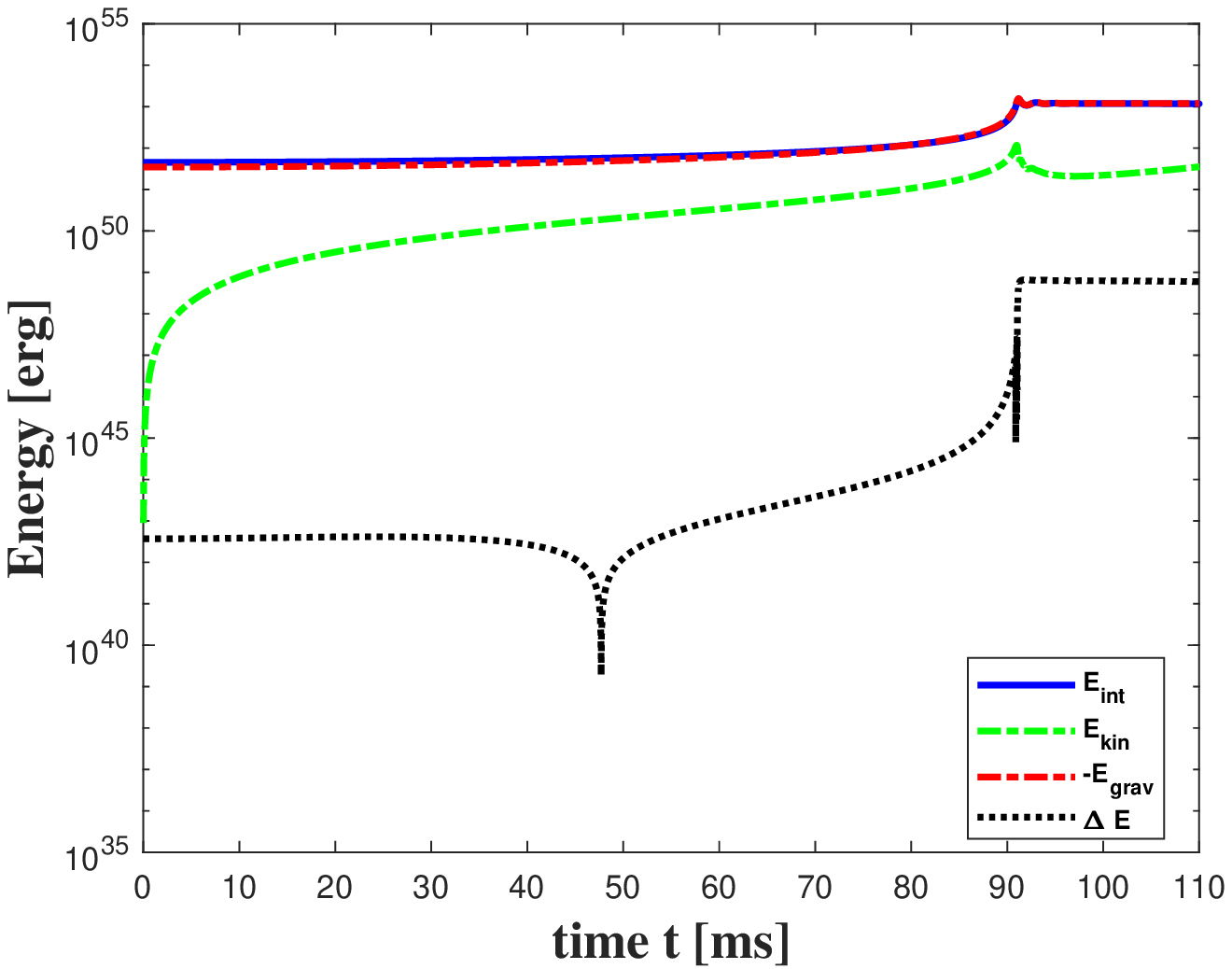}
	\caption{Example \ref{exam_toy}, the time history of the internal energy $E_{\rm int}$ (blue solid), kinetic energy $E_{\rm kin}$ (green dashed), negative gravitational energy $-E_{\rm grav}$ (red dash-dotted) and change in total energy $\Delta E$ (black dotted), with $N=128$ (left figures) and $N=256$ (right figures). We compared the solutions of our proposed scheme (in the top figures) and the standard DG scheme (in the mid figures) and standard DG scheme with correction term \eqref{eq:tvd2} (in the bottom figures).}
	\label{fig:energy}
\end{figure*}

\section{Summary and Conclusion}\label{sec:conclusion}

We have developed high-order, total-energy-conserving, and well-balanced discontinuous Galerkin (DG) methods for solving the Euler--Poisson equations in spherical symmetry. Our proposed scheme can preserve polytropic steady states and the total energy up to round-off errors. Key to these properties are the new way of recovering the steady states, the well-balanced numerical flux, the novel source term approximations (the well-balanced and total energy conserving parts), the total energy correction term for the limiter, and the newly defined time discretization. We have compared the performance of our proposed scheme with the standard scheme in several different situations, which all demonstrate the benefits of our proposed scheme. In all these examples, we can observe the round-off errors for the steady state solutions and total energy conservation, while the standard scheme can not.
In our opinion, the properties of our proposed scheme may be advantageous for simulating CCSNe in the context of non-relativistic, self-gravitating hydrodynamics. 

There are still challenges that remain to be solved in future works.  Importantly, CCSNe, and related systems where the methods developed here could be applicable, are inherently multidimensional due to, e.g., rotation, hydrodynamic instabilities, and magnetic fields \citep[][]{muller2020review}. The steady states considered in this work are valid only in spherical symmetry, and it will likely become much more complicated to generalize the well-balanced property to multiple spatial dimensions, which is the main reason we did not consider multidimensional methods in this paper. For extensions to multiple spatial dimensions, the main difficulty relates to how the desired steady states are characterized.  However, for problems that can be characterized as being nearly spherically symmetric (i.e., where the gravitational potential is dominated by the monopole component), such as CCSNe originating from slowly rotating stars, the methods developed here may potentially still be beneficial, but this remains to be investigated.  The extension of the energy conservation property to multiple spatial dimensions appears to be more straightforward, and will be considered in a future study.  Another topic to consider in a future work is the generalization of the well-balanced property to tabulated nuclear EoSs needed for more physically realistic models.  

\section*{Acknowledgements}
This work was carried out when W.~Zhang was visiting Department of Mathematics, The Ohio State University under the support of the China Scholarship Council (CSC NO. 201906340196).  
The work of Y.~Xing was partially supported by the NSF grant DMS-1753581. 
E.~Endeve acknowledges support from the NSF Gravitational Physics Theory Program (NSF PHY 1806692 and 2110177) and the Exascale Computing Project (17-SC-20-SC), a collaborative effort of the US Department of Energy Office of Science and the National Nuclear Security Administration.  

\section*{Data Availability Statements}
The data underlying this article will be shared on reasonable request to the corresponding author.

\bibliographystyle{mnras}
\bibliography{euler-self-gravity}


	\bsp	
	\label{lastpage}
\end{document}